\newtheorem{theorem}{Theorem}[section]
\newtheorem{proposition}[theorem]{Proposition}
\newtheorem{corollary}[theorem]{Corollary}
\newtheorem{lemma}[theorem]{Lemma}
\theoremstyle{definition}
\newtheorem{definition}[theorem]{Definition}
\newtheorem{example}[theorem]{Example}
\theoremstyle{remark}
\newtheorem{remark}[theorem]{Remark}
\numberwithin{equation}{section}
\newcommand{\Projection}{\ensuremath{\mathsf{p}}}
\newcommand{\FNo}{\ensuremath{\mathcal{F}_{\newN}^{0}}} 
\newcommand{\FN}{\ensuremath{\mathcal{F}_{\newN}}} 
\newcommand{\dd}{\mathrm{d}}
\newcommand{\X}{\ensuremath{X_{0}}}
\newcommand{\tub}{\ensuremath{\mathrm{Tub} }}
\newcommand{\F}{\ensuremath{\mathcal{F}}}
\newcommand{\codim}{\ensuremath{\mathrm{codim}}}
\newcommand{\Sl}{\ensuremath{\mathcal{S}}}
\newcommand{\dank}{\textsf{Acknowledgments.\ }} 
\newcommand{\Iso}{\ensuremath{\mathrm{Iso}}}
\newcommand{\RR}{\mathbb R}
\newcommand{\N}{\mathbb N}
\newcommand{\metric}{\ensuremath{ \mathrm{g} }}
\newcommand{\minimalstratum}{\ensuremath{ \Sigma }}
\def\In{\subseteq}
\def\RR{\mathbb{R}}
\def\mc{\mathcal}
\def\H{\mc{H}}
\def\fol{\mc{F}}
\def\e{\varepsilon}
\newcommand{\satsub}{\ensuremath{ Y_{\F} }}
\newcommand{\newN}{\ensuremath{ \textsf{N}  }}
\newcommand{\FNNo}{\ensuremath{\mathcal{F}_{\newN'}^{0}}} 
\def\ddt{\frac{d}{dt}}
\def\ddtn{\frac{d^n} {dt^n}}
\newtheorem{conjecture}[theorem]{Conjecture}
\begin{document}


\title[Isometric flows on orbit spaces and Molino's conjecture]{Smoothness of isometric flows on orbit spaces  and applications to the theory of foliations}




\author{Marcos M. Alexandrino}

\author{Marco Radeschi}


\address{Marcos M. Alexandrino \hfill\break\indent 
Universidade de S\~{a}o Paulo\\
Instituto de Matem\'{a}tica e Estat\'{\i}stica, \hfill\break\indent
 Rua do Mat\~{a}o 1010,05508 090 S\~{a}o Paulo, Brazil}
\email{marcosmalex@yahoo.de, malex@ime.usp.br}

\address{ Marco Radeschi\hfill\break\indent 
Mathematisches Institut\\
 WWU M\"unster, Einsteinstr. 62, M\"unster, Germany.}
\email{mrade\_02@uni-muenster.de}

\thanks{The first author was  supported by CNPq and partially supported by FAPESP. The second author was partially supported by Benjamin Franklin Fellowship}

\subjclass[2000]{Primary 53C12, Secondary 57R30}

\keywords{Singular Riemannian foliations, Myers-Steenrod theorem, isometric flow on orbit spaces, Molino's conjecture}

\begin{abstract}

We prove here that 
given a proper isometric action $K\times M\to M$ on a complete Riemannian manifold $M$ then 
every continuous isometric flow on  the orbit space $M/K$   is smooth, i.e., it is the projection of an $K$-equivariant smooth flow  on the manifold $M$. 
As a direct corollary we  infer the smoothness of isometric actions on orbit spaces.
Another relevant application of our result concerns Molino's conjecture, which states that
 the  partition of a Riemannian manifold into the closures of   the leaves of a singular Riemannian foliation is still a singular Riemannian foliation.
 We prove Molino's conjecture for the main class of foliations considered in his book, namely orbit-like foliations.

\end{abstract}



\maketitle

\setcounter{tocdepth}{1}
\tableofcontents

%
%

\section{Introduction}

Given a Riemannian manifold $M$ on which a compact Lie group $K$ acts by isometries, the quotient $M/K$ is in general not a manifold. Nevertheless, the canonical projection $\pi:M\to M/K$ gives $M/K$ the structure of a Hausdorff metric space. Moreover, following \cite{Schwarz}, one can define a ``smooth structure'' on $M/K$ to be the $\mathbb{R}$-algebra $C^{\infty}(M/K)$ consisting of functions $f:M/K\to \mathbb{R}$ whose pullback $\pi^*f$ is a smooth, $K$-invariant function on  $M$. If $M/K$ is a manifold, the smooth structure defined here corresponds to the more familiar notion of smooth structure.
A map $F:M/K\to M'/K'$ is called \emph{smooth} if the pull-back of a smooth function $f\in C^{\infty}(M'/K')$ is a smooth function $F^{*}f$ on $M/K$.

These concepts can actually be formulated in the wider context of singular Riemannian foliations (SRF for short). 
A singular foliation $\F$ is called \emph{Riemannian} if every geodesic perpendicular to one leaf is perpendicular to every leaf it meets. 
 The decomposition of a Riemannian manifold into the orbits of some isometric action is a special example of a singular Riemannian foliation, that is called  \emph{Riemannian homogeneous foliation}. Given a singular Riemannian foliation $(M,\F)$ with compact leaves,  one can define a quotient $M/\F$ and again endow it with a metric structure and a smooth structure, exactly as for group actions.

 In \cite[Corollary 2.4]{Schwarz}  Schwarz proved that given a proper  action $K\times M\to M$ each smooth flows on the orbit space $M/K$ is a 
 projection of an $K$-equivariant smooth flow  on the manifold $M$, and hence solved Bredon's \emph{Isotopy Lift Conjecture}; see details in 
 \cite{Schwarz}.

Our main result concerns with smoothness of continous flow of isometries (i.e., continuous 1-parameter groups of isometries) on orbit spaces.

\begin{theorem}
\label{main-theorem}
Let $M$ be a complete Riemannian manifold and $K \times M \to M$ a proper isometric action. 
Let 
$$
\varphi:M/K\times I\to M/K
$$
be a continuous flow of isometries on the orbit space.  Then $\varphi$ is smooth, and hence it is the projection of an $K$-equivariant flow on $M$.
\end{theorem}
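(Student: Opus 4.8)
The plan is to reduce everything to \cite[Corollary 2.4]{Schwarz}: that result already says that a \emph{smooth} flow on $M/K$ is the projection of a $K$-equivariant smooth flow on $M$, so the whole content of the theorem is to upgrade the continuity of $\varphi$ to smoothness in the sense of $C^{\infty}(M/K)=C^{\infty}(M)^{K}$. I would do this by manufacturing an explicit smooth $K$-invariant vector field on $M$ whose flow descends to $\varphi$.

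First, for each fixed $t$ the map $\varphi_{t}$ is an isometry of the metric space $M/K$, hence it preserves the canonical stratification of $M/K$ by orbit type, since the strata are intrinsic metric invariants (for instance they are detected by the iterated spaces of directions, or by the local isometry type of small metric balls). In particular $\varphi$ restricts to a continuous flow of isometries of the open dense regular stratum $N:=M_{\mathrm{reg}}/K$, which carries a genuine Riemannian metric making $\pi\colon M_{\mathrm{reg}}\to N$ a Riemannian submersion; each $\varphi_{t}|_{N}$ preserves the length structure of $N$, so it is a Riemannian isometry. By the Myers--Steenrod and Montgomery--Zippin theorems (continuous one-parameter subgroups of the isometry Lie group of a Riemannian manifold are smooth flows), $\varphi|_{N}$ is smooth and is the flow of a Killing vector field $\xi$ on $N$. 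Using the horizontal distribution of $\pi$ I take the horizontal lift $\overline{\xi}$ of $\xi$: this is a smooth $K$-invariant vector field on $M_{\mathrm{reg}}$ whose (foliated) flow projects to $\varphi|_{N}$.

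The heart of the argument is the claim that $\overline{\xi}$ extends to a smooth $K$-invariant vector field $X$ on all of $M$. This is a local statement near an arbitrary leaf $L_{p}$, and I would prove it by induction on the depth of the stratification (equivalently, on $\dim M$). By the slice theorem a $K$-invariant neighbourhood of $L_{p}$ is $K$-equivariantly diffeomorphic to $K\times_{K_{p}}S_{p}$, where $K_{p}$ is the compact isotropy group and $S_{p}$ a ball in $\nu_{p}L_{p}$ with the linear slice representation; correspondingly a neighbourhood of $\pi(p)$ in $M/K$ is the metric cone $S_{p}/K_{p}$ over the strictly lower-dimensional orbit space $\Sigma_{p}:=S(\nu_{p}L_{p})/K_{p}$. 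Since $\varphi_{t}$ is an isometry preserving the minimal stratum through $\pi(p)$ and the radial (distance-to-$L_{p}$) function, the behaviour of $\varphi$ in the normal directions near $\pi(p)$ is controlled by a continuous flow of isometries of the link $\Sigma_{p}$, which is smooth by the inductive hypothesis; by \cite[Corollary 2.4]{Schwarz} applied to the compact linear action of $K_{p}$ on the slice this flow lifts to a smooth $K_{p}$-equivariant flow on $S_{p}$, and its generator, combined with the smooth flow of $\xi$ along the (manifold) stratum, produces a smooth $K_{p}$-equivariant extension of $\overline{\xi}$ over $K\times_{K_{p}}S_{p}$. These local extensions agree with $\overline{\xi}$, hence with one another, on the dense regular set, so they patch to a global smooth $K$-invariant field $X$ on $M$.

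Finally $X$ is complete (a standard escape-to-infinity argument using completeness of $M$ together with the fact that the projected flow $\varphi$ is defined on all of $I$), so its flow is a smooth $K$-equivariant flow $\Phi$ on $M$; it descends to a smooth flow $\overline{\varphi}$ on $M/K$ which on the dense stratum $N$ has the same generator $\xi$ as $\varphi$, hence agrees with $\varphi$ there and therefore on all of $M/K$ by continuity. Thus $\varphi=\overline{\varphi}$ is smooth and $\Phi$ is the desired equivariant lift. The decisive obstacle is the extension step of the third paragraph: controlling the normal/cone directions of an isometric flow uniformly near a singular stratum, setting up the induction on depth so that \cite[Corollary 2.4]{Schwarz} can be fed in at each level, and verifying that the fields produced slice by slice really do glue to a single smooth field on $M$.
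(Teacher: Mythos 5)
There is a genuine gap, and it sits exactly where you yourself flag the ``decisive obstacle'': the extension step in your third paragraph. Your inductive scheme presupposes that, near a singular orbit $K\cdot p$, the flow decomposes into a motion along the stratum plus a continuous one-parameter group of isometries of the fixed link $\Sigma_p=S(\nu_pL_p)/K_p$, to which the inductive hypothesis and \cite[Corollary 2.4]{Schwarz} can be applied. But $\varphi_t$ does not fix $\pi(p)$: it carries the slice quotient at $\pi(p)$ to the slice quotient at $\varphi_t(\pi(p))$, so what one gets a priori is only a $t$-family of isometries between \emph{different} fibers. Turning this into an honest flow on a fixed slice requires a coherent identification of the slices along the integral curve (compatible with the metric and with the group law), and even once this is done, the inductive hypothesis and Schwarz's lifting only give, for each fixed $t$, a smooth map of the slice; they do not give joint smoothness in the slice variable and in $t$, nor do they show that the field you assemble from ``the generator of the link flow combined with $\xi$'' actually projects to the given $\varphi$ on the regular part (which is what your patching argument needs). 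Schwarz's Corollary 2.4 cannot supply the missing time-regularity, since at that stage of your induction the object to be lifted is not yet known to be a smooth flow.

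This joint regularity in $(x,t)$ near the singular stratum is precisely what the paper's proof is organized around, and it is obtained by analysis rather than by a slice-theorem induction: one passes to a local reduction $\newN$ along an integral curve $Y$ through the minimal stratum (where the isotropy is shown to be constant, Lemma \ref{lemma-thegroupG}), equips the fibers with a flat metric (Proposition \ref{lemma-submersion-N}) so that $u_h(\cdot,t)=\varphi_t^{*}h$ satisfies a weak parabolic equation \eqref{eq-weak-parabolic-solution} (using that quotient isometries preserve the projected mean curvature and the quotient Laplacian, \cite{AlexRadeschi}), verifies the compatibility/initial regularity via blow-up functions and an induction on the \emph{number of strata} through the desingularization (Propositions \ref{lemma-lifted-flow-isometry}, \ref{lemma-tecnhical}, \ref{B-are-X-closed}), and then bootstraps with parabolic regularity (Theorem \ref{regularity-parabolic-equation}) to get smoothness on a fiber, extending to $\newN/\FN\times I$ by the inverse function theorem on orbit spaces; only after that is \cite[Corollary 2.4]{Schwarz} invoked to produce the equivariant lift. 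Your first two paragraphs (stratum preservation, Myers--Steenrod on the regular part, horizontal lift) are fine and consistent with the paper, but the core of the theorem — upgrading fiberwise/regular-part regularity to smoothness across the singular strata jointly in time — is not established by the argument you sketch.
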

\begin{remark}
Along the proof of the theorem, we will only assume that the action is a proper action and that the orbits of the action are leaves 
of a SRF $\F$ on  $M$. This mild generalization includes the case of proper isometric actions and is more suitable for the proof the theorem.  
\end{remark}

 The above result  implies the next corollary; see details in Section \ref{section-corollary}.

\begin{corollary}
\label{corollary}
Let $K\times M\to M$ be a proper isometric action on a complete Riemannian manifold $M$. Let $H$ be a  connected  Lie group acting by isometries on $M/K$. 
  Then  the action $H\times (M/K)\to M/K$ is smooth.
\end{corollary} 
\begin{remark}
 Recently, there have been some papers devoted to the study of isometries of singular spaces. 
In  \cite{ColdingNaber} Colding and  Naber proved that   isometry group of any, even collapsed, limit of manifolds with a uniform lower Ricci curvature bound is a Lie group.  In Proposition \ref{liegroupstructure} we will briefly discuss the particular case of group of isometries of a leaf space, proving that
 each connected compact  group of isometries of $M/\F$ is a Lie group.
In \cite{garciaGuijarro} Galaz-Garcia and  Guijarro   determined the maximal dimension of isometry group of Alexandrov spaces.  
In \cite{GorodskiLytchak} Gorodski and Lytchak investigated classes of orthogonal representations of compact Lie groups that 
have isometric orbit spaces.
Finally in \cite{AlexRadeschi}, we proved that if $\F$ is a closed SRF on $M$, then each isometry in the
identity component of the isometry group of $M/\F$ is a smooth map.

\end{remark}

Flows of isometries on the leaf spaces of  foliations appear naturally in the study of the dynamical behavior of \emph{non closed} singular Riemannian foliations. Recall that a (locally closed) singular Riemannian foliation $(M,\F)$ is locally described by  submetries $\pi_{\alpha}:U_{\alpha}\to U_{\alpha}/\F_{\alpha}$, where $\{U_{\alpha}\}$ is an open cover of $M$ and $\F_{\alpha}$ denotes the restriction of $\F$ to $U_{\alpha}$. If a leaf $L$ is not closed, one might be interested to understand how it intersects a given neighborhood $U_{\alpha}$, and in particular how the closure $\overline{L}$ of $L$ intersects $U_{\alpha}$. 
It turns out  that the projection $\pi_{\alpha}(\overline{L}\cap U_{\alpha})$ (that is contained in the local quotient of a stratum) is a submanifold, which is spanned by continuous flows of isometries $\varphi_{\alpha}$ on $U_{\alpha}/\F_{\alpha}$, cf. \cite[Thm 5.2]{Molino}. Therefore, in order to better understand the closure of $\overline{L}$, it would be relevant to understand if these flows admit  smooth lifts.

The above discussion already suggests  that Theorem \ref{main-theorem} should be a useful tool in the study of dynamical behavior of singular Riemannians foliations and should help to solve Molino's conjecture in important cases.

\begin{conjecture}[Molino]
Let $(M,\F)$ be a singular Riemannian foliation. Then the partition $\overline{\F}$ given by the closures of the leaves of $\F$ is again a singular Riemannian foliation.
\end{conjecture}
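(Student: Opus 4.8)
The plan is to localize the problem and isolate \emph{smoothness} as its only genuinely hard part, then feed that to Theorem \ref{main-theorem}. Since being a singular Riemannian foliation is a local condition (it is detected by local submetries), it suffices to show that for every $p \in M$ there is an $\F$-saturated open neighbourhood $U$ of $p$ on which the sets $\overline{L}\cap U$ are the plaques of a smooth singular foliation, i.e.\ the orbits of a locally finitely generated module of smooth vector fields on $U$. The transverse-metric condition defining ``Riemannian'' should then follow readily: the leaf closures are described by \emph{isometric} flows on a local quotient (see below), so the relevant local quotient maps remain submetries, and since each $\overline{L}$ is $\F$-saturated, a geodesic meeting $\overline{L}$ orthogonally at a point $x$ also meets the $\F$-leaf through $x$ orthogonally, whence it is perpendicular to every $\F$-leaf it meets and hence to every leaf closure it meets. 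Thus the entire difficulty of the conjecture is the smoothness of $\overline{\F}$, and that is exactly what Theorem \ref{main-theorem} is meant to supply.

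Fix a local submetry $\pi\colon U \to B := U/\F|_U$ describing $\F$ near $p$, so that $\overline{\F}|_U$ is the partition of $U$ by the $\pi$-preimages of the sets $\pi(\overline{L}\cap U)$. By \cite[Thm 5.2]{Molino}, each set $\pi(\overline{L}\cap U)$ lies inside the local quotient of the stratum through $\pi(L)$ and is a submanifold \emph{spanned by a finite family of continuous flows of isometries} $\varphi_1,\dots,\varphi_k$ of $B$ (restricted to that stratum). Hence, locally, $\overline{\F}$ is the foliation generated by $\F$ together with the $\pi$-lifts of these flows, and it suffices to prove that each $\varphi_j$ is \emph{smooth} in the sense of the introduction and admits a leaf-preserving smooth lift to $U$: then the generators of $\F$ together with the generators of these lifted flows form the required locally finitely generated module of smooth vector fields, whose orbit partition is exactly $\overline{\F}|_U$.

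This is precisely the conclusion Theorem \ref{main-theorem} delivers — \emph{provided} the local quotient $B$ (more precisely, its restriction to the relevant stratum) is the orbit space $M'/K'$ of a proper isometric action. In that case Theorem \ref{main-theorem} yields both smoothness of each $\varphi_j$ and a $K'$-equivariant smooth lift, which pulls back under $\pi$ to the desired $\F$-foliated smooth flow on $U$, finishing the argument. I expect the hard part to be exactly the hypothesis of this last step: for a general singular Riemannian foliation the local quotient $U/\F|_U$ is \emph{not} the orbit space of any group action, because the infinitesimal foliation transverse to a leaf need not be homogeneous. So the conjecture in full generality rests on upgrading Theorem \ref{main-theorem} from orbit spaces to arbitrary leaf spaces of singular Riemannian foliations: every continuous flow of isometries of $M/\F$ should be smooth for the algebra $C^{\infty}(M/\F)$ of basic functions and lift to an $\F$-foliated smooth flow on $M$.

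Two approaches to that upgrade seem plausible. One is to re-run the proof of Theorem \ref{main-theorem} with the algebra of $K$-invariant functions replaced by the algebra of $\F$-basic functions and the slice representation at an orbit point replaced by the infinitesimal foliation on a normal slice; the Myers–Steenrod-type estimates behind Theorem \ref{main-theorem} should have analogues, the delicate points being the absence of linearizability of the infinitesimal foliation and of an ambient Lie group. The other is to pass to a desingularization: a manifold $\widehat{M}$ carrying a \emph{regular} Riemannian foliation $\widehat{\F}$ together with a foliated map $\widehat{M} \to M$ inducing a map of leaf spaces, pull the isometric flow back to $\widehat{M}/\widehat{\F}$ where Molino's structure theorem for regular Riemannian foliations is available, produce a smooth foliated flow there, and descend it. Either way, the essential obstacle is leaving the homogeneous setting; this is exactly why the argument goes through unconditionally only for orbit-like foliations, whose infinitesimal foliations are homogeneous and whose local quotients (after restricting to a slice) are genuine orbit spaces, so that Theorem \ref{main-theorem} applies directly.
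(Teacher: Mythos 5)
First, note that the paper itself does not prove this statement: it is recorded as a conjecture, and only the orbit-like case is established (Theorem \ref{theorem-molinos-conjecture-orbitlike}, proved in Section \ref{sec-proof-molino}). Your proposal is honest about exactly this, and for the case that is actually proved your outline coincides with the paper's strategy: reduce the problem to smoothness of $\overline{\F}$ (transnormality being due to Molino/equifocality), pass to a local transversal where the foliation is homogeneous, realize the tangent directions of $\overline{L}$ by continuous isometric flows on the quotient, apply Theorem \ref{main-theorem} to get smoothness, and lift via Schwarz \cite{Schwarz}. Your diagnosis of the obstruction in the general case (the local quotient of a general SRF is not an orbit space, so Theorem \ref{main-theorem} does not apply as stated) is also the paper's implicit reason for restricting to orbit-like foliations.

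Where your sketch is thinner than the paper's actual argument for the orbit-like case is the step ``the leaf closures are spanned by continuous isometric flows, so apply Theorem \ref{main-theorem}''. The paper does not work with $U/\F|_U$ directly: it takes the local reduction $(\newN,\FNo)$ along a slice $D$ of $(\Sigma,\F|_\Sigma)$, where orbit-likeness makes $\FNo$ the orbit foliation of a group acting smoothly (not isometrically — which is all Theorem \ref{main-theorem} requires), and it encodes the recurrence of the leaves by the singular holonomy pseudogroup $\H$ of Definition \ref{singular-holonomy-pseudogroup}. To produce, for a \emph{prescribed} vector $v$ tangent to $\overline{L_q}$, a one-parameter family $\varphi_t$ in the closure of $\H$ whose orbit has ${\pi_{\FNo}}_*v$ as initial velocity, the paper must first give $\overline{\H}$ a Lie pseudogroup structure: this is done by lifting $\H$ to the desingularization (an orbifold), invoking Salem's theorem from \cite[Appendix D]{Molino}, and then using a constant-rank argument to show the lifted closed orbit submerses onto $\overline{\H}(q)$, so that $v$ can be hit. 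Your appeal to ``finitely many spanning flows'' on the stratum quotient hides precisely this construction, and without it the application of Theorem \ref{main-theorem} (which needs an actual continuous flow of isometries on the reduction's quotient) is not yet available. Finally, your one-line argument for the transnormality of $\overline{\F}$ is too quick as stated — perpendicularity to the $\F$-leaves met does not formally give perpendicularity to the closures without the equidistance/equifocality statement — but this is a known result of Molino which the paper simply cites, so it is not a substantive gap.
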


Molino himself proved the conjecture for regular Riemannian foliations, i.e., foliations where all the leaves have the same dimension; see \cite{Molino}. In \cite{Alex4}, the first author proved the conjecture for polar foliations, i.e., foliations admitting a totally geodesic submanifold transveral to the regular leaves and   which meets every leaf perpendicularly. Recently, the first author and Lytchak remarked in \cite{AlexLytchak} that  Molino's conjecture holds for so-called \emph{infinitesimally polar foliations} which, locally around each point $p\in M$, are foliated diffeomorphic to  a polar foliation. This is equivalent to saying that any local quotient $U_{\alpha}/\F_{\alpha}$ is an orbifold; see also \cite{AlexBriquetToeben}.

In this paper we prove Molino's conjecture for the  class of singular foliations considered in his book, namely orbit-like foliations; see \cite[p. 210]{Molino} for Molino's description about the state of the art of the known foliations at that time.
Recall that a singular Riemannian foliation is called  
\emph{orbit-like foliation}, if its restriction to each slice is diffeomorphic to a homogeneous foliation; see Section \ref{Preliminaries} for definitions, examples and remarks.

\begin{theorem}
\label{theorem-molinos-conjecture-orbitlike} 
Let $\F$ be an  orbit-like foliation on a complete Riemannian manifold $M.$ Then the closure of the leaves of $\F$ is a singular Riemannian foliation. 
\end{theorem}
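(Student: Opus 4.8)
The plan is to reduce Molino's conjecture for orbit-like foliations to the smoothness result of Theorem \ref{main-theorem}, via the local structure theory of singular Riemannian foliations (Molino's local model) and a gluing argument. First I would set up the problem locally: fix a point $p\in M$ and a distinguished neighborhood $U$ of $p$ adapted to $\F$, so that $\F|_U$ is described by a submetry $\pi:U\to U/\F$, and the slice $S_p$ at $p$ carries a homogeneous foliation by the orbit-like hypothesis. The task is to show that the closures $\overline{L}\cap U$ fit together, as $L$ ranges over the leaves meeting $U$, into a partition that is again a singular Riemannian foliation, i.e.\ it is locally given by the orbits of families of (smooth) vector fields and the metric condition (geodesics normal to one closure leaf are normal to all) is preserved. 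Once this local statement holds on each stratum piece and one checks compatibility on overlaps, the global statement of Theorem \ref{theorem-molinos-conjecture-orbitlike} follows by the standard patching of singular Riemannian foliations.

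The key analytic input comes from Molino's observation, recalled in the introduction (cf.\ \cite[Thm.~5.2]{Molino}): within a distinguished neighborhood the projection $\pi(\overline{L}\cap U)$ is a submanifold of the local quotient $U/\F$ that is an orbit of a continuous flow (or, more precisely, a finite family of commuting continuous flows / a continuous action of an abelian group) of isometries $\varphi_\alpha$ on $U/\F$, which itself is an orbit space by the orbit-like assumption (the restriction to a slice being diffeomorphic to a homogeneous foliation makes $U/\F$ isometric to a quotient $S_p/K_p$ for a compact group $K_p$). This is exactly the hypothesis of Theorem \ref{main-theorem}: a continuous flow of isometries on an orbit space of a proper isometric action. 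Applying Theorem \ref{main-theorem}, each such $\varphi_\alpha$ is smooth, hence the projection of a $K_p$-equivariant smooth flow $\tilde\varphi_\alpha$ on $S_p$; differentiating gives a smooth equivariant Killing-type vector field on $S_p$, and extending it (via the product structure of the distinguished neighborhood and the homothety-invariance of the infinitesimal foliation, see Appendix \ref{Sec-linearized-vector-field}) produces smooth vector fields on $U$ whose flows preserve $\F$ and whose orbits, together with the leaves of $\F$, generate precisely the closures $\overline{L}$. Thus $\overline{\F}|_U$ is a singular foliation in the sense that it is locally generated by smooth vector fields; the Riemannian property of $\overline{\F}$ then follows because these vector fields are lifts of isometries of the metric quotient, so they move normal geodesics of $\F$ to normal geodesics, and a limiting/closure argument upgrades this to the statement that geodesics normal to $\overline{L}$ stay normal to all leaves of $\overline{\F}$.

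The main obstacle, and the place where most of the work will go, is controlling how the local pictures on different strata and different distinguished neighborhoods glue together: a priori the flows $\varphi_\alpha$ produced on $U_\alpha/\F_\alpha$ depend on the chart, and one must show that on overlaps $U_\alpha\cap U_\beta$ the resulting modules of smooth vector fields generate the same partition $\overline{\F}$, and that passing from a smaller stratum to a larger one does not destroy smoothness (the infinitesimal foliation changes, and one needs the orbit-like condition precisely to ensure the local quotients remain orbit spaces so that Theorem \ref{main-theorem} keeps applying). A second, more technical point is verifying that the abelian group closure acting on $U/\F$ really is a \emph{continuous} group of isometries of the orbit space (so that Theorem \ref{main-theorem} applies verbatim rather than to a single one-parameter subgroup) — this requires knowing that the closure of the holonomy/structural group acts by isometries on the transverse quotient, which is where Molino's structure theory for the transverse model and the completeness of $M$ enter. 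Once these compatibility and continuity issues are settled, the conclusion is essentially formal.
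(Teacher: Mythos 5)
Your overall strategy coincides with the paper's: describe the leaf closures locally through continuous isometric flows on a local quotient which, by the orbit-like hypothesis, is an orbit space; apply Theorem \ref{main-theorem} to get smoothness; lift via Schwarz and extend along the leaves. However, the central step you take as given --- that inside a distinguished neighborhood the projection of $\overline{L}$ is spanned by \emph{continuous one-parameter groups of isometries of the singular quotient} $U/\F$, in every direction tangent to the projected closure --- cannot simply be quoted from \cite[Thm.~5.2]{Molino}: that theorem concerns regular Riemannian foliations, and in the singular setting it only applies to the stratum foliation $\F|_{\Sigma}$, yielding that ${\pi_{\FNo}}_{*}v$ is tangent to the orbit of the closure of the singular holonomy pseudogroup $\H$ acting on the quotient of a local reduction $(\newN,\FNo)$ (Definition \ref{singular-holonomy-pseudogroup}). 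Upgrading this to an actual continuous flow of isometries of $\newN/\FNo$ with prescribed velocity ${\pi_{\FNo}}_{*}v$ at $q$ is precisely where the paper's new machinery enters: one desingularizes the reduction (Section \ref{section-blow-up}), lifts $\H$ to a pseudogroup of isometries of the orbifold quotient (Proposition \ref{lemma-lifted-flow-isometry} and Remark \ref{remark-blowup-propositions-moregeneral}), invokes Salem's theorem \cite[Appendix D]{Molino} that the closure of the lifted pseudogroup is a Lie pseudogroup generated by Killing fields, and then uses an \'etale-morphism/constant-rank argument to show the orbit map is a submersion, so that $\hat v$ lifting ${\pi_{\FNo}}_{*}v$ exists and a Killing field with that value projects to the desired flow. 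You flag the continuity of the closure group as a ``technical point'' but supply no mechanism; without this blow-up/Salem step there is no flow to which Theorem \ref{main-theorem} can even be applied, so this is a genuine gap in the argument rather than a routine verification.

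Two further remarks. The transnormality of $\overline{\F}$ does not require your ``limiting/closure argument'': Molino already proved that the leaf closures are closed submanifolds forming a transnormal system (via equifocality), so the only thing left to prove is that $\overline{\F}$ is a \emph{singular foliation}, i.e.\ that every $v\in T_q\overline{L_q}$ is the value at $q$ of a smooth vector field everywhere tangent to the closures. For the same reason your announced ``main obstacle'' --- gluing the local constructions across charts and strata --- is not an obstacle at all: the partition $\overline{\F}$ is globally defined from the outset, and the singular-foliation property is a pointwise condition, so the locally constructed vector fields need not be compatible on overlaps; they only need to be tangent to the (global) closures and to span their tangent spaces. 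Finally, note that in the paper the group acting on the reduction $\newN$ acts smoothly but not by isometries; Theorem \ref{main-theorem} only requires the leaves to be orbits of a proper action, which is why the reduction equipped with the metric of Proposition \ref{lemma-same-transverse-metricN} suffices, whereas your identification of the local quotient with $S_p/K_p$ glosses over the holonomy (disconnected leaves) that the pseudogroup $\H$ is introduced to handle.
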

\begin{remark}
As we will see in Corollary \ref{corollary-Forbitlike-homogeneous}, if $\F$ is a closed orbit-like foliation, then for each point of the leaf space $M/\F$
one can find a neighborhood that can be identified with $(\newN/G)/\H$ where $G$ is a compact group acting on a submanifold $\newN$ of $M$ and $\H$
is a pseudogroup of isometries acting on $\newN/G$. This provides a local description of  $M/\F$ and its smooth structure. This kind of result may be interesting in the study of proper groupoids and integrable Poisson manifolds
\cite{CrainicFernandes}. According to \cite{CrainicIvan}
and \cite{Pflaum}, the orbits of the proper groupoids are,
at least locally, described as  leaves (plaques) of orbit-like foliations for the apropriate metric. In particular, when $M$ is compact, they can be seen as leaves  of orbit-like foliation on a compact manifold. On the other hand, recall that the orbits of a proper groupoid are closed. Since there exist orbit-like foliations with non closed leaves, they are  not orbits of  proper groupoids.

\end{remark}

This paper is organized as follows.
In Section \ref{sketch} we briefly sketch the proofs of Theorems  \ref{main-theorem} and \ref{theorem-molinos-conjecture-orbitlike}.
In Section \ref{Preliminaries} we review, based on \cite{AlexRadeschi}, some basic concepts such as singular Riemannian foliations, infinitesimal foliations, orbit-like foliations and flows of isometries on  leaf spaces.
In Section \ref{Preliminaries} we also provide a quick introduction to the new tools, namely \emph{blow-up functions} and \emph{ reduction of a foliation} that are used in the proof of the main theorems.
In Section \ref{section proof-smoothflow} and Section \ref{sec-proof-molino}  we prove Theorem \ref{main-theorem} and Theorem \ref{theorem-molinos-conjecture-orbitlike} respectively. Finally in Section \ref{new-tools} we give the proofs of the new tools.

\dank The authors are very grateful to  Alexander Lytchak for inspiring the main questions of this work and  for his insightful comments and suggestions.  
The authors also thank Gudlaugur Thorbergsson and Burkhard Wilking for their support and Stephan Wiesendorf and Dirk T\"{o}ben for their  suggestions. 

%
%

\section{Sketch of proof of the main results}
\label{sketch}

For the sake of motivation, in this section we provide some ideas of the proofs of the main results. 
Technical aspects of the proofs will be discussed in later sections of the paper. 

\subsection{Sketch of proof of Theorem \ref{main-theorem}}

\subsubsection{Euclidean case}
\label{subsection-sketch-euclideancase}

In what follows  we briefly give an idea of proof of Theorem \ref{main-theorem}  in a particular but important case.

We assume that:
\begin{enumerate}
\item $M=\mathbb{R}^{n}$  

\item $K$ is a closed subgroup of $\mathrm{Iso}(\mathbb{R}^{n})$.

\item The foliation is the partition of the ambient into the orbits, i.e., $\F:=\{K(p)\}_{p\in \mathbb{R}^{n}}$

\item there exists a $C^1$-flow $\varphi$ on $\mathbb{R}^n/K$,

\item Theorem \ref{main-theorem} is already valid for foliations with lower singularities.

\end{enumerate}

We want to prove that the flow $\varphi$ is smooth.
This is equivalent to saying that for each $K$-invariant smooth function $h:\mathbb{R}^{n}\to\RR$ the
function $u_{h}: \mathbb{R}^n\times I\to \mathbb{R}$ 
defined as $u_h(x,t):=(\varphi_{t}^{*}h)(x)$ is smooth.

Firstly, recall that  the mean curvature vector field $\vec{H}$ of the principal $K$-orbits  projects to a well defined vector field  
on $\RR^n/K$  and its projection is preserved by every isometry $\varphi_{t}$; see \cite{AlexRadeschi}.

Secondly, note that isometry $\varphi_{t}$ also preserves the Laplacian operator on the orbit space of the principal strata.

Finally, recall that the Laplacian $\bigtriangleup$ of the principal stratum $(\mathbb{R}^{n})_{princ.}$ 
can be obtained  from the Laplacian  $\underline{\bigtriangleup}=\bigtriangleup_{(\mathbb{R}^{n}/K)_{princ.}}$ on its orbit space $(\mathbb{R}^{n}/K)_{princ.}$ by
the following equation:
$$\bigtriangleup h= \underline{\bigtriangleup} h - \langle \nabla h, \vec{H}\rangle.$$ 

Since $\varphi_{t}$ preserves both summands in the right-hand side of the above equation, we infer that 
 $\bigtriangleup \varphi_{t}^{*} h  =\varphi_{t}^{*} \bigtriangleup h$ holds in the principal stratum.

Therefore, as remarked in  \cite{AlexRadeschi}, the following equation holds in a weak sense
\begin{equation*}
\bigtriangleup \varphi_{t}^{*} h  =\varphi_{t}^{*} \bigtriangleup h,
\end{equation*} 
where $\bigtriangleup h$ denotes the Laplacian operator.

 Set $u_{h}(\cdot,t):=\varphi_{t}^{*} h$ and  $f(\cdot,t)=-\varphi_{t}^{*}(t)\bigtriangleup h +\frac{d}{d t} u_{h}(\cdot,t)$.
From the equation above the following equation holds in a weak sense
\begin{equation*}
\frac{d}{d t} u_{h}-\bigtriangleup u_{h}=f.
\end{equation*}

This equation motivates us to consider the  results of regularity of parabolic equations to prove that $u_{h}$ is smooth.
The regularity theory of parabolic equations requires some compatibility conditions. These conditions  can be checked using 
the assumption that Theorem \ref{main-theorem} is already valid for foliations with lower singularities, blow-up of $\F$
and some properties of a class of functions that we have called  ``blow-up functions''. 
Once these conditions have been checked, 
we can apply the regularity theory and a bootstrap type argument to conclude that 
$u_h$ is smooth.


\subsubsection{General case}
Let us now say a few words about the proof of the theorem in the general case. 
As we will see later, it will be possible to reduce our problem or to the Euclidean case or to the folowing situation:
\begin{enumerate}
\item $\F=\{K(p)\}_{p\in N}$ is an homogenous SRF (i.e, the leaves are orbits of a compact group $K$) on a trivial fiber bundle $N\to Y$, where  $N=X\times Y$ and $Y=\mathbb{R}$is an integral curve of $\varphi$.
\item there is a metric $\metric_{N}$ such that the fibers $X_{t}:= X\times \{t\}$  are flat but not necessarly totally geodesic.
\item $\varphi$ may not be a flow of  isometries in the quotient $(N,\metric_{N})/\F$ but, for each fixed $t$, the flow  $\varphi$ induces an  an isometry 
$$\phi(t):X_0/\F:=(X\times \{0\}) /\F \to X_t/\F:=(X\times\{ t\})/ \F$$ defined as $\phi(t)(x^{*}):= \varphi(x^{*},t)$. 

\end{enumerate}

We will divide the proof of the smoothness of $\varphi$
into two steps. 

In the first step, we fix the fiber 
$X_0:=X\times\{0\}$ and prove the smoothness on $X_0\times I.$
In other words, for each \emph{basic} function $h$, i.e., a $K$-invariant function,
we prove that the map $(x,0,t)\mapsto \varphi^{*}h(x,0,t)$ is smooth. This is the most important step. Set $u_{h}(\cdot,t):=\phi(t)^{*} h$ and  $f(\cdot,t)=-\phi^{*}(t)\bigtriangleup h +\frac{d}{d t} u_{h}(\cdot,t)$.
Here $\bigtriangleup$ is the Laplacian on the fiber. 
Since the fibers are flat, one can follow the same argumets used in the proof of the Euclidian case and get the next equation in weak sense
\begin{equation*}
\frac{d}{d t} u_{h}-\bigtriangleup u_{h}=f.
\end{equation*}
Again, like in the Euclidean case
we can apply the regularity theory, blow-ups and a bootstrap type argument to conclude that 
$u_h$ is smooth on $X_0\times I$, i.e., the map $(x,0,t)\mapsto \varphi^{*}h (x,0,t)$ is smooth.

In the second step of the proof, we extend the smoothness of $\varphi$
to the whole $(X\times Y)/\F\times I$. In other words,
we prove that  the map $(x,y,t)\mapsto \varphi^{*}h (x,y,t)$ is smooth for each basic function $h$.
This is done using the inverse function theorem on
orbit spaces \cite{Schwarz} and the fact that $\varphi$ is a flow.


\subsection{Sketch of proof of Theorem \ref{theorem-molinos-conjecture-orbitlike} }

We first need some definitions and notations. Let
\begin{enumerate}
\item $\Sigma$ be a minimal stratum,
\item $D\subset \Sigma$ be a slice at $q\in \Sigma$ to the foliation $\F|_\Sigma$,
\item $\newN:=\exp(\nu\Sigma|_{D})\cap B_{\epsilon}(D)$ where   $\nu\Sigma|_{D}$ denotes   the restriction of the normal bundle of $\Sigma$ to  the slice $D$,
\item  $\FNo:=\F \cap \newN$ be the foliation induced on $\newN$.  
\end{enumerate}

The difficult part of the proof of Molino's conjecture is to prove that the closure $\overline{\F}$ of $\F$ is a \emph{singular foliation}; see Definition \ref{definition-srf} below.
More precisely, our goal is to prove that  for each given  vector $v\in T_{q}D$  tangent  to  $\overline{L_{q}} \subset \Sigma$,  there exists a smooth vector field $\vec{Y}$ tangent to the leaves of $\overline{\F}$ so that $\vec{Y}(q)=v$.

Using singular holonomy and desingularization we prove that there exists a continuous flow of isometries $\varphi$ on 
$(\newN,\tilde{\mathrm{g}})/\FNo$ such that ${\pi_{\FNo}}_{*}v$ is tangent to the integral curve of $\varphi$. Here $\pi_{\FNo}$ is the canonical projection and   $\tilde{\mathrm{g}}$ is a 
metric on $\newN$ so that $(\newN,\FNo)$ is a singular Riemannian foliation with the same \emph{transverse metric} of $\F$, i.e., the distance between  leaves of $\FNo$ is the
 same as the distance between the plaques of $\F$ that contain such leaves.

Since  $\FNo$ is homogeneous, we can apply Theorem 
\ref{main-theorem} and conclude that $\varphi$ is smooth. The existence of $\vec{Y}$ follows from \emph{Schwarz's Theorem} \cite{Schwarz} that allows us to lift
smooth flows on the orbit space to smooth flows on the manifold.

%
%
\bigskip

\section{Preliminaries}
\label{Preliminaries}

\subsection{Singular Riemannian foliations}
Let us recall the definition of a singular Riemannian foliation.
\begin{definition}[SRF]
\label{definition-srf}
A partition $\F$ of a complete Riemannian manifold $M$ by connected immersed submanifolds (the \emph{leaves}) is called a  \emph{singular Riemannian foliation} (SRF for short)  if it satisfies condition (a) and (b):
\begin{enumerate}
\item[(a)] $\F$ is a \emph{singular foliation},
i.e., for each leaf $L$ and each $v\in TL$ with footpoint $p,$ there is a smooth vector field $\vec{Y}$ with $\vec{Y}(p)=v$ that belongs to $\mathfrak{X}(\F)$, i.e.,  that is tangent at each point to the corresponding leaf.
\item[(b)] $\F$ is a \emph{transnormal system}, i.e.,  every geodesic  perpendicular  to one leaf   is perpendicular to every leaf it meets.
\end{enumerate}
\end{definition}
A leaf $L$ of  $\F$ (and each point in $L$) is called \emph{regular} if the dimension of $L$ is maximal, otherwise $L$ is called {\it singular}. In addition a regular leaf is called \emph{principal} if it has trivial holonomy; for a definition of holonomy, see e.g. \cite[page 22]{Molino}.

A typical example of a singular Riemannian foliation is  the partition of a Riemannian manifold into the connected components of the orbits of an isometric action. Such singular Riemannian foliations are called \emph{Riemannian homogeneous}. In this case the principal leaves coincide with the principal orbits. We will sometimes denote a Riemannian homogeneous foliation, given by the action of a Lie group $K$, by $(M, K)$, provided the $K$-action is understood.

If a singular Riemannian foliation $(M,\F)$ is spanned by a smooth action of a Lie group, which does not necessarily act by isometries, then we call such a foliation \emph{homogeneous}.

\medskip

\subsection{The infinitesimal foliation at a point}
Let $(M,\F)$ be a singular Riemannian foliation. Given a point $p\in M$ and some small $\epsilon>0$, let $S_p=\exp_p (\nu_pL_p) \cap B_{\epsilon}(p)$ be a \emph{slice} at $p$, where $B_{\epsilon}(p)$ is the distance ball of radius $\epsilon$ around $p$. The foliation $\F$ induces a foliation $\F|_{S_p}$ on $S_p$ by letting the leaves of $\F|_{S_p}$ be the connected components of the intersection between $S_p$ and the leaves of $\F$. In general the foliation $(S_p,\F|_{S_p})$ is not a singular Riemannian foliation with respect to the induced metric on $S_p$. Nevertheless, the \emph{pull-back} foliation $\exp_p^{*}(\F)$ is a singular Riemannian foliation on $\nu_pL_p\cap B_{\epsilon}(0)$ equipped with the Euclidean metric (cf.~\cite[Proposition 6.5]{Molino}), and it is invariant under homotheties fixing the origin (cf.~\cite[Lemma 6.2]{Molino}). In particular, it is possible to extend $\exp^*(\F)$ to all of $\nu_pL_p$, giving rise to a singular Riemannian foliation $(\nu_pL_p,\F_p)$ called the \emph{infinitesimal foliation} of $\F$ at $p$.

If $(M,K)$ is Riemannian homogeneous, the infinitesimal foliation $(\nu_pL_p,\F_p)$ is again Riemannian homogeneous, given by the action of (the identity component of) the isotropy group $K_p^0$ on $\nu_pL_p$ (the \emph{slice representation}).

The converse however is not true: namely, there are examples of non-Riemannian homogeneous foliation all of whose infinitesimal foliations are.
\begin{definition}
\label{definition-locally-homogeneous}
A SRF $\F$ on $M$ is called an \emph{orbit-like foliation} if for each point $q$ there exits a compact group $K_q$ of isometries of $\nu_{q}L_{q}$ such that the infinitesimal foliation $\F_{q}$ is the partition of $\nu_{q}L_{q}$ into the orbits of the action of $K_{q}$. 
\end{definition}

Examples of orbit-like foliations are given by the closures of (regular) Riemannian foliations. Other examples can be obtained via a procedure called \emph{suspension of homomorphism}; for more details see  e.g. \cite[sec. 3.7]{Molino}.

\begin{example}
\label{example-suspension-homomorphism}

Consider a nonhomogeneous manifold  $Q$ and let $(V=V_1\times V_2, \widetilde{\F})$ be a homogeneous Riemannian foliation given by the orbits of an isometric action of a closed subgroup $K\In\Iso(V_1)$ that fixes a point in $V_1$. Take a homomorphism $\rho:\pi_{1}(Q, q_{0})\rightarrow  H\subset \Iso(V_2)$. By construction, $\rho$ induces an action $\rho'=\rho\times id: \pi_1(Q,q_0)\to\Iso( V_1\times V_2)$ that preserves the foliation $\widetilde{\F}$. Then $\pi_{1}(Q,q_{0})$ acts diagonally on the product $\widetilde{M}=\widetilde{Q}\times V$ where $\widetilde{Q}$ is the universal cover of $Q$ and the action on the first factor is the action of deck transformations. Since the action is free, the quotient  $M:= \widetilde{M}/\pi_1(Q,q_0)$ is a manifold, and there is a map
\begin{eqnarray*}
\mathcal{P}: M &\longrightarrow& Q \\
{}[\widetilde{q},v]& \longmapsto & [\widetilde{q}]
\end{eqnarray*}
that makes $M$ the total space of a fiber bundle with base $Q$. In addition, the fiber is $V$ and the structural group is given by the image of $\rho'$.
Finally, the SRF $(\widetilde{M}, \widetilde{Q}\times \widetilde{\F})$ induces via $\widetilde{M}\to M$ a SRF $(M,\F)$, which can be proved to be an
 (nonhomogeneous) orbit-like foliation.
\end{example}


\medskip

\subsection{Smooth structure of a leaf space.} 
Let $(M,\F)$ be a SRF with closed leaves. The quotient $M/\F$ is equipped with the natural  quotient metric and a natural quotient ``$C^{k}$ structure''. The $C^{k}$ \emph{structure} on $M/\F$ is  given by
the sheaf $C^k(M/\F):=C^k_b(M,\F)$ of $C^{k}$ \emph{basic functions} on $M$, i.e., those functions that are constant along the leaves of $\F$. A function $f\in C^k_b(M,\F)$ is called \emph{of class $C^k$}, while $f\in C^{\infty}(M,\fol)$ is called \emph{smooth}.
One says that a map $\varphi: M_{1}/\F_{1} \to M_{2} /\F_{2}$  between two leaf spaces of SRF  is of class $C^{k}$ if the pull-back of a smooth function $f\in C^{\infty}(M_{2}/\F_{2})$ is a function $\varphi^*f\in C^k(M_{1}/\F_{1})$. When $\varphi$ is smooth, this definition coincides with the definition of  Schwarz \cite{Schwarz}.


\medskip

\subsection{Flow in a leaf space}
Consider a SRF $\F$ on a complete Riemannian manifold $M$ with closed leaves. 

\begin{definition}
\label{definition-flow}
A continuous map  $\varphi:M/\F\times I\to M/\F$ is called \emph{a flow on} $M/\F$ if the following conditions hold:
\begin{enumerate}
\item[(a)] $\varphi$ is a one-parameter local group;
\item[(b)] for each $p\in M/\F$ each integral curve $t\to \varphi(p,t)$ is contained in the quotient of a stratum;
\item[(c)] there exists a  locally bounded derivative $\bar{Y}$ on $M/\F$  associated to the flow, such that for each smooth basic function $h$ we have
$\bar{Y}\cdot h (p)=\frac{d}{d t}h(\varphi(p,t))|_{t=0}$. 
\end{enumerate}
\end{definition}

\begin{remark}
\label{remark-one-parametergroup}
These conditions are independent and do not imply each other. However, in the particular case where $\varphi$ is a one-parameter local group of isometries of $M/\F$, one can prove that (b) holds (see e.g., \cite[section 5.1]{GorodskiLytchak}) and that (a) and (b) imply (c); see e.g. 
Remark \ref{remark-locally bounded}.

\end{remark}

\medskip

\subsection{SRF's with disconnected leaves}
\label{subsection-disconnected-leaves}
Sometimes one has to consider Riemannian foliations with non-connected leaves. This kind of foliations come up naturally: consider for example a Riemannian homogeneous foliation $(N,K)$. Even when $K$ itself is connected, some isotropy subgroup $K_p$ might not be, and its orbits under the slice representation might also be disconnected. Therefore the Riemannian homogeneous foliation $(\nu_p(K\cdot p),K_p)$ would be an example of a disconnected singular Riemannian foliation. In general, a \emph{singular Riemannian foliation with disconnected leaves} $(N,\F)$ is a triple $(N,\F^{0},\H)$ where $(N,\F^{0})$ is a (usual) SRF, $\H$ is a group of isometries of $N/\F^{0}$, and the \emph{non-connected leaves} of $\F$ are just the orbits $\H\cdot L_p$, for $L_p\in \F^{0}$.
\\

A slight generalization of such a triple, which we still call singular Riemannian foliation with disconnected leaves and still denote $(N,\F)$, is a triple 
$(N,\F^{0},\H)$ where $(N,\F^{0})$ is again a SRF, and $\H$ is a pseudogroup of local isometries of $N/\F^{0}$. Again, the non-connected leaves of $\F$ are $\H$-orbits of leaves of $\F^{0}$. Such a foliation appears naturally when dealing with the \emph{singular holonomy} around a non-closed leaf (cf. Definition \ref{singular-holonomy-pseudogroup}).


\subsection{A quick introduction to the new tools}
\label{section-quickintroduction-tools}

In this section we provide 
a quick introduction to the new tools used in the proofs of
the  Theorems \ref{main-theorem} and  
\ref{theorem-molinos-conjecture-orbitlike}.
This will allow the reader to follow the proofs of the main theorems.
Details and proofs about the new tools can be found in  Section \ref{new-tools}.

\subsubsection{Blow-up's}
\label{new-section-blow-up}

Throughout this section, $\Sigma$ will denote a relatively compact  neighborhood in the minimal stratum of the foliation $\F$ on $M$. 
If the minimal stratum is compact, one can, equivalently, take $\Sigma$ to be the whole minimal stratum.

Following a procedure analogous to the classical blow-up of isometric actions, one can prove
that there exist:
\begin{enumerate}

\item a saturated  neighborhood $B$ of $\Sigma$ and a new  Riemannian manifold $\widehat{B}$ with a (blow-up) metric $\hat{\metric}$;

\item a new (singular) Riemannian foliation $\widehat{\F}$ on $(\widehat{B},\hat{\metric})$ whose  minimal singular leaves
have dimension greater than the dimension of the minimal singular leaves of $\F$;

\item and a (blow-up) map $\widehat{\rho}:(\widehat{B},\widehat{\F})\to (B,\F)$
that is foliated, i.e., that sends leaves to leaves. 

\end{enumerate}

\begin{remark}
\label{remark-new-desingularization}
As we will recall in Section \ref{section-blow-up}, it is possible to construct, via compositions of projective blow-up's, a surjective map $\rho_{\epsilon} : M_{\epsilon}\to  M$ with the following properties:
\begin{enumerate}
\item $M_{\epsilon}$ is a smooth complete Riemannian manifold foliated by a regular Riemannian foliation $\F_{\epsilon}$.
\item The map  $\rho_{\epsilon}$ sends leaves of $\F_{\epsilon}$ to leaves of $\F$.
\end{enumerate}
This map is called a \emph{desingularization map}.
 If $M/\F$ is compact, then for each small $\epsilon>0$ one can choose $M_{\epsilon}$ and $\rho_{\epsilon}$ so that $d_{GH}(M_{\epsilon}/ \F_{\epsilon}, M/\F)<\epsilon$ where $d_{GH}$ is the Gromov-Hausdorff distance. 
\end{remark}

The first useful  property of blow-up's  will be proved in 
Proposition \ref{lemma-lifted-flow-isometry}, namely:
\begin{proposition}
\label{new-lemma-lifted-flow-isometry}
Each flow of isometries $\varphi:B/\F\times I\to B/\F$ can be lifted to a  flow of isometries 
$\hat{\varphi}: \widehat{B} /\widehat{\F} \times I \to \widehat{B} /\widehat{\F}.$ 
\end{proposition}

It is natural to ask: \emph{under what conditions
does the smoothness of the pull back function $\hat{\rho}^{*}f$ of a basic
function $f$ imply the differentiability of $f$ on $B$?}
The answer to this question
 in particular cases will be  useful to check regular conditions of certain
(weak) parabolic equations.

As we will see in Definition \ref{def-blow-upfunction}, a continuous $\F$-basic function $h$ on $B$ belongs to $\mathcal{B}$ or is a \emph{blow-up function}, if
\begin{enumerate}
\item[(a)] $h\circ\hat{\rho}$ is a smooth $\widehat{\F}$-basic function on $\widehat{B}$.
\item[(b)]  
The restriction of $h$ to $\Sigma$ and to $B-\Sigma$ is smooth.
\item[(c)] $X\cdot h=0$ for each $X\in \nu\Sigma$.
\end{enumerate} 

The answer to our question will be given in Proposition \ref{lemma-tecnhical}, namely:
\begin{proposition}
\label{new-lemma-tecnhical}
If $h\in \mathcal{B}$ then  $h$ is a $C^{1}$ function. 
\end{proposition}

The family of blow-up functions has another relevant property. It is closed under derivation of transverse Killing
vector fields. More precisely, consider a flow of of isometries $\varphi$ on $B/\F$ and 
let $\bar{Y}$ be the associated derivative in the quotient $B/\F$; recall Definition \ref{definition-flow}. 
Then, we have the next result proved  in Proposition
\ref{B-are-X-closed}:

\begin{proposition}
\label{new-B-are-X-closed}
Assume that the lift $\hat{\varphi}$ on  $\widehat{B}/\widehat{\F}$ is smooth. 
Then  for each $h\in \mathcal{B}$ we have  $\varphi_{s}^{*} h$ and $\varphi_{s}^{*}(\bar{Y}\cdot h) \in \mathcal{B}$.
\end{proposition}

As we have seen in Section \ref{subsection-sketch-euclideancase}, 
we will have to deal with functions $u_h(x,t):=(\varphi_{t}^{*}h)(x)$.  
Propositions \ref{new-lemma-tecnhical} and \ref{new-B-are-X-closed}
assure that $\frac{d^{n}}{d t^{n}} u_{h}(\cdot, t)$ is $C^{1}$ for all $t$. 
More details will be present in the proof of Lemma \ref{time-derivative-u-C1}.

\subsubsection{Local reduction}
\label{new-subsection-local-reduction}

In Section \ref{sketch} we have briefly seen the appearance of the local reduction $\newN$ and the metrics $\tilde{\mathrm{g}}$ 
and $\mathrm{g}_\newN$. In what follows we are going to say a few more words about them.

We start by fixing some notations:
\begin{enumerate}
\item Let $(M,\F)$ be a SRF, and let $\Sigma\In M$ be a stratum of $\F$. 
\item Let $Y$ be a submanifold contained in a slice (a transverse submanifold) of the regular foliation $(\Sigma,\F|_{\Sigma})$. 
\item  Consider $\satsub:=\pi_{\F}^{-1}(\pi_{\F}(Y))$ the \emph{saturation} of $Y$. We also assume that $Y$ coincides with the intersection of $\satsub$  
with the slice.
\item Let $\nu(\satsub)\big|_Y$ be the normal bundle of the saturation $\satsub$ restricted to $Y$.
\end{enumerate}

We  define the \emph{local reduction of $(M,\F)$ along $Y$} as
$$\newN:= \exp\left(\nu(\satsub)\big|_Y\right)\cap B_\e\satsub,$$ 
where  $B_\e\satsub$ is a neighborhood of $\satsub$.
The local reduction $\newN$ is a fiber bundle, where the projection is the metric projection $\Projection_Y:\newN\to Y$. The fiber of $\Projection_Y$ at a point $p\in Y$ will be denoted by $\newN_p$.

\begin{remark}
Notice that $\dim Y\leq\dim(\Sigma/\F)$. Throughout the paper we will be especially interested in the cases $\dim Y=1$ and $\dim Y=\dim(\Sigma/\F)$. More precisely, in the proof of Theorem \ref{main-theorem} we will take $Y$ to be a curve that projects to an integral curve of the flow $\varphi$, while for Theorem \ref{theorem-molinos-conjecture-orbitlike} we will consider $Y$ to be a slice of $(\Sigma,\F_{\Sigma})$.
\end{remark}

The foliation $\F$ intersects $\newN$ in a foliation $\FNo:=\F \cap \newN$.  
It is possible to check that the leaves of $\FNo$ are contained in the fibers of $\Projection_Y$.
The foliation $\FNo$ turns out to be a SRF with respect to 2 metrics.
 
The first metric  $\tilde{\mathrm{g}}$, constructed in Proposition \ref{lemma-same-transverse-metricN}, 
will be used in Section \ref{sec-proof-molino} to prove Theorem \ref{theorem-molinos-conjecture-orbitlike}.
\begin{proposition}
\label{new-lemma-same-transverse-metricN}
There exists a metric $\tilde{\mathrm{g}}$ on $\newN$ that preserves the transverse metric of $\F$, i.e., 
the distance between  leaves of $\FNo$ is the
 same as the distance between the plaques of $\F$ that contain such leaves.
In particular $\FNo$ is a  SRF on $(\newN,\tilde{\mathrm{g}})$.
\end{proposition}

As we will see in Corollary \ref{corollary-Forbitlike-homogeneous},  when $\F$ is an orbit-like foliation, then  $\FNo$ is homogenous, but not necessarly Riemannian homogenous.
\begin{proposition}
\label{new-corollary-Forbitlike-homogeneous}
Let $\F$ be an orbit-like foliation. Let $q$ be a singular point, $Y$ a slice in the stratification $\Sigma$ that contains $q$ and $\newN$
the reduction along $Y$. Then there exists a compact group $G$ acting on $\newN$ such that the leaves of  $\FNo$ are orbits of $G$, i.e., 
$\FNo$ is homogenous SRF on $(\newN,\tilde{\metric}).$

\end{proposition}

The second metric  $\mathrm{g}_\newN$, constructed in  Proposition \ref{lemma-submersion-N}, 
will be used in Section  \ref{section proof-smoothflow} to prove Theorem \ref{main-theorem}.

\begin{proposition}
\label{new-lemma-submersion-N}
There exists a metric $\mathrm{g}_\newN$ on $\newN$ with following properties:
\begin{enumerate}
\item[(a)] The submersion $\Projection_Y: \newN\to Y$ is Riemannian.
\item[(b)] Each fiber $\newN_p$ is flat.
\item[(c)] The foliation $\FNo$ is a SRF on $(\newN,\mathrm{g}_\newN).$
\end{enumerate}
\end{proposition}

Note that two different leaves $L^{0}_{1}$, $L^{0}_{2}$  of  $\FNo$ can be contained in the same leaf of $\F$. 
One can prove that for any such $L^{0}_{1}$, $L^{0}_{2}$, there exists an isometry $\tau$  on $\newN/\FNo$ (with respect to $\tilde{\metric}$) 
that sends  $L^{0}_{1}$ to  $L^{0}_{2}$, i.e.,
\begin{equation}
\label{new-definition-tau}
\tau(x_{1}^{*})=x_{2}^{*}
\end{equation}
where $x_{i}^{*}$ is the projection of $L_{i}^{0}$ in $\newN/\FNo$.
The pseudogroup generated by these isometries will be denoted by $\H$  and the  foliation with disconnected leaves $(N,\FNo,\H)$ will be denoted by $\FN$; see
 Definition \ref{singular-holonomy-pseudogroup}. 
The basic functions of $\FN$ coincide with basic functions of $\F$ (see Remark \ref{remark-reduction}) and hence, $\FN$ is the correct foliation to consider in the proof of 
 Theorem \ref{main-theorem}.

The next result will follow from Remark \ref{remark-property-gN} and Lemma \ref{lemma-thegroupG}.

\begin{proposition}
\label{new-remark-property-gN} 
Assume that $Y$ projects to an integral curve of a flow of isometries $\varphi$ on $M/\F$, where $\F$ is homogenous. 
Let $\newN$ be a local reduction along $Y$. For $Y$ small enough, the projection $\newN \to Y$ is trivial and therefore we can
identify $\newN$ with $X\times Y$ where $X$ is a fiber of $N_0$ and $Y=(-\epsilon,\epsilon)$.
The flow $\varphi$ may not be a flow of  isometries in the quotient $(N,\metric_{N})/\FN$ but, for each fixed $t$, the flow  $\varphi$ induces an  an isometry 
$$\phi(t):X_0/\FN:=(X\times \{0\}) /\FN \to X_t/\FN:=(X\times\{ t\})/ \FN$$ defined as $\phi(t)(x^{*}):= \varphi(x^{*},t)$. 
\end{proposition}


\section{Isometric flows on orbit spaces: proof of Theorem \ref{main-theorem}}
\label{section proof-smoothflow}

The goal of this section is to prove Theorem \ref{main-theorem}. In particular, we are assuming that the leaves of the SRF $(M,\F)$ are the orbits of a smooth proper action of a Lie group $K$ on $M$.

 In order to avoid cumbersome notations, we will denote every basic function on $M$ and the induced function on $M/\F$ by the same letter.  

We know that  Theorem \ref{main-theorem} is true when all orbits have the same dimension, see e.g. \cite[Salem appendix D]{Molino} and Swartz\cite{Swartz}.
We assume by induction that  Theorem \ref{main-theorem} is true for a number of strata lower or equal to  $n-1$. 
\\

Let $q$ be a point of the minimal stratum, $q^*\in M/\F$ its projection in the quotient, and $Y^*$ a neighborhood of $q^*$ in the orbit of $\varphi$ through $q^*$. The preimage $\satsub=\pi_\F^{-1}(Y^*)$ is a regularly saturated submanifold of $M$ (cf. Section \ref{new-subsection-local-reduction}) such that $\F|_{\satsub}$ has codimension 1. Let $Y$ be the intersection of $\satsub$ with a slice at $q$ for the action of $K$ on $M$, and $\newN$ the local reduction of $(M,\F)$ along $Y$;  cf. Section \ref{new-subsection-local-reduction}. Unless explicitly stated otherwise, we will always consider the Riemannian metric $\mathrm{g}_N$  on $\newN$ defined in Proposition \ref{new-lemma-submersion-N}.

Before we go through the details, let us briefly recall the main idea of the proof. It is enough to show that $\varphi$ is smooth on $\newN/\FN\times I$. In other words, for a given smooth $\FN$-basic function $h$ on $\newN$ we will prove that $\varphi^{*}h$ is a smooth basic function on $\newN\times I$ with respect to the foliation $\FN\times\{*\}=\{L_{\newN}\times\{*\}\}$.

We will divide the proof of the smoothness of $\varphi$ into two steps. 
\begin{itemize}
\item[Step 1)] We restrict our attention to a fiber $\X:=\newN_{q_0}$ of the metric projection $\Projection_Y:\newN\to Y$ (cf. Section \ref{new-subsection-local-reduction}), and we prove in Proposition \ref{proposition-varhi-smooth-onfiber} that the restriction of $\varphi^{*} h$  to $\X\times I$ is smooth.
The main idea here is to use some some arguments of \cite{AlexRadeschi} to check that $u_h(x,t):=\varphi^{*} h(x,t)$ is a weak solution of a parabolic equation; see equation \eqref{eq-weak-parabolic-solution}.
We apply regularity theory of solutions of  linear parabolic equations to prove that $u_h$ is smooth. 
This theory requires some initial regularity conditions that will be checked using Propositions \ref{new-lemma-tecnhical} and \ref{new-B-are-X-closed}; see details in Lemma \ref{time-derivative-u-C1}.

\item[Step 2)] We extend the smoothness of $\varphi$ to the whole $\newN/\FN\times I$ using the inverse function theorem for orbit spaces; see \cite[page 45]{Schwarz}. This is proved in Proposition \ref{proposition-varhi-smooth-onsubbundle}.
\end{itemize}

Since we will be working on $(\newN,\FN)$ instead of $(M,\F)$, we should better make sure that $(\newN,\FN)$ is still homogeneous.

\begin{lemma}[The group $G$]
\label{lemma-thegroupG}
The points on the curve $Y$ have the same isotropy group  $G:=K_q$. Moreover, the restriction of $\FN$ to $\newN$ is the partition of $\newN$ into the orbits of the action of $G$. 
\end{lemma}
\begin{proof}
Consider $D$ a slice at $q$ in the singular stratum $\Sigma$ of the restricted foliation $\F|_{\Sigma}$. 
Let us denote $\tilde{\varphi}_t$ a flow of isometries on $D$ which is a lift of $\varphi_t$ and so that $Y$ is an integral curve; see e.g. \cite{Swartz}.
We want to prove that 
\begin{equation}
\label{eq1-lemma-thegroupG}
K_{\tilde{\varphi}_{t}(q)}=K_q.
\end{equation}
Consider the action $\mu:K_{q}\times D\to D$ and the induced homomorphism $\mu:K_{q}\to \mathrm{Iso}(D)$. Since we are dealing with isotropy groups, in order to prove
equation \eqref{eq1-lemma-thegroupG} it suffices to prove that
\begin{equation}
\label{eq2-lemma-thegroupG}
\mu(K_{\tilde{\varphi}_{t}(q)})=\mu(K_q).
\end{equation}

We first claim that $\tilde{\varphi}_{t} \mu(K_{q})\tilde{\varphi}_{t}^{-1}= \mu(K_{\tilde{\varphi}_{t}(q)}).$ Let $p$ be a principal point in $D$
(i.e., the leaf $L_p$ has trivial holonomy in $\Sigma$) and consider  $k\in \mu(K_{q})$. Note that, since $p$ is principal, $kp\neq p$. Set $k_1:=\tilde{\varphi}_{t}k\tilde{\varphi}_{t}^{-1}$. Note that $k_1\tilde{\varphi}_{t}(p)=\tilde{\varphi}_t(kp)$. On the other hand, since $\varphi_t$ is  an isometry in the quotient, and in particular sends loops into loops, there exists a  $k_2\in \mu(K_{\tilde{\varphi}_{t}(q)})$ such that $k_2 \tilde{\varphi}_{t}(p)=\tilde{\varphi}_{t}(k p).$ Therefore, since the same argument applies to other principal points near  $p$ (recall that the set of principal points is an open and dense set) we infer that $k_1=k_2$ and hence  $\tilde{\varphi}_{t} \mu(K_{q})\tilde{\varphi}_{t}^{-1}\subset \mu(K_{\tilde{\varphi}_{t}(q)}).$ The proof of the other inclusion is identical and hence the claim has been proved.

Now, since $D$ a slice at $q$ of $\F|_{\Sigma}$, we have that $K_{\tilde{\varphi}_{t}(q)}, K_{q}$ are compact Lie subgroups and $K_{\tilde{\varphi}_{t}(q)}\subset K_{q}$. These facts and the above claim imply equation \eqref{eq2-lemma-thegroupG}.
\end{proof}

\begin{remark}
In the particular case where $\F$ is Riemannian homogeneous, one can check that  $G$ acts isometrically. 

\end{remark}


\medskip

\subsection{The first step} Let us fix a fiber $\X:=\newN_{q_0}$ of $\Projection_Y:\newN\to Y$, and denote $X_t:=\newN_{\tilde{\varphi}_t(q_0)}$. Since the flow $\varphi:(\newN,\tilde{\textrm{g}})/\FN\times I\to (\newN,\tilde{\textrm{g}})/\FN$ acts by isometries and $Y^*$ is an orbit, then by
Proposition \ref{new-remark-property-gN} each $\varphi_t$ is an isometry between $(\X,\mathrm{g}_\newN)/\FN\to( X_{t},\mathrm{g}_\newN)/\FN$, \emph{where the metrics on the fibers are now flat}.

Since $\X$ and $X_{t}$ are flat, it follows from \cite[Proposition 2.1]{AlexRadeschi} that the mean curvature vector fields of the leaves in $\X$ and $X_{t}$ project to well defined vector fields in the regular strata of $\X/\FN$, $X_t/\FN$ respectively, and moreover $\phi(t):=\varphi_t$ sends one vector field to the other. On the other hand since $\phi(t)$ is an isometry, it preserves the Laplacian operator in the principal part of $X_{t}/\FN$.

Set $u_h(\cdot,t):=\phi(t)^*h$. From what said above, we obtain that the following equation holds a in weak sense (cf.  \cite[Lemma 2.5]{AlexRadeschi}):
\begin{equation}
\label{eq-weak-laplacianterm}
\bigtriangleup u_h=\bigtriangleup \phi(t)^*h  =\phi(t)^* \bigtriangleup h=u_{\bigtriangleup h}
\end{equation} 
where $\bigtriangleup h$ denotes the Laplacian operator of $X_{t}$ applied to the restriction $h|_{X_t}$.
\\

As in \cite[Proposition 2.3]{AlexRadeschi}, one can prove the next lemma, taking in consideration item (c)
of Definition \ref{definition-flow}.

\begin{lemma}
\label{lemma-varhi-C1}
The restriction of the flow $\varphi$ to $\mathsf{N}/\FN\times (-\epsilon,\epsilon)$ is a $C^{1}$  map.
\end{lemma}

  From the lemma above, $\ddt u_h$ is continuous and we can define 
  $$f(\cdot,t):=-\phi(t)^*\left(\bigtriangleup h\right)+ \ddt\left(\phi(t)^*h\right)= -u_{\bigtriangleup h}+\ddt u_h(\cdot,t)$$ and thus we have
\begin{equation}
\label{eq-weak-parabolic-solution}
\ddt u_h-\bigtriangleup u_h=f\qquad \mathrm{in  }\; \X\times I
\end{equation}
in a weak sense. 

The goal is to use regularity properties of parabolic equations to prove that $u_h$ is smooth in $\X\times I$.
We first need to prove some initial regularity for $u_h$.

\begin{lemma}
\label{time-derivative-u-C1}
For $n\geq 0$ we have
\begin{enumerate}
\item[(a)] $\ddtn u_h(\cdot,t)\in C^{\infty}(\X)$ for each $t$.
\item[(b)] $\ddtn u_h\in L^{2}(0,T,H^{1}(\X)).$
\end{enumerate}
\end{lemma}
\begin{proof}
Let us prove the case where $n=1$; the other cases are identical. Consider the first blow-up $\hat{\rho}:\widehat{\newN}\to \newN$ of $(\newN,\FN)$ along its minimal stratum, cf. Section \ref{new-section-blow-up}. Since $\varphi$ is a flow of isometries on $(\newN,\tilde{\mathrm{g}})/\FN$, by Proposition \ref{new-lemma-lifted-flow-isometry} there is an induced flow of isometries $\hat\varphi$ on $\widehat{\newN}/\widehat{\FN}$. Since the number of strata in $(\widehat{\newN},\widehat{\FN})$ is strictly smaller than the number of strata in $(\newN,\FN)$, then by induction Theorem \ref{main-theorem} holds, and $\hat{\varphi}$ is smooth. Therefore the conditions of Proposition \ref{new-B-are-X-closed} are met, and $\varphi^{*}(\bar{Y}\cdot h)$ is a blow-up function as well.
Since
\begin{equation*}
\ddt u_h= \ddt\varphi^*h=\varphi^{*}(\bar{Y}\cdot h)
\end{equation*}
then by Proposition, \ref{new-lemma-tecnhical} $\ddt u_h(\cdot,t)$ is $C^{1}(\X)$ for each $t$.
The above equation also implies that $\ddt u_h$ is continuous. 

Note that in the regular stratum 
$$ \bigtriangleup \left(\ddt u_h\right)=\ddt\left(\bigtriangleup u_h\right)  \stackrel{\eqref{eq-weak-laplacianterm}}{=}\ddt u_{\bigtriangleup h} $$
Since   $\ddt u_h(\cdot,t)$ and  $\ddt u_{\bigtriangleup h}(\cdot,t) \in C^{1}(\X)$ we can apply the  same argument as in \cite{AlexRadeschi} to infer that the following equation holds weakly:
\begin{equation} 
\label{eq-laplacian-comutate-devivativetime}
 \bigtriangleup \left(\ddt u_h\right)=\ddt u_{\bigtriangleup h }\qquad\textrm{in } \X
\end{equation}
From regularity theory of solutions of  elliptic partial differential equations \cite{Evans} we conclude that $\ddt u_h(\cdot,t)$ lies in the Sobolev space $H^{3}(\X).$ Applying the argument successively, we obtain $\ddt u_h(\cdot,t)\in C^\infty(\X)$ and this concludes the proof of part (a).

Now since $\bar{Y}\cdot h\in \mathcal{B}$ and $\varphi_{t}$ preserves the geodesics  orthogonal to the minimal stratum, we have that
the directional derivatives of $\ddt u$ exist. Moreover
\begin{equation*}
\left\|\nabla \left(\ddt u_h(\cdot,t)\right)\right\|=\left\| \nabla \phi(t)^{*}(\bar{Y}\cdot h)\right\|.
\end{equation*}
The term in the right-hand side of this equation is  a continuous function due to Lemma \ref{lemma-varhi-C1}. 

Therefore, since  $\ddt u_h$ and $\left\|\nabla \left(\ddt u_h\right)\right\|$ are both continuous, part (b) follows. 
\end{proof}

Let us recall the next result  that can be found in Evans \cite[Theorem 6, page 365]{Evans}; 
see the same reference for definitions and notations about Sobolev Spaces.


\begin{theorem}[Regularity of parabolic equations]
\label{regularity-parabolic-equation}
Assume that $g\in H^{2m+1}(\X)$, $\frac{d^{k} f}{d t^{k}}\in L^{2}(0,T,H^{2m-2k}(\X))$ ($k=0,\ldots,m$). Suppose also the following $m^{th}$-order compatibility condition holds:
\[\left\{\begin{array}{l}
g_{0}:=g\in H_{0}^{1}(\X)\\
g_{1}:=f(0)+ \bigtriangleup g_{0}\in H_{0}^{1}(\X)\\
\ldots\\
g_{m}:=\frac{d^{m-1}f}{d t^{m-1}}(0)+\bigtriangleup g_{m-1}\in H_{0}^{1}(\X) \end{array}\right. \]
Let $u\in L^{2}(0,T,H_{0}^{1}(\X))$ with $\frac{d u}{d t}\in L^{2}(0,T, H^{-1}(\X))$ be a weak solution of 
\begin{eqnarray*}
\frac{d}{d t} u-\bigtriangleup u&=&f\qquad \mathrm{in } \; \X\times (0,T]\\
u&=&0\qquad \mathrm{on } \; \partial \X\times[0,T]\\
u&=&g\qquad  \mathrm{on } \; \X\times \{t=0 \}
\end{eqnarray*}
Then 
$\frac{d^{k}u}{d t^{k}}\in L^{2}(0,T,H^{2m+2-2k}(\X)).$
\end{theorem}

We can now finally prove the proposition below.

\begin{proposition}
\label{proposition-varhi-smooth-onfiber}
$u_h$ is smooth on $\X\times (-\epsilon,\epsilon)$.
\end{proposition}

\begin{proof}
Here let us make the following convenient definitions:
\begin{enumerate}\itemsep0.5em
\item $u_h^{(n)}:=\ddtn u_h$.
\item $f^{(n)}:=-u_{\bigtriangleup h}^{(n)}+u_h^{(n+1)}$.
\item $g^{(n)}:=u_h^{(n)}(\cdot,0)$.
\end{enumerate}

By differentiating equation \eqref{eq-weak-parabolic-solution}, one gets the following family of parabolic equations, parametrized by $n$:
\[
\left\{
\begin{array}{r l}
\frac{d}{d t} u_h^{(n)}-\bigtriangleup u_h^{(n)}=f^{(n)}&\mathrm{in } \; \X\times [0,T]\\
&\\
u^{(n)}_h=g^{(n)}& \mathrm{on } \; \X\times \{t=0 \}
\end{array}
\right.
\]

By Lemma \ref{time-derivative-u-C1}, $f^{(n)}(\cdot,0)$ and $g^{(n)}$ are $C^{\infty}(\X)$ for any $n>0$, and in particular all the $m^{th}$-order compatibility conditions hold. Therefore the only condition that has to be checked is
\begin{equation}\label{eq-last-condition}
f^{(n)}\in L^2(0,\epsilon, H^{2m}(\X)).
\end{equation}
If one first applies Theorem \ref{regularity-parabolic-equation} with $m=0$, then the condition \eqref{eq-last-condition} holds (recall Lemma \ref{time-derivative-u-C1}), and from the Theorem \ref{regularity-parabolic-equation} one gets $u^{(n)}_h\in L^2(0,\epsilon,H^2(\X))$.
\\

Now suppose by induction (on $r$) that $u_h^{(n)}\in L^2(0,\epsilon,H^{2r}(\X))$ for every $n$. Then $f^{(n)}\in L^2(0,\epsilon,H^{2r}(\X))$ as well, and one can apply Theorem \ref{regularity-parabolic-equation} with $m=r$. Again the only condition to be checked is \eqref{eq-last-condition}, which holds, and by the Regularity Theorem we obtain, in particular, that $u^{(n)}_h\in L^2(0,\epsilon,H^{2r+2}(\X))$.

By induction, we obtain that
\[
u_h,\,\ddtn u_h\in L^2(0,\epsilon,H^m(\X))\qquad \forall m\in \N
\]
and from this it follows that $u_h\in C^{\infty}(\X\timesÊ[0,\epsilon])$.
\end{proof}

\medskip

\subsection{The second step}
\begin{proposition}
\label{proposition-varhi-smooth-onsubbundle}
$\varphi:(\newN/\FN)\times I\to \newN/\FN$ is smooth.
\end{proposition}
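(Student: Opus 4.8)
The plan is to bootstrap from the fiberwise smoothness obtained in Step~1 (Proposition~\ref{proposition-varhi-smooth-onfiber}) to smoothness on the whole reduction $\newN/\FN\times I$, using the group structure of the flow and the inverse function theorem for orbit spaces of \cite[page~45]{Schwarz}. First I would set up coordinates adapted to the submersion $\Projection_Y:\newN\to Y$: fixing the base point $q_0\in Y$ and the fiber $X:=\newN_{q_0}$, I would use the flow $\tilde\varphi_t$ on $Y$ (its lift to $Y$, cf.\ \cite{Swartz}) to identify a neighborhood of $q_0$ in $Y$ with an interval, so that $\newN$ is, near $X$, diffeomorphic to $X\times Y$ with $\Projection_Y$ the projection to the second factor. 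Recall from Lemma~\ref{lemma-thegroupG} that $\FN$ is the orbit foliation of the fixed group $G=K_q$ on $\newN$, acting fiberwise, so a smooth $\FN$-basic function $h$ on $\newN$ restricts to a smooth $G$-basic function on each fiber, and $\newN/\FN\cong (X/\FN)\times Y$ as a stratified space with smooth structure.

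The key point is to write the flow near $X^*=X/\FN$ as a composition that makes the dependence on the transverse variable $y\in Y$ manifest. Since $\varphi$ is a one-parameter group and $Y^*$ is an integral curve, for a point $(x^*,y)$ with $y=\tilde\varphi_s(q_0)$ one has $\varphi\big((x^*,y),t\big)=\varphi\big(\varphi((x^*,q_0),s),t\big)=\varphi\big((x^*,q_0),s+t\big)$ on the curve through $q_0$, and more generally the isometry $\varphi_t$ carries the fiber $\newN_y/\FN$ to the fiber $\newN_{\tilde\varphi_t(y)}/\FN$. Thus it suffices to understand $\varphi$ on fibers over a full neighborhood of $q_0$ in $Y$, and Proposition~\ref{proposition-varhi-smooth-onfiber} together with the group law gives smoothness of $(x,t)\mapsto \varphi^*h(x,\tilde\varphi_s(q_0),t)$ for each fixed $s$; what remains is joint smoothness in $(x,y,t)$. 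Here I would invoke \cite[page~45]{Schwarz}: the map $\Phi:\newN/\FN\times I\to \newN/\FN\times Y^*$, $\Phi(p^*,t)=(\varphi(p^*,t),\Projection_{Y^*}(p^*))$, has a ``vertical'' part which, restricted to each fiber over $Y^*$, is the smooth family of isometries already controlled, and a ``horizontal'' part which is just the smooth submetry $\Projection_{Y^*}$; the inverse function theorem for orbit spaces then upgrades the separate smoothness in the fiber and base directions to smoothness of the composite, because the flow respects the product decomposition $\newN/\FN\cong X^*\times Y^*$ up to the isometries $\varphi_t$. Concretely, for a smooth $\FN$-basic function $h$, writing $\varphi^*h(x^*,y,t)$ and using that $y=\tilde\varphi_s(q_0)$ depends smoothly on $s$ and that $\varphi^*h(x^*,\tilde\varphi_s(q_0),t)=\varphi^*h(x^*,q_0,s+t)$ reduces the claim to Step~1.

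The main obstacle I anticipate is the last reduction: showing that the identity $\varphi^*h(\cdot,y,t)$, expressed through the group law in terms of the fiberwise-smooth function on $X$, actually assembles into a smooth function of the transverse variable $y$ as well — i.e.\ that differentiating in the $Y$-direction does not cost regularity. This is where one genuinely needs the inverse function theorem on orbit spaces rather than a naive pullback argument, since $\newN/\FN$ is not a manifold and one cannot simply say ``a function smooth in each variable separately is smooth''. The resolution is that the $Y$-direction is transverse to all the singular strata (the fibers $\newN_y$ carry the singular behaviour, and moving in $Y$ is an honest smooth parameter by Proposition~\ref{lemma1-proposition-vectorfieldY-on-N} and the construction of $\mathrm{g}_\newN$ in Proposition~\ref{lemma-submersion-N}, for which $\Projection_Y$ is a Riemannian submersion). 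Hence the only delicate regularity is the one already handled in the fiber, and the extension to $\newN/\FN\times I$ follows. Finally, by Remark~\ref{remark-reduction} the inclusion $\newN/\FN\hookrightarrow B_\e\satsub/\F$ is smooth and every smooth $\FN$-basic function extends, so smoothness of $\varphi$ on $\newN/\FN\times I$ yields smoothness of the original flow on $M/\F\times I$ near $q^*$; as $q$ was an arbitrary point of the minimal stratum and the induction hypothesis covers the complement, Theorem~\ref{main-theorem} follows. \qed
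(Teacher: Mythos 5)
Your overall strategy (Step 1 plus the group law plus the inverse function theorem on orbit spaces, then Remark \ref{remark-reduction} and the induction on strata) is the right one, but the concrete reduction you propose has a genuine gap. The identity $\varphi^*h(x^*,\tilde\varphi_s(q_0),t)=\varphi^*h(x^*,q_0,s+t)$, on which your final reduction to Step 1 rests, presupposes that, under the product identification $\newN/\FN\cong (X/\FN)\times Y$ coming from $\Projection_Y$, the isometry $\varphi_s$ sends the point $(x^*,q_0)$ to $(x^*,\tilde\varphi_s(q_0))$, i.e.\ that the flow acts trivially in the fiber direction with respect to this trivialization. That is false in general: $\varphi_s$ only carries the fiber $X/\FN$ isometrically onto $\newN_{\tilde\varphi_s(q_0)}/\FN$, and this isometry need not be the identity of the trivialization. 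For instance, let $K$ be a finite rotation group acting on the first factor of $\mathbb{R}^{2}\times\mathbb{R}$ and let $\varphi$ be the quotient of a screw motion: $Y^*$ is the singular axis and is an integral curve, yet $\varphi_s$ rotates the cone fibers, so $h(\varphi_t(x^*,s))=h(R_{\omega t}x^*,s+t)$ while $h(\varphi_{s+t}(x^*,0))=h(R_{\omega(s+t)}x^*,s+t)$, and these disagree for basic $h$ that are not rotation invariant. (The proposition is of course still true there; it is your intermediate identity that fails.) Relatedly, the map $\Phi(p^*,t)=(\varphi(p^*,t),\Projection_{Y^*}(p^*))$ to which you want to apply the inverse function theorem has $\varphi$ itself as a component, so its smoothness cannot be assumed at that stage; as written, that application is circular.

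The way to close the gap --- and it is what the paper does --- is to parameterize a neighborhood in $\newN/\FN$ by the flow itself rather than by $\Projection_Y$. The map $\psi:=\varphi|_{X/\FN\times I}:X/\FN\times I\to\newN/\FN$ is smooth by Proposition \ref{proposition-varhi-smooth-onfiber}, and since $Y^*$ is an integral curve of $\varphi$ it is invertible onto its image; the inverse function theorem on orbit spaces \cite[page 45]{Schwarz} is applied to $\psi$ (a map already known to be smooth), giving that $\psi^{-1}$ is smooth. The group law then yields the exact factorization $\varphi(z,t)=\psi\bigl(\Projection_{1}\circ\psi^{-1}(z),\,\Projection_{2}\circ\psi^{-1}(z)+t\bigr)$ for $z=\psi(\tilde x,s)$, exhibiting $\varphi$ as a composition of smooth maps; no flow-equivariance of the product splitting $\newN/\FN\cong(X/\FN)\times Y$ is ever needed. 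Your closing step, passing from $\newN/\FN$ back to $M/\F$ and invoking the induction hypothesis, is fine once this is repaired.
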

\begin{proof}
If $Y$ is a point, then the result was already proved in the previous proposition. Let us assume that $Y$ is not a point.

We know from Proposition \ref{proposition-varhi-smooth-onfiber} that the restriction
$$\psi:=\varphi|_{\X/\FN\times I}:\X/\FN\times I\longrightarrow \newN/\FN$$
 is smooth, and therefore we can apply the inverse function theorem on orbit space (see \cite[page 45]{Schwarz}) to conclude that $\psi^{-1}$ is smooth.
 Note that,  for each fixed $s$  the function $\Projection_{N/\FN}\circ\psi(\cdot,s)$ is a constant $k(s)$. 
We claim that the diagram below commutes, and hence $\varphi$ is a composition of smooth maps and therefore  is a smooth map.
\begin{displaymath}
\xymatrix@+30pt{
\newN/\FN\times I \ar[r]^{\varphi} \ar[d]_{\psi^{-1}\times \mathrm{Id}} & \newN/\FN\ar[d]^{\psi^{-1}} \\
(\X/\FN\times I)\times I\ar[r]_{(\Projection_{1},\Projection_{2}+\Projection_{3})} & (\X/\FN\times I)
}
\end{displaymath} 
In fact, set $z =\psi(x^{*},s)$. Then we have
\begin{eqnarray*}
\varphi(z,t)&=& \varphi_{t}(z)\\
              &=& \varphi_{t}(\psi(x^{*},s))\\ 
              &=& \varphi(x^{*},s+t)\\
&=&\psi(\Projection_{1}\circ \psi^{-1}(z), \Projection_{2}\circ \psi^{-1}(z)+ t).
\end{eqnarray*}
This proves the commutativity of the diagram.
The smoothness of the arrows of the diagram can be proved using the smoothness of $\psi^{-1}$ and Schwarz's Lemma \cite{Schwarz-invariantfunctions}; see
also  comments in the beginning  of proof of the main Theorem \cite[page 65]{Schwarz-invariantfunctions}.
 
\end{proof}

\subsection{Proof of Corollary \ref{corollary}}
\label{section-corollary}
Given a single flow of isometries $\varphi: M/K\times I \to M/K$, around each point $q^*$ of $M/K$ we consider, as in the proof of Theorem 1.1, a neighbourhood $Y^*$ of $q^*$ in its $\varphi$-orbit. The preimage $\satsub=\pi_\F^{-1}(Y^*)$ is a regularly saturated submanifold of $M$, and Lemma \ref{lemma-thegroupG}  implies that $B_\e(\satsub) /\F=\newN/\FN=\newN/G$, where $G$ is compact. Therefore
 one can apply Schwarz's results \cite{Schwarz-invariantfunctions} on smooth maps on the orbit spaces and prove that the map $\newN/G\times I^2\to \newN/G\times I$, $(x^*,s,t)\mapsto (\varphi_s(x^*),t)$ is smooth. This implies that the function $M/K\times I^2\to I$, defined as before, is smooth as well. In particular, for any two smooth flows $\varphi,\psi:M/K\times I^2\to M/K$ the composition $M/K\times I^2 \to M/K$, $(x^{*},s,t)\mapsto \varphi_s(\psi_t(x^{*}))$ is smooth; see \cite[p.64]{Schwarz-invariantfunctions}. 

Consider now an isometric action $\mu:H\times (M/K)\to M/K$.  
Let $v_1,\ldots, v_h$, $h=\dim H$, be a basis of the Lie algebra of $H$. The above consideration and 
Theorem \ref{main-theorem} imply that the map
\begin{align*}
\psi:M/K\times I^{h}&\to M/K,\\
(x^*,t_{1},\ldots,t_{h})&\mapsto \mu(\exp(t_{1}v_{1})\cdots\exp(t_{h}v_{h}), x^{*})
\end{align*}
is smooth. This means that the action
is smooth on a neigborhood of $(e,x^{*})$ where $e$ is the identity of $H$ and $x^{*}$ is an arbitrary point of $M/K$.

In order to check that the action is smooth on a neighborhood of a generic point $(h,x^{*})$, it suffices to prove that the composition
\[
(t_1,\ldots,t_{n},x^{*})\mapsto  \psi(t_{1},\ldots,t_{n}, \mu(h, x^{*})) 
\]
 is smooth on a neigborhood of $(0,x^{*})$.

Note that since $H$ is connected, there exists a curve $\varphi_t$ of isometries such that $\varphi_t=h$ and $\varphi_0=e$. Therefore
\cite{AlexRadeschi} implies that the map  $x^{*}\mapsto \mu(h, x^{*})$ is smooth and the result follows from the above discussion.


\section{Molino's conjeture: proof of Theorem \ref{theorem-molinos-conjecture-orbitlike} }
\label{sec-proof-molino}

Let $(M,\F)$ be a singular Riemannian Foliation, and let $\overline{\F}$ be the partition of $M$ by the closures $\overline{L}$ of leaves $L\in\F$. In Molino \cite[Theorem 6.2, page 214]{Molino} (cf. \cite[Appendix D]{Molino} when $M$ non compact)  it is proved that each closure $\overline{L}$ is a closed submanifold, and that the partition $\overline{\F}=\{\overline{L}\}_{L\in\F}$ is a transnormal system, i.e., the leaves of $\overline{\F}$ are locally equidistant (cf. Definition \ref{definition-srf}). In fact, the equifocality of $\F$ (cf. \cite{AlexToeben2}) implies that plaques of $\F$ are equidistant to any fixed plaque of $\overline{L}_{q}$ and so are the plaques of $\overline{\F}$; see a similar argument in  \cite[Proposition 2.13]{Alex6}.
\\

In what follows we will prove that this partition is a smooth singular foliation.

\begin{proposition}
Let $\F$ be a orbit-like foliation. Then  
$(M,\overline{\F})$ is a singular  foliation.
\end{proposition}
\begin{proof}
Let us fix a point $q\in M$ and consider $\Sigma$ the stratum containing the point $q$. Consider a slice $D$ of the (regular) foliation $(\Sigma, \F_{\Sigma}).$
Finally let $v\in \nu_qL_q \cap T_{q}\overline{L_q}$. We want to prove that there exists a vector field $\vec{Y}$ around $q$, tangent to the leaves of $\overline{\F}$ so that $\vec{Y}(q)=v$. 


Let $(\newN,\FNo)$ be the local reduction of $(M,\F)$ along the slice $D$; recall Section \ref{new-subsection-local-reduction}. In what follows we will only make use of the metric $\mathrm{\tilde{g}}$ on $\newN$ introduced in Proposition \ref{new-lemma-same-transverse-metricN}, so we will give it as understood.
Since $\F$ is orbit-like,  $(\newN_q, \FNo)$ is homogeneous  given by the orbits of some group $G$; see Proposition \ref{new-corollary-Forbitlike-homogeneous}.

Recall that there is a pseudogroup $\H$ of local isometries of $\newN/\FNo$ that describes how the leaves around $\Sigma$ intersect $\newN$ (cf.  Section \ref{new-subsection-local-reduction}).

By applying Molino's theorem  to the regular foliation  $\F|_{\Sigma}$, ${\pi_{\FN}}_{*}v$ is tangent to the orbit of the closure of $\mathcal{H}$; see \cite[Theorem 5.1, page 156]{Molino} and  \cite[Section 3.4, page 287]{Molino}.

Let $(\widehat{\newN}, \widehat{\FNo})$ be the desingularization of $(\newN, \FNo)$ (cf. Remark \ref{remark-new-desingularization}), with projections 
$$
\hat{\rho}:\widehat{\newN}\to \newN \qquad\qquad \hat{\rho}_\#:\widehat{\newN}/\widehat{\FNo}\to \newN/\FNo.
$$
For the sake of simplicity, here we are using the notation $\widehat{\newN}$ to denote the desingularized space and not just the first blow-up along the minimal stratum. 
As in Proposition \ref{new-lemma-lifted-flow-isometry} 
we can lift any isometry  $\tau$ (recall equation  \eqref{new-definition-tau}) to an isometry $\hat{\tau}$ of the orbifold 
$(\widehat{\newN},\widehat{\tilde{\mathrm{g}}}) /\widehat{\FN}$; see also Remark \ref{remark-blowup-propositions-moregeneral} applied to small relatively compact neighborhoods. Let $\hat{\mathcal{H}}$ be pseudogroup generated by all isometries $\hat{\tau}$ constructed in this way, and let $\overline{\hat{\mathcal{H}}}$ its closure.  
By Salem \cite[Appendix D]{Molino} the closure $\overline{\hat{\mathcal{H}}}$ is a Lie pseudogroup. 
\\

We claim that we can find 
a Killing vector field $\vec{\hat{Y}}$ in the orbifold $\widehat{\newN}/\widehat{\FNo}$ with flow
 $\hat{\varphi}_{t}\in \overline{\hat{\mathcal{H}}}$, that projects to a flow $\varphi_t$ on $\newN/\FNo$ and such that $\ddt \varphi_t(q)={\pi_{\FNo}}_{*}v$.

Indeed, there is an et\'ale morphism
$$
\hat{\mathcal{H}} \to \mathcal{H}.
$$

Set $\widehat{D}:=\hat{\rho}^{-1}(D)$. Consider a point  $\hat{q}\in \widehat{\newN}/\widehat{\FNo}$ with $\hat{\rho}_\#(\hat{q})=q$. This point $\hat{q}$ can be chosen so that the
restriction of $\hat{\rho}_\#$ to a neighborhood of $\hat{q}$ coincides with the first blow-up. \textsc{}
We have that the restriction of $(\hat{\rho}_{\#})|_{\widehat{D}}$ to a neighborhood of the orbit $\overline{\hat{\mathcal{H}}(\hat{q})}$ is a submersion. Note that 
the restriction of $\hat{\rho}_\#$ to the orbit $\overline{\hat{\mathcal{H}}(\hat{q})}$ is a surjective smooth map
\[
\hat{\rho}_{\hat{q}}:\overline{\hat{\mathcal{H}}}(\hat{q})\to \overline{\mathcal{H}}(q).
\]
 Moreover, if the differential has rank $d$ at some $\hat{p}\in \overline{\hat{\mathcal{H}}}(\hat{q})$, it will be $d$ on the whole orbit $\hat{\mathcal{H}}(\hat{p})\In \overline{\hat{\mathcal{H}}}(\hat{q})$, and this implies that the differential of $\hat{\rho}_{\hat{q}}$ is everywhere constant, and hence $\hat{\rho}_{\hat{q}}$ is a submersion. In particular, given ${\pi_{\FNo}}_{*}v$ tangent to $\overline{\mathcal{H}}(q)$, there exists a vector $\hat{v}$ tangent to $\overline{\hat{\mathcal{H}}}(\hat{q})$ that projects to ${\pi_{\FNo}}_{*}v$. We can now take a flow $\hat{\varphi}\in \overline{\hat{\mathcal{H}}}$ with vector field $\vec{\hat{Y}}$ such that $\vec{\hat{Y}}(\hat{q})=\hat{v}$. Due to the construction of $\hat{\mathcal{H}}$, it makes sense to project $\hat{\varphi}$ to a flow $\varphi$ in $\newN/\FNo$, and this proves the claim.
\\

Since the foliation $(\newN,\FNo)$ is homogenous, it follows from Theorem \ref{main-theorem} that $\varphi$ is smooth.
From  Schwarz \cite[Corollary 2.4]{Schwarz} we can lift $\varphi$ and produce the smooth vector field $\vec{Y}$ on $\newN$ tangent to $v$. Finally, using the flow of vector fields tangent to the leaves, one can extend the vector field $\vec{Y}$ on $\newN$ to a vector field on an neighborhood of  $q$ in $M$ that is tangent to the leaves of $\F$ and this concludes the proof.

\end{proof}

\section{New tools}\label{new-tools}
In the following sections we define the main new technical tools used in this paper. These are of independent interest and we hope they might be used in other contexts, from which the decision of devoting a section to them is made.

The first tool is the blow-up of a singular Riemannian foliation. This object has already been studied in \cite{Alex6}, but here we will further the analysis and in particular we will define \emph{blow-up functions}. The second tool is the concept of \emph{local reduction}.

The reader mainly interested in the proof of Theorems \ref{main-theorem} and  
\ref{theorem-molinos-conjecture-orbitlike}  can skip the proofs presented in Sections \ref{section-blow-up}, \ref{sec-blow-upfunctions} and \ref{subsection-local-reduction}.

\subsection{Blow-up}
\label{section-blow-up}

Let $M$ be a complete manifold, and $(M,\F)$ a SRF with closed leaves. As in the classical theory of isometric actions, it is possible to construct, via compositions of projective blow-up's, a surjective map $\rho_{\epsilon} : M_{\epsilon}\to  M$ with the following properties:
\begin{enumerate}
\item $M_{\epsilon}$ is a smooth complete Riemannian manifold foliated by a regular Riemannian foliation $\F_{\epsilon}$.
\item The map  $\rho_{\epsilon}$ sends leaves of $\F_{\epsilon}$ to leaves of $\F$.
\end{enumerate}
This map is called a \emph{desingularization map}. If $M/\F$ is compact, then for each small $\epsilon>0$ one can choose $M_{\epsilon}$ and $\rho_{\epsilon}$ so that $d_{GH}(M_{\epsilon}/ \F_{\epsilon}, M/\F)<\epsilon$ where $d_{GH}$ is the Gromov-Hausdorff distance; see  \cite{Alex6}. 

In this section we briefly recall the construction of the first blow-up along the minimal stratum (see \cite{MolinoBlowup, Alex6,AlexBriquetToeben})   and present  Proposition \ref{lemma-lifted-flow-isometry},
the main result of this section.

Throughout this section, $\Sigma$ will denote  a relatively compact  neighborhood  in the minimal stratum of $M$. 
If the minimal stratum is compact, one can, equivalently, take $\Sigma$ to be the whole minimal stratum.

Following  a  procedure analogous to the blow-up of isometric actions one has the next lemma.

\begin{lemma}
\label{lemma-projectiveblow-usp}
Let $B:=\tub_{r}(\minimalstratum)$ be a small neighborhood of $\minimalstratum$. Then
\begin{enumerate}
\item[(a)] $\widehat{B}:=\{(x,[\xi])\in B \times \mathbb{P}(\nu \minimalstratum)| x= \exp^{\perp}(t\xi)\ \mbox{for}\ |t|<r\}$ is a smooth manifold (called blow-up  of $B$  along $\minimalstratum$) and the projection or blow-up map $\hat{\rho}:\widehat{B}\rightarrow B$, defined as $\hat{\rho}(x,[\xi])=x$ is also smooth.
\item[(b)]$\widehat{\minimalstratum}:=\hat{\rho}^{-1}(\minimalstratum)=\{(\hat{\pi}([\xi]),[\xi])\in \widehat{B}\}=\mathbb{P}(\nu\minimalstratum)$, where $\hat{\pi}: \mathbb{P}(\nu \minimalstratum)\rightarrow \minimalstratum$ is the canonical projection.
\item[(c)] There exists a  singular foliation $\widehat{\F}$ on $\widehat{B}$ so that  $\hat{\rho}: (\widehat{B}-\hat{\minimalstratum}, \hat{\F})\rightarrow (B-\minimalstratum, \F)$ is a foliated diffeomorphism. In addition if $\F$ is homogeneous then the leaves of $\widehat{\F}$ are also homogeneous. 
\end{enumerate}
\end{lemma}

Getting the right metric on $\widehat{B}$ is a bit more complicated.

\begin{lemma}[\cite{Alex6}]
There exists a metric $\hat{\mathrm{g}}$ on $\widehat{B}$ such that $\widehat{\F}$ is a SRF.
\end{lemma}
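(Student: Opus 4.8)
The statement to prove is the final lemma in the excerpt: the existence of a metric $\hat{\mathrm{g}}$ on the blow-up $\widehat{B}$ making $\widehat{\F}$ a singular Riemannian foliation (in fact a regular one, since the blow-up is one step towards desingularization). This is attributed to \cite{Alex6}, so my proof plan is essentially a reconstruction of that argument.

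\textbf{Overall approach.} The plan is to build $\hat{\mathrm{g}}$ by averaging/gluing together two natural metrics: one coming from the product-like structure near the exceptional divisor $\widehat{\Sigma} = \mathbb{P}(\nu\Sigma)$, and the pulled-back metric $\hat{\rho}^*\mathrm{g}$ away from it. Away from $\widehat{\Sigma}$, the map $\hat{\rho}$ is a foliated diffeomorphism onto $(B-\Sigma,\F)$ by Lemma \ref{lemma-projectiveblow-usp}(c), so $\hat{\rho}^*\mathrm{g}$ already makes $\widehat{\F}$ a SRF there; the only issue is that $\hat{\rho}^*\mathrm{g}$ degenerates along $\widehat{\Sigma}$. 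Near $\widehat{\Sigma}$, one uses the identification of a tubular neighborhood of $\widehat{\Sigma}$ with (an open subset of) the tautological bundle over $\mathbb{P}(\nu\Sigma)$, and there is a natural metric there: pull back the normal-bundle metric of $\Sigma$ via the tautological line bundle projection and add the round-sphere/projective directions, in a way compatible with the homothety action on the normal slices. The key point is that the infinitesimal foliation $\F_p$ on each normal slice $\nu_p L_p$ descends to a foliation on the projectivization $\mathbb{P}(\nu_p L_p)$ that is Riemannian for the Fubini--Study-type metric, because $\F_p$ is invariant under the homothety $\mathbb{R}^+$-action and is itself Riemannian (Molino's Lemma 6.2 and Proposition 6.5, cited in the excerpt).

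\textbf{Key steps, in order.} (1) Fix the decomposition: work on a tubular neighborhood $\tub_\delta(\widehat\Sigma)\subset\widehat B$ and its complement, with a smooth partition of unity $\{\chi_1,\chi_2\}$ subordinate to this cover, where $\chi_i$ may be taken basic (constant on leaves of $\widehat\F$) by using an $\F$-invariant distance-to-$\Sigma$ function on $B$ and pulling it back — this is legitimate since $\dist(\cdot,\Sigma)$ is basic for $\F$. (2) On the complement, take $\mathrm{g}_2 := \hat\rho^*\mathrm{g}$; it is a bona fide metric there and $\widehat\F$-Riemannian since $\hat\rho$ is a foliated isometry onto its image. (3) Near $\widehat\Sigma$, construct $\mathrm{g}_1$ explicitly: using the blow-up coordinates $(t,[\xi], y)$ where $y\in\Sigma$, $[\xi]\in\mathbb{P}(\nu_y\Sigma)$, $t\in(-r,r)$ with $x=\exp^\perp_y(t\xi)$, set $\mathrm{g}_1 = \pi^*\mathrm{g}_\Sigma + dt^2 + t^2\,\mathrm{g}_{\mathbb{P}(\nu\Sigma)}^{FS}$ suitably corrected by O'Neill/submersion terms so that it is smooth across $t=0$ — the factor $t^2$ is exactly what the projective blow-up produces and it makes $\mathrm{g}_1$ smooth and nondegenerate on all of $\widehat B$ near $\widehat\Sigma$. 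Verify $\widehat\F$ is Riemannian for $\mathrm{g}_1$: leaves of $\widehat\F$ near $\widehat\Sigma$ are, in these coordinates, (leaves of $\F$ in $\Sigma$) $\times$ (cone over the projectivized infinitesimal foliation), and geodesics perpendicular to a leaf decompose accordingly; perpendicularity is preserved because $\F|_\Sigma$ is Riemannian and the projectivized infinitesimal foliation is Riemannian for $\mathrm{g}_{\mathbb{P}}^{FS}$. (4) Glue: $\hat{\mathrm{g}} := \chi_1\mathrm{g}_1 + \chi_2\mathrm{g}_2$. Since a convex combination of two metrics each making $\widehat\F$ Riemannian, with basic coefficients, again makes $\widehat\F$ Riemannian (the transnormality condition (b) is convex in the metric once the horizontal distribution is fixed — more precisely, use that both metrics induce the same notion of "horizontal = perpendicular to leaves" up to the basic rescaling, and that the characterization of SRF via "the restriction of the metric to any normal slice is basic-constant along... "), $\hat{\mathrm{g}}$ works. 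One must double-check smoothness of $\hat{\mathrm{g}}$ at points of $\widehat\Sigma$: there $\chi_2\equiv 0$ in a neighborhood, so $\hat{\mathrm{g}}=\mathrm{g}_1$ locally, which is already smooth.

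\textbf{Main obstacle.} The delicate point is verifying that $\widehat\F$ is transnormal for the glued metric at and near the exceptional divisor $\widehat\Sigma$, i.e. that the $t^2$-rescaled projective metric genuinely makes the lifted foliation Riemannian. The subtlety is twofold: first, that the infinitesimal foliation $\F_p$, which lives on the \emph{linear} space $\nu_pL_p$ and is homothety-invariant, descends to a \emph{Riemannian} foliation on the projective space $\mathbb{P}(\nu_pL_p)$ — this needs the fact that a homothety-invariant SRF on Euclidean space restricts to a Riemannian foliation on the unit sphere (Molino) which is then invariant under the antipodal map, hence descends to $\mathbb{P}$; and second, that the global blow-up foliation $\widehat\F$ (not just its restriction to a fiber) is Riemannian, which requires controlling how the normal directions to $\Sigma$ and the tangent directions along $\Sigma$ interact — here one invokes that $\F|_\Sigma$ is already a SRF of $\Sigma$ (a minimal stratum is a closed totally geodesic submanifold and $\F$ restricts to it) and that the two "pieces" of a horizontal geodesic stay horizontal by an O'Neill-type computation. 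A clean way to organize this is: reduce to checking transnormality on each normal slice $\widehat{S_p} = \hat\rho^{-1}(S_p)$ with its induced metric from $\mathrm{g}_1$, show that slice is (up to the blow-up in the single normal direction) the projectivization of $(\nu_pL_p,\F_p)$, invoke the Euclidean/spherical/projective descent, and then extend across slices using a Fermi-coordinate argument along $\Sigma$. Since the lemma is cited from \cite{Alex6}, in the paper itself it suffices to record this plan and refer there for the computation; I would present the construction $\hat{\mathrm{g}}=\chi_1\mathrm{g}_1+\chi_2\mathrm{g}_2$ explicitly and defer the transnormality verification near $\widehat\Sigma$ to \cite{Alex6, MolinoBlowup}.
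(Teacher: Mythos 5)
There are two genuine gaps here, and they sit exactly at the point your proof must do real work. First, your explicit model metric near the exceptional divisor, $\mathrm{g}_1=\pi^*\mathrm{g}_\Sigma+dt^2+t^2\,\mathrm{g}^{FS}_{\mathbb{P}(\nu\Sigma)}$, is \emph{degenerate} along $\widehat{\Sigma}=\{t=0\}$: on the blow-up the directions tangent to the fibers $\mathbb{P}(\nu_y\Sigma)$ are honest tangent directions at points of $\widehat{\Sigma}$, and the factor $t^2$ kills them there. The metric $dt^2+t^2\,\mathrm{g}_{\mathrm{sphere}}$ is what the pullback $\hat{\rho}^*\mathrm{g}$ already looks like, and its degeneracy at $t=0$ is precisely the problem the lemma is meant to solve; writing $t^2\,\mathrm{g}^{FS}$ "suitably corrected by O'Neill terms" does not remove it, since no bounded correction restores nondegeneracy. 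The construction in \cite{Alex6} (recalled in the paper right after the lemma) replaces this $t^2$ by a factor bounded away from zero: one rescales the metric on $B-\Sigma$ only in the directions $\Sl^1$ tangent to the normal spheres by the basic factor $r^2/\|\xi\|^2$, so that after pulling back by $\hat{\rho}$ the spherical directions have size comparable to $r^2$ rather than $\|\xi\|^2$, and the pullback extends smoothly and nondegenerately across $\widehat{\Sigma}$. Your formula, as written, would have to be changed in exactly this way, and that change is the heart of the proof rather than a detail to defer.

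Second, the gluing step is not justified. A convex combination $\chi_1\mathrm{g}_1+\chi_2\mathrm{g}_2$ of two metrics for which $\widehat{\F}$ is transnormal need not be transnormal, even with basic coefficients: transnormality (equivalently, the bundle-like condition in the regular part) involves the orthogonal complement of the leaves, and the two metrics in your construction ($\hat{\rho}^*\mathrm{g}$ and the model near $\widehat{\Sigma}$) do not a priori have the same horizontal distribution, so the "convexity once the horizontal distribution is fixed" argument does not apply. This is why the paper's route avoids gluing altogether: it first replaces $\mathrm{g}$ by a metric $\tilde{\mathrm{g}}$ adapted to the slice distribution $\Sl$ (same leaf distances, plaque normals contained in $\Sl$, radial geodesics preserved), then modifies $\tilde{\mathrm{g}}$ only conformally in the fixed subbundle $\Sl^1$ by a basic function, keeping the orthogonal splitting $\Sl^{\perp}\oplus\Sl^1\oplus\Sl^2\oplus\Sl^3$ unchanged, and finally takes $\hat{\mathrm{g}}=\hat{\rho}^*\hat{\mathrm{g}}^B$. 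If you want to keep a gluing-based argument, you would need to show that both metrics are adapted to the same splitting (e.g.\ both built from $\tilde{\mathrm{g}}$ and $\Sl$) so that the interpolation only changes basic conformal factors on fixed foliation-compatible subbundles; as the proposal stands, both the nondegeneracy at $\widehat{\Sigma}$ and the transnormality of the glued metric are unproven.
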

\begin{proof}
Let us briefly recall the construction of this metric, that will be important in the proof of Proposition \ref{lemma-lifted-flow-isometry}. 

Consider the smooth distribution $\Sl$ on $B$ defined as  
$\Sl_{\exp(\xi)}:=T_{\exp(\xi)}S_{q}$ where $\xi\in \nu_{q}\minimalstratum$ and $S_{q}$ is a slice of $L_{q}$  at $q$  with respect to the original metric $g$.

First we find a metric $\tilde{g}$ with the following properties:

\begin{enumerate}
\item[(a)] The distance between the leaves of $\F$ on $B$ with respect to $\tilde{\mathrm{g}}$ and to respect to $\mathrm{g}$ are the same.
\item[(b)] The normal space  of each plaque of $\F|_{B}$ (with respect to $\tilde{\mathrm{g}}$) is contained in $\Sl.$ In fact those spaces are the orthogonal projection (with respect to $\mathrm{g}$) of the normal spaces (with respect to $\mathrm{g}$) of $\F|_{B}.$
\item[(c)] If a curve $\gamma$ is a unit speed geodesic segment orthogonal to $\minimalstratum$ with respect to the original metric $\mathrm{g}$, then
$\gamma$ is a unit speed geodesic segment orthogonal to $\minimalstratum$ with respect to the new metric $\tilde{\mathrm{g}}$.
\end{enumerate}

We now come to the second step of our construction, in which we change  the metric $\tilde{\mathrm{g}}$ in
some directions, getting a new metric $\hat{\mathrm{g}}^{B}$ on $B-\minimalstratum$.

First note that, for small $\xi\in \nu_{q}\minimalstratum,$ we can decompose $T_{\exp_{q}(\xi)}M$ as a direct sum of orthogonal subspaces (with respect to the metric $\tilde{g}$) 
\begin{equation}
\label{eq-decomposition-H}
T_{\exp_{q}(\xi)}M= \Sl_{\exp_{q}(\xi)}^{\perp}\oplus \Sl_{\exp_{q}(\xi)}^{1}\oplus \Sl_{\exp_{q}(\xi)}^{2}\oplus \Sl_{\exp_{q}(\xi)}^{3},
\end{equation}
where $\Sl_{\exp_{q}(\xi)}^{\perp}$ is orthogonal to $\Sl_{\exp_{q}(\xi)}$ and  $\Sl_{\exp_{q}(\xi)}^{i}\subset \Sl_{\exp_{q}(\xi)}$, i=1,2,3, are defined below:
\begin{enumerate}
\item $\Sl_{\exp_{q}(\xi)}^{1}$ is the tangent space of the normal sphere $\exp(\nu_q\Sigma)\cap \partial B_{	\|\xi\|}(q)$,
\item $\Sl_{\exp_{q}(\xi)}^{2}$ is the line generated by $\frac{d}{d t}\exp_{q}(t\xi)|_{t=1}$.
\item $\Sl_{\exp_{q}(\xi)}^{3}$ is the orthogonal complement of $\Sl_{\exp_{q}(\xi)}^{1}\oplus \Sl_{\exp_{q}(\xi)}^{2}$ in $\Sl_{\exp_{q}(\xi)}$.
\end{enumerate}

Now we define a new metric $\hat{\mathrm{g}}^{B}$ on
$B-\minimalstratum$ as follows:

\begin{equation}
\label{def-metrica-hatgM}
\hat{\mathrm{g}}^{B}_{\exp_{q}(\xi)}(Z,W):= \tilde{\mathrm{g}}(Z_{\perp},W_{\perp})+ \frac{r^{2}}{\|\xi\|^{2}} \tilde{\mathrm{g}}(Z_{1},W_{1})
                                +\tilde{\mathrm{g}}(Z_{2},W_{2})+  \tilde{g}(Z_{3},W_{3}),
\end{equation}
where $Z_{i},W_{i}\in \Sl_{\exp_{q}(\xi)}^{i}$ and  $Z_{\perp},W_{\perp}\in \Sl_{\exp_{q}(\xi)}^{\perp}.$

Finally we define the pullback metric $\hat{\mathrm{g}}:=\hat{\rho}^{*}\hat{\mathrm{g}}^{B}$.
\end{proof}

We have recalled the construction of the blow-up an $\F$-invariant neigbhoorhood $B$ along $\Sigma$. We have explained the case where $B=\tub_r(\Sigma)$ because we will only be  concerned with this kind of neighborhood $B$ and with this first blow-up $\hat{\rho}$.

\begin{remark}
\label{remark-glue}
For the sake of completeness let us explain the rest of the construction, e.g. when $M$ is compact.
 We  simply glue $\widehat{B}$ with a copy of $M-B$ and  construct the space
$\widehat{M}_{r}(\minimalstratum)$ and  the projection $\rho_{r}:\widehat{M}_{r}(\minimalstratum)\rightarrow M$.
A natural singular foliation  $\widehat{\F}_{r}$ is induced on $\widehat{M}_{r}(\minimalstratum)$ in analogy to the blow-up of isometric actions. 
To define the appropriate metric   $\widehat{\mathrm{g}}_{r}$ on  $\widehat{M}_{r}(\minimalstratum)$ consider  a partition of unity of $\widehat{M}_{r}(\minimalstratum)$ by two functions $\hat{f}$ and $\hat{h}$ such that
 \begin{enumerate}
 \item $\hat{f}=1$ in $\tub_{r/2}(\widehat{\minimalstratum})$  and $\hat{f}=0$ outside of $B$.
 \item $\hat{f}$ and $\hat{h}$ are constant on the cylinders  $\partial \tub_{\delta}(\widehat{\minimalstratum})$ for $ \delta<2 r.$
 \end{enumerate}
 Set $\hat{\mathrm{g}}_{r}:=\hat{f}\hat{\mathrm{g}}+\hat{h} \mathrm{g}_1$, where $\mathrm{g}_1$ is a metric that near to $\widehat{\minimalstratum}$ is $\tilde{\mathrm{g}}$
and far away is the original metric $\mathrm{g}$ so that $\F$ is a SRF with respect to $\mathrm{g}_1$. 
 The desingularization $\rho_{\epsilon}$ mentioned in the   beginning of this section  is then the composition of the blow-up's along the strata. 
\end{remark}

\begin{proposition}
\label{lemma-lifted-flow-isometry}
Each flow of isometries $\varphi:B/\F\times I\to B/\F$ can be lifted to a  flow of isometries 
$\hat{\varphi}: \widehat{B} /\widehat{\F} \times I \to \widehat{B} /\widehat{\F}.$ 
\end{proposition}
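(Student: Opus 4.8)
The plan is to produce $\hat\varphi$ by transporting $\varphi$ through the foliated diffeomorphism $\hat\rho$ on the complement of the exceptional divisor and then arguing that this partially defined flow extends continuously (hence, being a flow of isometries, smoothly in the appropriate sense) across $\widehat{\Sigma}$. First I would recall that, by Lemma \ref{lemma-projectiveblow-usp}(c), $\hat\rho$ restricts to a foliated diffeomorphism $\widehat{B}-\widehat{\Sigma}\to B-\Sigma$, and that the metric $\hat{\mathrm{g}}$ on $\widehat{B}$ is built so that distances between leaves are preserved along the $\Sl^{\perp}\oplus\Sl^{2}\oplus\Sl^{3}$ directions and only rescaled by $r/\|\xi\|$ along the sphere directions $\Sl^{1}$. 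The key observation is that this rescaling is precisely the conformal factor that makes $\hat\rho$ a \emph{submetry onto its image} after passing to leaf spaces away from $\Sigma$, and in fact an isometry $\widehat{B}/\widehat{\F}\to B/\F$ on the level of transverse metrics on the open dense stratum. Concretely, $\widehat{B}/\widehat{\F}$ decomposes, near $\widehat{\Sigma}$, as a (topological) product of the unit normal-sphere bundle quotient with the radial interval $[0,r)$; the map to $B/\F$ collapses $\{0\}\times(\text{sphere quotient})$ to $\Sigma$, and on $(0,r)$ it is a foliated isometry.

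The main steps, in order, would be: (i) define $\hat\varphi$ on $(\widehat{B}-\widehat{\Sigma})/\widehat{\F}\times I$ by $\hat\varphi(\hat p^\ast,t):=(\hat\rho|_{\widehat{B}-\widehat{\Sigma}})^{-1}\big(\varphi(\hat\rho(\hat p^\ast),t)\big)$, using that integral curves of $\varphi$ stay inside a single stratum (Definition \ref{definition-flow}(b)) so that the preimage is well defined; (ii) check that $\hat\varphi$ is a flow and consists of isometries of $(\widehat{B}-\widehat{\Sigma})/\widehat{\F}$, which is immediate from the corresponding property of $\varphi$ together with the fact that $\hat\rho$ is a foliated isometry on the transverse metric of the principal stratum; (iii) show $\hat\varphi$ extends continuously to all of $\widehat{B}/\widehat{\F}\times I$. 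For (iii) I would use that, because $\varphi$ is a flow of isometries, it preserves the distance to $\Sigma$ (the stratum being a metric invariant), hence it preserves each sphere bundle $\partial\tub_\delta(\Sigma)/\F$, and under $\hat\rho$ this corresponds to $\hat\varphi$ preserving each level $\{\delta\}\times(\text{sphere quotient})$; letting $\delta\to 0$ and invoking uniform continuity of $\varphi$ on compact sets, the maps $\hat\varphi(\cdot,t)$ extend to $\widehat{\Sigma}/\widehat{\F}\cong\mathbb{P}(\nu\Sigma)/\widehat{\F}$, giving a continuous flow; (iv) verify $\hat\varphi$ is a flow \emph{of isometries} of $(\widehat{B},\hat{\mathrm g})/\widehat{\F}$: the extended maps are isometries on the dense stratum $\widehat{B}-\widehat{\Sigma}$, hence isometries of its metric completion, which is $\widehat{B}/\widehat{\F}$ since $\widehat{\Sigma}/\widehat{\F}$ is nowhere dense and the leaf space metric is a length metric.

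The hard part will be step (iii)–(iv): controlling the behavior of $\hat\varphi$ as one approaches the exceptional divisor $\widehat{\Sigma}$. The subtlety is that $\hat\rho$ blows up the normal directions, so a priori the lifted maps could fail to have limits, or the limits could fail to respect $\widehat{\F}$; the point that saves us is that an isometry of $M/\F$ preserves the distance function to the minimal stratum and therefore acts ``radially trivially'' and ``spherically by a genuine isometry of the normal-sphere geometry,'' which is exactly the data that descends through the projectivization $\mathbb{P}(\nu\Sigma)$. One must also confirm that the extended $\hat\varphi(\cdot,t)$ maps leaves of $\widehat{\F}$ to leaves of $\widehat{\F}$ on $\widehat{\Sigma}$ itself, which follows from continuity together with the fact that leaves of $\widehat{\F}$ restricted to $\widehat{\Sigma}=\mathbb{P}(\nu\Sigma)$ are limits of nearby leaves in $\widehat{B}-\widehat{\Sigma}$. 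Once continuity and the isometry property are established, $\hat\varphi$ is a continuous flow of isometries on $\widehat{B}/\widehat{\F}$ as claimed.
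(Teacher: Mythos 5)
Your overall route coincides with the paper's: lift $\varphi$ through the foliated diffeomorphism $\hat{\rho}$ away from the exceptional divisor, extend across $\widehat{\minimalstratum}$, and upgrade to a global isometry by density. The genuine gap is in your ``key observation'' and hence in step (ii): $\hat{\rho}$ does \emph{not} induce an isometry of leaf spaces on the open dense part. In the blow-up metric $\hat{\mathrm{g}}=\hat{\rho}^{*}\hat{\mathrm{g}}^{B}$ the directions in $\Sl^{1}$ (tangent to the normal spheres of $\minimalstratum$) are rescaled by the non-constant factor $r^{2}/\|\xi\|^{2}$, so the induced map $\widehat{B}/\widehat{\F}\to B/\F$ is only distance non-increasing (this is exactly how it is used in Remark \ref{remark-locally bounded}), not distance preserving: two points on a small normal sphere at radius $\|\xi\|$ are roughly $r/\|\xi\|$ times farther apart upstairs than downstairs, and if the map were an isometry on a dense set it would extend to an isometry of the completions, forcing $\widehat{B}/\widehat{\F}\cong B/\F$, which fails because the exceptional fibre $\mathbb{P}(\nu\minimalstratum)/\widehat{\F}$ is not collapsed to $\minimalstratum/\F$. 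Consequently, conjugating $\varphi_t$ by $\hat{\rho}$ does not ``immediately'' give isometries of $(\widehat{B}-\widehat{\minimalstratum})/\widehat{\F}$. The correct argument — which the paper gives, and which you gesture at only in connection with the extension across the divisor — is the invariance of the conformal factor: $\varphi_t$ preserves the distance to $\minimalstratum$, hence the function $\|\xi\|$, and preserves the splitting $H_{1}\oplus H_{2}\oplus H_{3}$ of the transverse space at principal points; since the transverse metric is modified only by the $\varphi_t$-invariant factor $r^{2}/\|\xi\|^{2}$ on $H_{1}$, the lifted maps are local isometries for $\hat{\mathrm{g}}$ even though $\hat{\rho}$ itself is not metric preserving. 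This invariance argument must replace the false isometry claim in step (ii).

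Two further points need tightening. In step (iii), uniform continuity of $\varphi$ only controls convergence in $B/\F$, which does not separate normal directions at $\minimalstratum$, so by itself it cannot produce a limit in $\widehat{B}/\widehat{\F}$; the well-definedness and continuity of the lift on $\widehat{\minimalstratum}$ come from the fact that $\varphi_t$ maps geodesics orthogonal to $\minimalstratum$ to geodesics orthogonal to $\minimalstratum$ (a point $(x,[\xi])$ is sent to the class of the initial direction of the image geodesic), which is the paper's one-line justification and should be stated explicitly. In step (iv), ``$\widehat{\minimalstratum}/\widehat{\F}$ is nowhere dense and the metric is a length metric'' is not enough to identify the metric completion of the regular part with $\widehat{B}/\widehat{\F}$, since an open dense subset can have a strictly larger intrinsic completion; what is needed, and what the paper invokes, is that minimal geodesic segments joining principal points in the quotient contain no singular points, so that intrinsic and ambient distances agree on the dense part and the local isometries extend to global isometries of $\widehat{B}/\widehat{\F}$.
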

\begin{proof}

Since $\varphi_t$ maps geodesics orthogonal to the minimal stratum to geodesics orthogonal to the minimal stratum, the lift $\hat{\varphi}_t$ is well defined and continuous.

 Let $x$ be a principal point and $H$ be the transverse space of the leaf $L_x.$ Then $H$ is decomposed into a direct sum of subspaces $H_1\oplus H_{2}\oplus H_3$, 
 where $H_{i}=\Sl_{i}\cap H$; for the definition of $\Sl_{i}$ recall equation \eqref{eq-decomposition-H}.
Let $\widehat{H}$ be the transversal space of $\widehat{L}_{\hat{x}}$ where $\hat{\rho}(\hat{x})=x$. Then $\widehat{H}$ also decomposes into a direct sum $\widehat{H}_j$ and
$d \hat{\rho}: (\widehat{H}_{j},\hat{\mathrm{g}}_{T})\to (H_{j},g_j)$ is an isometry where $\hat{\mathrm{g}}_{T}$ is the transverse metric of $\widehat{\F}$ and $\mathrm{g}_{j}$ is the restriction of transvere metric $\mathrm{g}_{T}$ of $\mathcal{F}$ to $H_{j}$, if $j\neq 1$ and $\mathrm{g}_{1}=\frac{r^{2}}{\|\xi\|^{2}}\mathrm{g}_{T}.$

Note that $\varphi_t$ (respectively $\hat{\varphi}_{t}$) preserves the decomposition $H_i$ (respectively $\widehat{H}_{i}$). Since the function $\frac{r^{2}}{\|\xi\|^{2}}$ is invariant under the action of $\varphi_t$ we infer that  $\hat{\varphi}_{t}$ is a local isometry on $(\hat{\rho})^{-1}(B_{0})/\widehat{\F}$, where $B_0$ is the union of principal leaves of $B$. Using the density of principal points in the quotient space $\widehat{B}/\widehat{\F}$ and the fact that a minimal geodesic segment joining principal points does not contain singular points, we conclude that the each $\hat{\varphi}_t$ is a global isometry on $\widehat{B}/\widehat{\F}$.

\end{proof}

\begin{remark}
\label{remark-locally bounded}
Using blow-ups, one can also check that
the derivative $\bar{Y}$ of the flow $\varphi$ is locally bounded (cf. Definition \ref{definition-flow}).
In fact, by successive blow-ups one can lift a continuous one parameter local group  $\varphi$ 
on $B/\F$
to an isometric flow on an orbifold (see Remark \ref{remark-blowup-propositions-moregeneral} below) where they are locally
bounded by more classical results, see e.g. \cite[Salem appendix D]{Molino}. Since the blow-up's  are distance non-increasing maps between the leaf spaces (see \cite[Remark 3.8]{Alex6}) the result follows.
\end{remark}

Although along the paper we will consider foliations $\F$ whose leaves are homogeneous but not necessarly Riemannian homogeneous, we present the next result for the sake of completeness. 

\begin{proposition}
\label{proposition-desingularization-homogeneous-foliation}
 Let $(B,G)$ be a Riemannian homogeneous SRF. Then $G$ acts on $\widehat{B}$, and there exists a new metric $\hat{\mathrm{g}}^{G}$ such that $G$ acts by isometries and $(\widehat{B},\widehat{\F})$ is the Riemannian homogeneous foliation induced by $G$. The transverse metric of $\hat{\mathrm{g}}^{G}$ coincides with the transverse metric of $\hat{\mathrm{g}}$. 
\end{proposition}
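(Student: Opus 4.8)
The plan is to mimic the construction of the metric $\hat{\mathrm g}$ given above, but at each stage keep track of the residual $G$-action and arrange that the modifications performed are $G$-equivariant. Concretely, since $(B,G)$ is Riemannian homogeneous with minimal stratum $\Sigma$, the group $G$ maps $\Sigma$ to itself and acts on the normal bundle $\nu\Sigma$; hence it acts on the projectivization $\mathbb{P}(\nu\Sigma)$ and on the blow-up $\widehat B=\{(x,[\xi])\,:\,x=\exp^\perp(t\xi)\}$ by $g\cdot(x,[\xi])=(g\cdot x,[dg\,\xi])$, and this action commutes with $\hat\rho$. By Lemma \ref{lemma-projectiveblow-usp}(c) it preserves the foliation $\widehat\F$, which away from $\widehat\Sigma$ is just the pull-back of $\F$, so on $\widehat B-\widehat\Sigma$ the leaves of $\widehat\F$ are the $G$-orbits (this is consistent with the last sentence of that lemma), and by continuity and density the same holds on all of $\widehat B$. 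Thus $(\widehat B,\widehat\F)$ is the homogeneous foliation induced by $G$; what remains is to produce a $G$-invariant metric with the stated transverse metric.

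First I would re-examine the first-step metric $\tilde{\mathrm g}$ on $B$: it is obtained from $\mathrm g$ by an orthogonal-projection procedure using the slice distribution $\Sl$, which is itself $G$-invariant (slices of $G$-orbits are permuted by $G$), so $\tilde{\mathrm g}$ can be chosen $G$-invariant while keeping properties (a)–(c). Next, the decomposition \eqref{eq-decomposition-H} of $T_{\exp_q(\xi)}M$ into $\Sl^\perp\oplus\Sl^1\oplus\Sl^2\oplus\Sl^3$ is defined in terms of the normal sphere, the radial geodesic, and orthogonal complements — all natural objects — so $dg$ carries the decomposition at $\exp_q(\xi)$ to the corresponding one at $g\cdot\exp_q(\xi)=\exp_{g q}(dg\,\xi)$, and in particular the conformal factor $r^2/\|\xi\|^2$ is $G$-invariant because $\|dg\,\xi\|=\|\xi\|$. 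Hence the metric $\hat{\mathrm g}^B$ defined by \eqref{def-metrica-hatgM} is automatically $G$-invariant on $B-\Sigma$, and its pull-back $\hat{\mathrm g}^G:=\hat\rho^*\hat{\mathrm g}^B$ (extended over $\widehat\Sigma$ exactly as in \cite{Alex6}) is a $G$-invariant metric on $\widehat B$ for which $\widehat\F$ is a SRF; this is the desired $\hat{\mathrm g}^G$. The statement that its transverse metric coincides with that of $\hat{\mathrm g}$ is then immediate, since both are built by the identical formula from the identical transverse data: on $\widehat B-\widehat\Sigma$ the map $\hat\rho$ is a foliated diffeomorphism and, as recalled in the proof of Proposition \ref{lemma-lifted-flow-isometry}, $d\hat\rho$ identifies the transverse space $\widehat H=\bigoplus\widehat H_j$ with $\bigoplus H_j$ carrying the rescaled metrics $\mathrm g_j$ (with $\mathrm g_1=\tfrac{r^2}{\|\xi\|^2}\mathrm g_T$), which is exactly how both $\hat{\mathrm g}$ and $\hat{\mathrm g}^G$ restrict to the normal bundle of the leaves.

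I would organize the write-up in three short steps: (1) construct the $G$-action on $\widehat B$ and verify it preserves $\widehat\F$ and realizes $\widehat\F$ as the $G$-homogeneous foliation; (2) observe that $\Sl$, the decomposition \eqref{eq-decomposition-H}, and the function $r^2/\|\xi\|^2$ are $G$-invariant, hence so are $\tilde{\mathrm g}$ and $\hat{\mathrm g}^B$, and define $\hat{\mathrm g}^G:=\hat\rho^*\hat{\mathrm g}^B$; (3) compare transverse metrics via the foliated diffeomorphism $\hat\rho$ on $\widehat B-\widehat\Sigma$ and extend by density. The only genuinely delicate point — the "main obstacle" — is checking that $G$ really acts \emph{by isometries} of $\hat{\mathrm g}^G$ all the way down to, and across, the exceptional divisor $\widehat\Sigma=\mathbb{P}(\nu\Sigma)$: near $\widehat\Sigma$ the metric $\hat{\mathrm g}^B$ degenerates/extends in the delicate way described in \cite{Alex6}, and one must make sure the $G$-invariance, which is transparent on the open dense set $B-\Sigma$, persists after the smooth extension. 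This follows by continuity of $g\mapsto (g^{-1})^*\hat{\mathrm g}^G$ and the density of $\widehat B-\widehat\Sigma$, but I would state it carefully, invoking the explicit local model of the extended metric from \cite{Alex6} if needed.
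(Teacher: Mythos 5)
The weak point is the sentence ``so $\tilde{\mathrm g}$ can be chosen $G$-invariant while keeping properties (a)--(c)'', from which you deduce that $\hat{\mathrm{g}}^{B}$, and hence $\hat{\mathrm{g}}=\hat{\rho}^{*}\hat{\mathrm{g}}^{B}$, is already $G$-invariant and that one may simply take $\hat{\mathrm{g}}^{G}:=\hat{\mathrm{g}}$. This is not justified: the metric $\tilde{\mathrm{g}}$ of \cite[Proposition 3.1]{Alex6} is not canonically determined by the $G$-invariant data $(\mathrm{g},\F,\Sl)$ --- only its transverse part is pinned down by properties (a) and (b), while its behaviour along the leaves (and the way it is glued) depends on auxiliary choices; compare the remark in Appendix \ref{Sec-linearized-vector-field} that the analogous metrics of \cite{AlexToeben2} depend on a choice of a family of vertical vector fields and are not intrinsic. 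Nothing in your argument shows these choices can be made $G$-equivariantly, so $G$ need not act by isometries of $\hat{\rho}^{*}\hat{\mathrm{g}}^{B}$ even on the dense regular part, and the closing continuity/density argument cannot create an invariance that is not there to begin with. Note also that your claim that the decomposition \eqref{eq-decomposition-H} is preserved already presupposes invariance of $\tilde{\mathrm{g}}$ (the summand $\Sl^{3}$ is a $\tilde{\mathrm{g}}$-orthogonal complement), so the reasoning is circular at exactly this point.

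What you do establish correctly --- $G$-invariance of $\Sl$, of the $\mathrm{g}$-normal bundle of the orbits and of the orthogonal projection, and invariance of the factor $r^{2}/\|\xi\|^{2}$ --- is precisely what the paper uses, but only to conclude the weaker statement that $G$ acts isometrically on the \emph{transverse} part of $\tilde{\mathrm{g}}$ (equivalently of $\hat{\mathrm{g}}^{B}$ and $\hat{\mathrm{g}}$). The missing step is then an averaging: define $\hat{\mathrm{g}}^{G}(X,Y):=\int_{G}\hat{\mathrm{g}}(dg\,X,dg\,Y)\,\omega$ over the compact group $G$. Invariance of $\hat{\mathrm{g}}^{G}$ is then automatic, $(\widehat{B},\widehat{\F})$ is the homogeneous foliation of the lifted action, and the transverse metric is unchanged exactly because the transverse part of $\hat{\mathrm{g}}$ was already $G$-invariant --- which is the content of your step (2). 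With this averaging inserted, your outline matches the intended proof; without it, the central claim of the proposition (that $G$ acts by isometries of $\hat{\mathrm{g}}^{G}$) is unproved.
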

According to this proposition, in particular,  a  flow of isometries $\varphi$ on the orbit space $B/\F$  can be lifted to a  flow  of isometries $\hat{\varphi}$  on the orbit space $\widehat{B}/\widehat{\F}$  with respect to the new metric $\hat{\mathrm{g}}^{G}$.  
\begin{proof}
We first claim that the action of $G$ on each stratum preserves the normal bundle (with respect to $\tilde{\mathrm{g}}$) of each orbit in this stratum. In addition $G$ acts isometrically on the fibers of this bundle.

The above claim is a direct consequence of the following facts:
\begin{enumerate}
\item The distribution $\Sl$ is invariant under the action of $G$.
\item The normal bundle of the orbits (with respect to the original metric $\mathrm{g}$) is invariant under the action of $G$.
\item The orthogonal projection (with respect to the original metric $\mathrm{g}$) is also invariant under the action of $G$.
\end{enumerate} 

Now, since the action preserves the decomposition $H_1$, $H_2$ and $H_3$ the claim is also valid for the metric $\hat{\mathrm{g}}^{B}$ and hence to the blow-up metric $\hat{\mathrm{g}}=\hat{\rho}^{*}\hat{\mathrm{g}}^{B}$. Finally one can define the new metric as 
$$\hat{\mathrm{g}}^{G}(X,Y):=\int_{G} \hat{\mathrm{g}}(dg X, dg Y)\omega $$ 
where $\omega$ is a right-invariant volume form of the compact group $G$.
The rest of the proof follows from Proposition \ref{lemma-lifted-flow-isometry}.
\end{proof}

\begin{remark}
\label{remark-blowup-propositions-moregeneral}
When $M$  and $\F$ are compact, we can take into account Remark \ref{remark-glue} and generalize the above results  as follows: 
\begin{enumerate}
\item[(a)] Each flow of isometries $\varphi:M/\F\times I\to M/\F$ can be lifted to a (smooth) flow of isometries 
$\varphi_{\epsilon}:M_{\epsilon}/\mathcal{F}_{\epsilon}\times I \to M_{\epsilon}/\mathcal{F}_{\epsilon}.$
\item[(b)] If $\F$ is the partition of $M$ into orbits of a compact group $G$ of isometries of $M$, then there exists a metric $\mathrm{g}_{\epsilon}^{G}$ on $M_{\epsilon}$ so that $\F_{\epsilon}$ turns out to be the partition of $M$ into orbits of an isometric action of $G$ on $M_{\epsilon}$. In addition the transverse metric associated to this new metric 
$\mathrm{g}_{\epsilon}^{G}$ coincides with the transverse metric of the original metric $\mathrm{g}_{\epsilon}$ of $M_{\epsilon}$. 
\end{enumerate}

\end{remark}

We conclude this section discussing the Lie structure of isometries groups of leaf spaces. The next proposition  is not necessary for the proof of the main results. 
\begin{proposition}
 \label{liegroupstructure}
Let $\F$ be a closed SRF on a compact Riemannian manifold $M$. 
Then each connected compact group $H$  of isometries on $M/\F$ is a Lie group.
\end{proposition}

\begin{proof}
 
Following \cite[section 5.1]{GorodskiLytchak} one can check that each  isometry of $H$  sends strata to strata. It also sends geodesics orthogonal to strata to geodesics orthogonal to strata.  Therefore, as explained  in Remark \ref{remark-blowup-propositions-moregeneral}, one can lift each isometry  $h\in H$
 to an isometry $\hat{h}$ on the orbifold $M_{\epsilon}/\F_{\epsilon}$. Let $\widehat{H}$ denote the group generated by these isometries.
Note that it can be identified with $H$. 

We claim that  $\widehat{H}$ is also compact. In fact, if $\{ \hat{h}_n \}$ is a sequence of $\widehat{H}$, by construction it projects to a sequence $\{ h_{n} \}$
on $H$. Since $H$ is compact, there is a subsequence $\{h_{n_i} \}$ that converges to an isometry $h\in H$. In particular its restriction to the boundary of a tube around a
 minimal stratum also converges in the compact open topology. Using this fact and the construction of the desingularization (that is a composition of blow-ups) 
one can conclude that the subsequence  $\{\hat{h}_{n_i}\}$ converges  to $\hat{h}$ (the lift of $h$) in the compact open topology  and hence $\widehat{H}$ is compact.

Since $\widehat{H}$ is a compact group of isometries on an orbifold, it  follows from  \cite[Theorem 2]{Bagaev-Zhukova} that $\widehat{H}$ and $H$ are a Lie group. 

\end{proof}

\subsection{Blow-up functions}
\label{sec-blow-upfunctions}

We now introduce a class of basic functions on $B$ that will be used  in Lemma \ref{time-derivative-u-C1} to
 check some regularity conditions necessary to prove the smoothness of solutions of a (weak) parabolic equation.
In particular the main results of this section are  Proposition \ref{lemma-tecnhical} and Proposition \ref{B-are-X-closed}.

Consider the blow-up $\hat{\rho}:(\widehat{B},\widehat{\F})\to (B,\F)$  of $B$ along its minimal stratum $\Sigma$.

\begin{definition}
\label{def-blow-upfunction}
We say that a continuous $\F$-basic function $h$ on $B$ belongs to $\mathcal{B}$ or is a \emph{blow-up function}, if
\begin{enumerate}
\item[(a)] $h\circ\hat{\rho}$ is a smooth $\widehat{\F}$-basic function on $\widehat{B}$.
\item[(b)]  The restriction of $h$ to $\Sigma$ and to $B-\Sigma$ is smooth.
\item[(c)] $X\cdot h=0$ for each $X\in \nu\Sigma$.
\end{enumerate} 
\end{definition}

In what follows we prove two important properties of these functions.
\begin{proposition}
\label{lemma-tecnhical}
If $h\in \mathcal{B}$ then  $h$ is a $C^{1}$ function. 
\end{proposition}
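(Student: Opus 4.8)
The goal is to show that a blow-up function $h \in \mathcal{B}$ is $C^1$ on all of $B$. Away from the minimal stratum $\Sigma$ there is nothing to prove: condition (b) already says $h$ is smooth on $B - \Sigma$. So the entire content is to produce a continuous extension of the derivative $dh$ across $\Sigma$, and to verify that this extension really is the (Fréchet) differential of $h$ at points of $\Sigma$. The strategy I would follow is to pull the problem back to the blow-up $\widehat{B}$, where $h \circ \hat\rho$ is genuinely smooth by (a), and then push the resulting first-order information back down through $\hat\rho$, using conditions (b) and (c) to control the directions that $\hat\rho$ collapses.

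\textbf{Step 1: reduce to a slice and a normal direction.} Fix $q \in \Sigma$ and work in a tubular neighborhood, writing points of $B$ near $q$ in the form $\exp_q^\perp(\xi)$ with $\xi \in \nu_q\Sigma$ small, together with the $\Sigma$-directions along $\Sigma$. Since $h$ is basic and smooth along $\Sigma$ (part of (b)), the derivative of $h$ in directions tangent to $\Sigma$ is already continuous there; the only problematic directions are the normal ones, i.e. derivatives of the function $t \mapsto h(\exp_q^\perp(t\xi))$ and the variation of $\xi$ over the unit normal sphere. So it suffices to show: (i) the radial derivative $\frac{d}{dt}\big|_{t=0^+} h(\exp_q^\perp(t\xi))$ exists, is independent of the unit vector $\xi$, and depends continuously on $q$; and (ii) the full gradient $\nabla h$, defined on $B - \Sigma$, extends continuously to $\Sigma$ with the value dictated by (i). Condition (c), $X \cdot h = 0$ for $X \in \nu\Sigma$, is exactly what kills the radial derivative and the spherical variation at $\Sigma$, so the candidate extension of $dh$ across $\Sigma$ is: equal to $d(h|_\Sigma)$ on $T\Sigma$ and zero on $\nu\Sigma$.

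\textbf{Step 2: use the blow-up to get continuity of the gradient up to $\Sigma$.} On $\widehat{B}$ the function $\hat h := h \circ \hat\rho$ is smooth and $\widehat{\mathcal{F}}$-basic, so its gradient $\nabla^{\hat{\mathrm{g}}} \hat h$ (with respect to the blow-up metric) is a smooth, hence continuous, section of $T\widehat{B}$; in particular it is bounded near the exceptional divisor $\widehat{\Sigma} = \mathbb{P}(\nu\Sigma)$. Away from $\widehat{\Sigma}$, $\hat\rho$ is a foliated diffeomorphism by Lemma \ref{lemma-projectiveblow-usp}(c), so over $B - \Sigma$ one can transport $\nabla \hat h$ to $\nabla h$; the point is to read off from the explicit form of the blow-up metric \eqref{def-metrica-hatgM} that $d\hat\rho$ is uniformly bounded on the relevant pieces — the only direction where $\hat\rho$ rescales is $\Sl^1$, where the metric is blown up by the factor $r^2/\|\xi\|^2$, i.e. $d\hat\rho$ contracts by $\|\xi\|/r \to 0$ toward $\widehat{\Sigma}$ — so $\|\nabla h\|_{\mathrm{g}} = \|d\hat\rho(\nabla^{\hat{\mathrm{g}}}\hat h)\|_{\mathrm{g}}$ stays bounded and in fact its normal component is forced to $0$ as one approaches $\Sigma$ precisely because of condition (c) applied on $\widehat{B}$ along $\widehat\Sigma$. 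Combining the boundedness with the smoothness of $h|_\Sigma$ and $h|_{B-\Sigma}$ gives a continuous vector field on all of $B$ which is $\nabla h$ off $\Sigma$; then a standard fundamental-theorem-of-calculus argument (integrate the gradient along short paths, first radial then within slices, using (c) to handle the arrival at $\Sigma$) shows this field is the differential of $h$ at points of $\Sigma$ as well, so $h \in C^1$.

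\textbf{Main obstacle.} The delicate point is Step 2: the blow-up map $\hat\rho$ is singular along $\widehat\Sigma$, so transporting the smooth gradient of $\hat h$ downstairs is not automatic — one must check that the directions in which $d\hat\rho$ degenerates are exactly those in which $\nabla^{\hat{\mathrm{g}}}\hat h$ has no contribution (or a compensating one). This is where condition (c) is essential: without the hypothesis $X\cdot h = 0$ for $X \in \nu\Sigma$, a blow-up-smooth basic function need not even be continuously differentiable across $\Sigma$ (think of $\|\xi\|$ itself). So the heart of the argument is the bookkeeping with the metric \eqref{def-metrica-hatgM} and the decomposition $T M = \Sl^\perp \oplus \Sl^1 \oplus \Sl^2 \oplus \Sl^3$, showing that (a)+(b)+(c) together pin the normal part of $dh$ to zero on $\Sigma$ in a continuous fashion; everything else is routine.
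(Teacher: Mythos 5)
Your overall strategy --- reduce to the derivatives in the directions tangent to the normal spheres near $\Sigma$, lift to $\widehat{B}$ where $\hat h:=h\circ\hat\rho$ is smooth, and use condition (c) to identify the limit --- is the same as the paper's, but the key analytic step in your Step 2 does not close. First, the identity $\nabla^{\mathrm{g}}h=d\hat\rho\,(\nabla^{\hat{\mathrm{g}}}\hat h)$ is false: since $\hat{\mathrm{g}}=\hat\rho^{*}\hat{\mathrm{g}}^{B}$ and $\hat{\mathrm{g}}^{B}\neq\mathrm{g}$, pushing the $\hat{\mathrm{g}}$-gradient down gives the gradient of $h$ with respect to the auxiliary metric $\hat{\mathrm{g}}^{B}$ of \eqref{def-metrica-hatgM}, not with respect to $\mathrm{g}$. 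Quantitatively, if $v$ is a $\mathrm{g}$-unit vector in $\Sl^{1}$ at a point at distance $s=\|\xi\|$ from $\Sigma$, then $\|v\|_{\hat{\mathrm{g}}^{B}}=r/s$ (here $r$ is the fixed tube radius), so boundedness of $\|\nabla^{\hat{\mathrm{g}}}\hat h\|_{\hat{\mathrm{g}}}$ only yields $|v\cdot h|\le C\,r/s$, which is no estimate at all; the ``bounded gradient upstairs, $d\hat\rho$ contracts'' bookkeeping therefore proves nothing about the dangerous derivatives. Second, even the correct first-order input --- namely that $\hat h$ is constant along the projective fibers of $\widehat\Sigma$, so its angular derivatives vanish at $\widehat\Sigma$ --- only gives that the angular derivative is $O(s)$, hence that the sphere-direction derivative of $h$ is \emph{bounded}; $C^{1}$-regularity requires it to converge to $0$, because the limiting directions lie in $\nu_q\Sigma$ where condition (c) prescribes the value $0$. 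Your remark that ``condition (c) applied on $\widehat B$ along $\widehat\Sigma$'' forces the normal component to zero addresses the radial direction, but the problematic directions are the tangential-to-sphere ones, and no mechanism is given for them.

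The missing ingredient is the second-order argument that constitutes the heart of the paper's proof: condition (c) gives $\partial_r\hat h(0,\underline{\theta},\underline{z})=0$ identically in $(\underline{\theta},\underline{z})$; differentiating this identity in $\theta_i$ and using the smoothness of $\hat h$ to exchange the order of differentiation gives $\partial^{2}_{r\theta_i}\hat h=0$ on $\widehat\Sigma$; the mean value theorem then upgrades $\partial_{\theta_i}\hat h$ from $O(s)$ to $o(s)$, i.e. $\tfrac1s\partial_{\theta_i}\hat h\to 0$, which is exactly the required vanishing of the sphere-direction derivatives of $h$ at $\Sigma$ (equations \eqref{lemma-tecnhical-eq2}--\eqref{lemma-tecnhical-eq5} in the paper). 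Without this step your argument establishes at best boundedness, not continuity, of $dh$ across $\Sigma$. A smaller omission: continuity of the derivatives in the directions complementary to $\ker\Projection_*$ is not automatic either; the paper disposes of it by invoking the proof of Lemma 3.5 in \cite{Alex6}.
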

\begin{proof}
Let $q\in \Sigma$.
We claim that it suffices to show that
\begin{equation}
\label{lemma-tecnhical-eq0}
v_{n}\cdot h(p_{n})\to 0
\end{equation}
for every sequence $(p_{n},v_n)\to (q, v_0)$ such that $(p_n,v_n)\in TB$, and the vectors $v_n$ are tangent to the distance spheres in the normal bundle of $\Sigma$.

Consider first a simple example, namely when $B=\mathbb{R}^{2}$ is foliated by concentric circles around the origin and $\Sigma=\{q\}=\{(0,0)\}$. Note that $\frac{\partial}{\partial x}\big|_{p_{n}}= a_1\frac{\partial}{\partial r}\big|_{p_{n}} +a_{2}v_n$ and 
$\frac{\partial}{\partial y}|_{p_{n}}= b_1\frac{\partial}{\partial r}\big|_{p_{n}} +b_{2}v_{n}$ where $|a_{i}|$ and $|b_{i}|$ are bounded. Equation  \eqref{lemma-tecnhical-eq0} and item (c) of Definition \ref{def-blow-upfunction} imply that $\nabla h(p_{n})\to 0$, which is what one needs to ensure that $h$ is $C^1$.

In the general case, consider a converging sequence $(p_n,v_n)\to (q,v)$.  The tangent spaces $T_{p_n}B$ split as $\ker(\Projection_*)\oplus\ker(\Projection_*)^{\perp}$, where $\Projection:B\to \Sigma$ is the metric projection. Every $v_n$ then splits as $v_n=v_n^{\ker\,\Projection_*}+v_n^{\ker\,\Projection_*^{\perp}}$, and in order to prove the proposition we need to prove
\begin{eqnarray}
v_n^{\ker\,\Projection_*}\cdot h(p_n)&\stackrel{n\to \infty}{\longrightarrow}& v^{\ker\,\Projection_*}\cdot h(q) \label{lemma-technical-eqU}\\
v_n^{\ker\,\Projection_*^{\perp}}\cdot h(p_n)&\stackrel{n\to \infty}{\longrightarrow}& v^{\ker\,\Projection_*^{\perp}}\cdot h(q) \label{lemma-technical-eqW}
\end{eqnarray}
By \cite[proof of Lemma 3.5]{Alex6}, \eqref{lemma-technical-eqW} is satisfied. Moreover, since $h$ is a blow-up function, by condition (c) we have $v^{\ker\,\Projection_*}\cdot h(q)=0$, and it is enough to check that $v_n^{\ker\,\Projection_*}\cdot h(p_n)\to 0$. Since moreover the derivatives in the radial direction go to zero, this reduces to check \eqref{lemma-tecnhical-eq0}, as claimed.
\\

Set $g:=h\circ\hat{\rho}$  and let $\{\hat{p}_{n}\} \subset \widehat{B}$ be the lift of $\{p_{n}\}$. We can assume without loss of generality that $\{\hat{p}_{n}\}$ is contained in a  relatively compact neighborhood in $\widehat{B}$ that admits cylindrical coordinates $(r,\theta_i,z_i)$.

For each $p_n$, define a frame $\{\vec{e}_{j}\}$ of $(\ker\Projection_*)_{p_n}\cap \frac{\partial}{ \partial r}\big|_{p_n}^{\perp}$, where $\vec{e}_j(p_n)= \frac{1}{r} \hat{\rho}_* \frac{\partial}{\partial \theta_{j}}$. Also, let $\underline{\theta}=(\theta_1,\ldots \theta_{\codim(\Sigma,B)-1})$ and $\underline{z}=(z_1,\ldots z_{\dim \Sigma})$.

By definition of $\vec{e}_{j}$, in order to prove equation \eqref{lemma-tecnhical-eq0} it suffices to show 

\begin{equation}
\label{lemma-tecnhical-eq1}
\lim_{n\to \infty} \vec{e}_j(p_n)\cdot h(p_n)=\lim_{n\to\infty}  \frac{1}{r(n)} \frac{\partial g}{\partial \theta_i}(r(n),\underline{\theta}(n),\underline{z}(n))=0
\end{equation}
for the sequence $\hat{p}_{n}=(r(n),\underline{\theta}(n),\underline{z}(n))$ that converges to the fiber $(0,\underline{\theta},\underline{z})$ . 

From the definition of $g$ we have
\begin{equation}
\label{lemma-tecnhical-eq2}
\frac{\partial g}{\partial\theta_i}(0,\underline{\theta},\underline{z})=0
\end{equation}

Condition (c) implies
\begin{equation}
\label{lemma-tecnhical-eq3}
\frac{\partial g}{\partial r}(0,\underline{\theta},\underline{z})=0
\end{equation}
and hence, since $g$ is smooth (see condition (a)), we conclude that
\begin{equation}
\label{lemma-tecnhical-eq4}
\frac{\partial^{2} g}{\partial r \partial \theta_{i}}(0,\underline{\theta},\underline{z})=\frac{\partial^{2} g}{\partial \theta_{i}\partial r}(0,\underline{\theta},\underline{z})=0
\end{equation}

From mean value theorem we also have

\begin{equation}
\label{lemma-tecnhical-eq5}
\Big| \frac{1}{r(n)} \frac{\partial g}{\partial \theta_i}(r(n),\underline{\theta}(n),\underline{z}(n))- \frac{1}{r(n)}\frac{\partial g}{\partial \theta_i}(0,\underline{\theta}(n),\underline{z}(n))\Big|\leq \Big|\frac{\partial^{2} g}{\partial r \partial \theta_{i}}(\tilde{r}(n),\underline{\theta}(n),\underline{z}(n))\Big|
\end{equation}

Now Eq. \eqref{lemma-tecnhical-eq1} follows direct from equations \eqref{lemma-tecnhical-eq2},  \eqref{lemma-tecnhical-eq4} and \eqref{lemma-tecnhical-eq5}.
\end{proof}

Let $\varphi$ be a flow of isometries on $B/\F$ and 
consider the flow of isometries $\hat{\varphi}$ on  $\widehat{B}/\widehat{\F}$ defined in Proposition \ref{lemma-lifted-flow-isometry}. Let $\bar{Y}$ be the associated derivative in the quotient $B/\F$; recall Definition \ref{definition-flow}. 

\begin{proposition}
\label{B-are-X-closed}
Assume that $\hat{\varphi}$ is smooth. 
Then for each $h\in \mathcal{B}$ we have  $\varphi_{s}^{*} h$ and $\varphi_{s}^{*}(\bar{Y}\cdot h) \in \mathcal{B}$.
\end{proposition}
\begin{proof}
We must check that the function $(\bar{Y}\cdot h)\circ \varphi_{s}$ satisfies the conditions of the Definition \ref{def-blow-upfunction}.
The case  $\varphi_{s}^{*} h\in \mathcal{B}$ is simpler.

Condition (b) of Definition \ref{def-blow-upfunction} follows from  hypothesis.

Now we want to check condition (a) of Definition \ref{def-blow-upfunction}. 
Note that $ \varphi_{z+s}\circ\hat{\rho}=\hat{\rho}\circ\hat{\varphi}_{z+s}.$
\begin{eqnarray*}
(\bar{Y}\cdot h)\circ \varphi_{s}\circ\hat{\rho}(\cdot)&=&\frac{d}{d z} h\Big(\varphi\big(\varphi(\hat{\rho}(\cdot),s),z\big)\Big)\Big|_{z=0}\\
&=&\frac{d}{d z} h\Big(\varphi\big(\hat{\rho}(\cdot),s+z\big)\Big)\Big|_{z=0}\\
&=& \frac{d}{d z} h\Big(\hat{\rho}\big(\hat{\varphi}_{s+z}(\cdot)\big)\Big)\Big|_{z=0}\in C^{\infty}.
\end{eqnarray*}

Finally we have to check condition (c) of Definition \ref{def-blow-upfunction}. Let $\gamma$ be a geodesic orthogonal to the minimal stratum $\Sigma$ and $\hat{\gamma}\subset \widehat{B}$ a lift of $\gamma$. Consider the smooth function $g(z,t):=h\circ\hat{\rho}(\hat{\varphi}_{s+z}(\hat{\gamma}(t)))$.
Note that $\hat{\varphi}_{z+s}\circ\hat{\gamma}$ is a horizontal geodesic orthogonal to the lift of $\Sigma$ and hence that $\hat{\rho}\circ\hat{\varphi}_{z+s}\circ\hat{\gamma}$ is orthogonal to $\Sigma$. This fact and the fact that  $h\in \mathcal{B}$ (in particular satisfies condition (c) of Definition \ref{def-blow-upfunction}) imply that
 $\frac{\partial}{\partial t}g(z,0)=0.$ We  conclude that

\begin{eqnarray*}
\frac{d}{d t}(\bar{Y}\cdot h)\circ \varphi_{s}\circ\gamma(t))|_{t=0} &=& \frac{\partial^{2}}{\partial t \partial z} h\Big(\varphi_{s+z}\big(\gamma(t)\big)\Big)\Big|_{z,t=0}\\
 &=& \frac{\partial^{2}}{\partial t \partial z} h\Big(\hat{\rho}\circ\hat{\varphi}_{s+z}\big(\hat{\gamma}(t)\big)\Big)\Big|_{z,t=0}\\
 &=&\frac{\partial^{2}}{\partial t\partial z}g(z,t)|_{z,t=0}\\
 &=&\frac{\partial^{2}}{\partial z\partial t}g(z,t)|_{z,t=0}\\
 &=&0.
\end{eqnarray*}

\end{proof}

\begin{remark}
As we prove Theorem \ref{main-theorem}, it will be clear that the hypothesis in Proposition \ref{B-are-X-closed}, i.e., the smoothness of  $\hat{\varphi}$, is always satisfied when $\F$ is  homogeneous. 
\end{remark}

\begin{remark}
The above results are also valid for foliation with disconnected leaves.
\end{remark}


\subsection{The local reduction}
\label{subsection-local-reduction}

For the sake of completeness, we start by recalling the definitions presented in Section \ref{new-subsection-local-reduction}.
Let $(M,\F)$ be a SRF, and let $\Sigma\In M$ be a stratum of $\F$. 
Let $Y$ be a submanifold contained in a slice (a transverse submanifold) of the regular foliation $(\Sigma,\F|_{\Sigma})$. Consider 
$\satsub:=\pi_{\F}^{-1}(\pi_{\F}(Y))$ the \emph{saturation} of $Y$. We also assume that  $Y$  coincides with the
intersection of $\satsub$ with the slice, i.e., that $Y$ is invariant under the action of the holonomy
pseudogroup of the (regular) foliation $(\Sigma,\F_\Sigma)$. 
Suppose that the normal exponential $\exp:\nu(\satsub) \to M$ is well defined on a tubular neighborhood of radius $\e$ around $\satsub$, and call $B_{\e}\satsub$ the image of such tube. $B_{\e}\satsub$ exists if for example  $\satsub$ is relatively compact. 
Define $\newN:= \exp\left(\nu(\satsub)\big|_Y\right)\cap B_\e\satsub$, together with the metric projection $\Projection_Y:\newN\to Y$. The fiber of $\Projection_Y$ at a point $p\in Y$ will be denoted by $\newN_p$.  The submanifold $\newN$ is called \emph{local reduction of $(M,\F)$ along $Y$}.
The foliation $\F$ intersects $\newN$ in a foliation $\FNo:=\F \cap \newN$. 
Notice that the leaves of $\FNo$ are contained in the fibers of $\Projection_Y$.

\begin{proposition}
\label{lemma-same-transverse-metricN}
There exists a metric $\tilde{\mathrm{g}}$ on $\newN$ that preserves the transverse metric of $\F$, i.e., 
the distance between  leaves of $\FNo$ is the
 same as the distance between the plaques of $\F$ that contain such leaves.
In particular $\FNo$ is a  SRF on $(\newN,\tilde{\mathrm{g}})$.
\end{proposition}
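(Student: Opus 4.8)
The plan is to construct the metric $\tilde{\mathrm{g}}$ on $\newN$ by essentially the same two-step procedure used earlier in this section for the blow-up metric, replacing the minimal stratum $\Sigma$ by the submanifold $\satsub$ (or rather its portion near $Y$). First I would fix the smooth distribution $\Sl$ on a tubular neighborhood $B_\e\satsub$ given by $\Sl_{\exp(\xi)} := T_{\exp(\xi)}S_q$, where $\xi \in \nu_q\satsub$ and $S_q$ is a slice of $L_q$ at $q$ with respect to the original metric $\metric$; note that $\satsub$ is $\F$-saturated, so $\nu(\satsub)$ is $\F$-invariant in the appropriate sense and the slices fit together smoothly. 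The key point is that along $\newN = \exp(\nu(\satsub)|_Y)\cap B_\e\satsub$ the leaves of $\FNo$ are contained in the fibers $\newN_p$ of $\Projection_Y$, and these fibers are (pieces of) slices, so the transverse geometry of $\F$ restricted to $\newN$ is encoded in the normal directions to these fibers.

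Next I would produce an intermediate metric $\tilde{\metric}_0$ on $B_\e\satsub$ — exactly as in the construction of $\tilde{\metric}$ in the blow-up section — having the properties: (a) the distance between leaves of $\F|_{B_\e\satsub}$ with respect to $\tilde{\metric}_0$ equals that with respect to $\metric$; (b) the normal space of each plaque of $\F$ (with respect to $\tilde{\metric}_0$) is the $\metric$-orthogonal projection onto $\Sl$ of its $\metric$-normal space, hence lies inside $\Sl$; (c) unit-speed geodesics orthogonal to $\satsub$ remain unit-speed geodesics orthogonal to $\satsub$ for $\tilde{\metric}_0$. Restricting $\tilde{\metric}_0$ to $\newN$ then gives a candidate $\tilde{\mathrm{g}}$: since by (b) the plaque-normal directions within $\newN$ lie in $\Sl$ and, along $Y$, the relevant part of $\Sl$ is tangent to $\newN$, the transverse metric of $\F$ is faithfully recorded on $\newN$. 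One then checks that with this metric every geodesic of $\newN$ orthogonal to a leaf of $\FNo$ stays orthogonal to all leaves it meets — this uses property (c) together with the fact that $Y$ is invariant under the holonomy pseudogroup of $(\Sigma,\F_\Sigma)$, so that horizontal geodesics cannot escape $\newN$ along the $Y$-directions — and that the leaves of $\FNo$ form a singular foliation (which is automatic since they are intersections of leaves of $\F$, a singular foliation, with the $\F$-invariant set $\newN$). Hence $(\newN,\tilde{\mathrm{g}})$ is a SRF.

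Finally, the "transverse metric" claim is the statement that the quotient pseudometric on $\newN/\FNo$ agrees with the restriction to $\pi_\F(\newN)$ of the quotient metric on a neighborhood of $\pi_\F(\satsub)$ in $M/\F$. This follows from property (a) applied fiberwise: a minimal geodesic realizing the distance between two plaques of $\F$ near $\satsub$ can be taken orthogonal to $\satsub$ at its endpoints' projections, hence (after the modification producing $\tilde{\metric}_0$) lies in the slice directions and projects into $\newN$, so the distance is realized within $\newN$ as well.

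The main obstacle will be step two: showing that restricting the modified ambient metric to $\newN$ genuinely preserves the transverse metric and produces a transnormal system, i.e., controlling how horizontal geodesics behave near the "boundary" directions of $\newN$ (the directions along which $\newN$ is cut out of $B_\e\satsub$). Concretely, one must verify that a horizontal geodesic of $(\newN,\tilde{\mathrm{g}})$ does not feel the ambient directions transverse to $\newN$ inside $B_\e\satsub$; this is where the hypothesis that $Y$ equals the intersection of $\satsub$ with the slice and is holonomy-invariant does the real work, and making that precise — presumably by a local model / second-variation argument analogous to the one in \cite[proof of Lemma 3.5]{Alex6} — is the delicate part.
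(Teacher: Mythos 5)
Your first two-thirds run parallel to the paper: the paper also starts from the slice distribution $\Sl_z:=T_zS_p$ (for $z\in\newN_p$) and invokes the metric of \cite[Proposition 3.1]{Alex6} --- your intermediate metric $\tilde{\mathrm{g}}_0$ with properties (a)--(b) --- so that the $\F$-normal spaces lie in $\Sl$ and the transverse metric is unchanged. The divergence, and the genuine gap, is in how the metric is put on $\newN$. The paper does \emph{not} restrict the ambient modified metric to $T\newN$: it defines the metric on $\newN$ as $(\Pi|_{\mathcal{H}}^{-1})^{*}\tilde{\mathrm{g}}$, where $\Pi:TM|_{\newN}\to T\newN$ is the orthogonal projection with respect to the original metric. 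In other words, the metric of the slice distribution (which has the same rank as $T\newN$, coincides with $T\newN$ along $Y$, and contains all the $\F$-horizontal spaces) is \emph{transplanted} onto $T\newN$ through the near-identity isomorphism given by $\Pi$. With this definition the statement about the transverse metric is built in, and the SRF property of $\FNo$ follows by quoting \cite[Proposition 2.17]{Alex6}.

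Your route --- plain restriction of $\tilde{\mathrm{g}}_0$ to $T\newN$ --- leaves precisely the crucial step unproved, as you yourself acknowledge in the last paragraph. Away from $Y$ the $\tilde{\mathrm{g}}_0$-horizontal spaces of $\F$ lie in $\Sl_z=T_zS_p$ but are in general \emph{not} tangent to $\newN$ (the $T\newN$-directions transverse to the fiber $\newN_p$ are exponential/Jacobi-type directions obtained by varying the footpoint along $Y$, and these leave $\Sl_z$ once one is off the zero section), so for the restricted metric the quotient distance on $\newN/\FNo$ is a priori only $\geq$ the distance between the corresponding plaques of $\F$, and transnormality of $\FNo$ does not follow either. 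Your heuristic that a minimizing horizontal geodesic between plaques "lies in the slice directions and projects into $\newN$" is exactly what fails to be clear: such a geodesic generally has to move in the $Y^*$-direction of the quotient, and there is no reason it should stay over $Y$ (i.e., inside $\newN$) rather than over other points of $\satsub$. The projection trick of the paper is the missing idea that bypasses this difficulty, replacing the delicate second-variation/local-model argument you defer to by a definition that makes the comparison of horizontal lengths tautological.
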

\begin{proof}
This metric can be constructed as follows. Consider the regular distribution $\Sl$ defined as $\Sl_{z}:=T_{z} S_{p}$ where $z\in \newN_{p}$ and $S_p$ is the slice through $p$. According to \cite[Proposition 3.1]{Alex6} there exists a metric $\tilde{\mathrm{g}}$ on a neighborhood of $\newN$ 
so that the normal space of $\F$ (with respect to $\tilde{\mathrm{g}}$) is contained in $\Sl$ and the SRF $\F$ (with respect to $\tilde{\mathrm{g}}$) has the same transverse metric of $\F$ (with respect to the original metric).
Let $\Pi: TM|_\newN\to T\newN$ be the orthogonal projection (with respect to original metric) and define a metric on $T\newN$ as $(\Pi|_{\mathcal{H}}^{-1})^{*} \tilde{\mathrm{g}}.$ Let us denote this new metric on $\newN$ also as $\tilde{\mathrm{g}}.$
Following \cite[Proposition 2.17]{Alex6} we conclude that $\FNo$ is a SRF on $(\newN,\tilde{\mathrm{g}})$. 
\end{proof}

Suppose $\newN$ is the local reduction of $(M,\F)$ along $Y$, and let $q\in Y$. If $q'$ is another point in $L_q$, we can similarly find $Y'$ through $q'$ and a local reduction $(\newN', \FNNo)$ along $Y'$. Moreover we can do it so that there is a flow of a vector field $X$ tangent to the leaves that sends $(\newN,\FNo)$ foliated diffeomorphically to $(\newN',\FNNo)$. By the properties of the metrics $\tilde{\mathrm{g}}$, $\tilde{\mathrm{g}}'$ on $\newN$, $\newN'$ proved in Proposition \ref{lemma-same-transverse-metricN}, this diffeomorphism induces a local isometry
$$\tau:(\newN,\tilde{\mathrm{g}})/\FNo\longrightarrow (\newN',\tilde{\mathrm{g}}')/\FNNo.$$
This isometry does not depend on the choice of $X$, but only on the homotopy class of the integral curve $\alpha$ of $X$ joining $q$ to $q'$, so we refer to $\tau$ as $\tau_{[\alpha]} $.

Notice that $Y$ can meet $L_q$ in several points $q_i$. For every such $q_i$, and every curve $\alpha$ from $q$ to $q_i$ contained in a leaf, there is an associated local isometry

$$\tau_{[\alpha]}:\newN/\FNo\longrightarrow \newN/\FNo.$$

Let  $\mathcal{H}$ be the pseudogroup of $\newN/\FNo$ generated by all isometries $\tau_{[\alpha]}$, for all curves
$\alpha\In L_q$ with initial and final point in $\newN$.

\begin{definition}\label{singular-holonomy-pseudogroup}
The pseudogroup $\H$ defined above will be called \emph{singular holonomy pseudogroup}. The triple $(\newN,\FNo,\H)$ is an example of singular Riemannian foliation with disconnected leaves (cf. Section \ref{subsection-disconnected-leaves}) which we denote by $\FN$. The leaves of $\FN$ are precisely the (possibly disconnected) intersections of $\newN$ with the leaves of $\F$.
\end{definition}
\begin{remark}\label{remark-reduction}
By Proposition \ref{lemma-same-transverse-metricN}, the inclusion $\newN\to B_\e\satsub$ induces an isometry $\newN/\FN\to B_\e\satsub/\F$ that preserves the codimension of the leaves. By the main result in \cite{AlexRadeschi}, this map is smooth and in particular every smooth basic function in $(\newN,\FN)$ extends to a smooth basic function in $(B_\e\satsub,\F)$.
\end{remark}

Let $\pi_{\FNo}:\newN \to \newN /\FNo$ be the quotient map, $Y^*:=\pi_{\FNo}(Y)$ and $\Projection_{Y^*}: \newN /\FNo\to Y^*$ be the submetry with fibers $\newN_p/\FNo$. Note that $Y^*$ can be identified with $Y$. It is easy to see that 
\begin{equation}
\label{eq-comutativity-submersions}
\Projection_{Y}=\Projection_{Y^*}\circ\pi_{\FNo} 
\end{equation}
or, equivalently, that the following diagram commutes
\begin{displaymath}
\xymatrix@+20pt{
\newN \ar[d]_{\pi_{\FNo}}\ar[r]^{\Projection_{Y}} & Y=Y^* \\
\newN /\FNo \ar[ru]_{\Projection_{Y^*}}
}
\end{displaymath}

A local reduction satisfies the following nice property that relates the foliated structure of $(\newN,\FNo)$ with the submersion $\Projection_Y:\newN\to Y^*$.
\begin{proposition}
\label{lemma1-proposition-vectorfieldY-on-N}
Any horizontal basic vector field $\vec{\xi}$ for $\Projection_Y$ is a horizontal foliated vector field of $\FNo$ and for each fixed $q$ the geodesic $t\to \exp_{q}(t\vec{\xi}(q))$ is always contained in the same stratum. 
\end{proposition}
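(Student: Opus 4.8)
The plan is to unpack the definitions of the local reduction and of the metric $\tilde{\mathrm{g}}$ and show directly that a horizontal basic vector field for $\Projection_Y$ lands inside the space of horizontal foliated vector fields of $\FNo$. First I would recall that the fibers $\newN_p$ of $\Projection_Y$ are exactly the intersections $\exp(\nu(\satsub)|_p)\cap B_\e\satsub$, that the leaves of $\FNo$ lie inside these fibers, and that by construction $\FNo$ is a SRF on $(\newN,\tilde{\mathrm{g}})$ with the same transverse metric as $\F$. The key structural fact to extract from Proposition \ref{lemma-same-transverse-metricN} and its proof is that the distribution $\Sl$ (tangent to the slices $S_p$, $p\in Y$) contains the $\tilde{\mathrm{g}}$-normal spaces of $\F|_\newN$; hence the $\tilde{\mathrm{g}}$-horizontal distribution of $\FNo$ is a subdistribution of the vertical distribution $\ker (\Projection_Y)_* $ only insofar as the leaves of $\FNo$ sit in the fibers, and conversely the $\Projection_Y$-horizontal distribution is $\F$-horizontal. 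The point is that a vector tangent to $Y$, pulled back by parallel transport along the normal geodesics defining $\newN$, gives a vector that is orthogonal (with respect to $\tilde{\mathrm{g}}$) to every leaf of $\FNo$ it meets, precisely because $\tilde{\mathrm{g}}$ reproduces the transverse metric of $\F$ and $Y$ is transverse to $\F|_\Sigma$.

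Concretely I would argue as follows. Let $\vec{\xi}$ be horizontal (i.e.\ $(\Projection_Y)_*$-related to a vector field on $Y^*$) and basic for $\Projection_Y$. Basic means $\vec{\xi}$ is the horizontal lift of a fixed vector field on $Y$; since $\F$ restricted to $\Sigma$ is a (regular) Riemannian foliation and $Y$ is invariant under its holonomy pseudogroup, a vector field on $Y$ corresponds to a basic vector field of $\F|_\Sigma$, whose flow permutes the leaves of $\F$. Transporting this along the normal directions that build $\newN$, the flow of $\vec{\xi}$ maps fibers $\newN_p$ to fibers $\newN_{p'}$ and, because $\tilde{\mathrm{g}}$ preserves the transverse metric of $\F$, maps leaves of $\FNo$ to leaves of $\FNo$. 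Thus $\vec{\xi}$ is a foliated vector field of $\FNo$; being $\Projection_Y$-horizontal and $\F$-horizontal by the containment $\nu_{\tilde{\mathrm{g}}}\F \subset \Sl$, it is a horizontal foliated vector field. For the last assertion, note that the flow of $\vec{\xi}$ preserves the partition into strata of $\F$ (it sends leaves to leaves of the same dimension), so the integral curve $t\mapsto \exp_q(t\vec{\xi}(q))$ — which is an integral curve of $\vec{\xi}$ by definition of "horizontal lift of a vector field on $Y$" and the fact that such lifts are geodesic in the transverse directions — stays in a single stratum.

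The main obstacle I expect is the verification that the flow of a $\Projection_Y$-horizontal basic field really is geodesic, i.e.\ that the integral curve $t\mapsto\exp_q(t\vec\xi(q))$ coincides with the flow line through $q$; equivalently, that the horizontal lift to $\newN$ of a vector field on $Y$ can be chosen so that its integral curves are the normal geodesics $\exp_q(t\,\vec\xi(q))$. This requires knowing that these normal geodesics project to geodesics (integral curves of a field) on $Y$ and that the chosen metric $\tilde{\mathrm{g}}$ makes $\Projection_Y$ a Riemannian submersion off the singular set, which in turn rests on item (c)-type properties of $\tilde{\mathrm{g}}$ from \cite{Alex6} — that geodesics normal to $\satsub$ stay normal. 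I would handle this by invoking Proposition \ref{lemma-same-transverse-metricN} together with the cited \cite[Prop.\ 3.1, Prop.\ 2.17]{Alex6}: the metric $\tilde{\mathrm{g}}$ is built precisely so that the slices $S_p$ are totally geodesic in the relevant sense and normal geodesics to $\satsub$ remain normal, which forces the claimed relation between $\exp_q(t\vec\xi(q))$ and the flow of $\vec\xi$, and simultaneously forces invariance of the stratification along such geodesics.
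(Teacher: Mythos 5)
Your proposal does not contain the actual argument; at its two key points it is circular. First, the claim that $\vec{\xi}$ is foliated is justified by saying that ``the flow of $\vec{\xi}$ maps leaves of $\FNo$ to leaves of $\FNo$ because $\tilde{\mathrm{g}}$ reproduces the transverse metric of $\F$'' --- but a horizontal field whose flow sends leaves to leaves \emph{is} a foliated field, so this is exactly the statement to be proved, and the transverse-metric property of $\tilde{\mathrm{g}}$ does not by itself give it: being $\Projection_Y$-basic only controls the projection of $\vec{\xi}$ to $Y$, not its behaviour along the leaves of $\FNo$ inside the fibers. The paper's proof closes this gap with a concrete comparison: at a regular point $q$ it takes the horizontal \emph{foliated} field $\tilde{\xi}$ along $L_q$ with $\tilde{\xi}(q)=\vec{\xi}(q)$ (which is normal to the fibers because these are saturated by leaves and the $\FNo$-leaves in $Y$ are points), observes via $\Projection_Y=\Projection_{Y^*}\circ\pi_{\FNo}$ and equifocality that the geodesics $\exp_q(t\tilde{\xi}(q))$ and $\exp_{q'}(t\tilde{\xi}(q'))$, $q'\in L_q$, project to the \emph{same} geodesic of $Y$, and then uses uniqueness of $\Projection_Y$-horizontal lifts to conclude $\tilde{\xi}=\vec{\xi}$ along $L_q$. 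Nothing in your sketch plays the role of this identification.

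Second, the assertion that $t\mapsto\exp_q(t\vec{\xi}(q))$ stays in one stratum is derived in your proposal from ``the flow preserves the partition into strata'', which again presupposes the foliatedness you have not established, and in any case does not address the real danger: a horizontal geodesic issuing from a regular point could a priori run into a more singular stratum. The paper rules this out by equifocality of singular Riemannian foliations (cf.\ \cite{AlexToeben2}): if $\exp_q(t\vec{\xi}(q))$ contained a singular point, two distinct horizontal lifts of the same geodesic in $Y$ would intersect there, a contradiction; the statement for lower strata then follows by induction on the stratification. This equifocality-plus-induction step is absent from your proposal, while the issue you flag as the main obstacle (whether flow lines of $\vec{\xi}$ are geodesics and whether $\Projection_Y$ is Riemannian for $\tilde{\mathrm{g}}$) is not actually needed: the paper never requires the flow of $\vec{\xi}$ to be geodesic, it works directly with the geodesics $\exp_q(t\vec{\xi}(q))$ and their projections to $Y$.
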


\begin{proof} 
Let $\vec{\xi}$ be a horizontal basic vector field of $\Projection_Y$. We first claim that $\vec{\xi}$ restricted to the regular stratum of $\FNo$ is a \emph{foliated vector field}, i.e., a basic vector field with respect to $\pi_{\FNo}$. For $q\in \newN_p$ a regular point, let $\tilde{\xi}$ be the horizontal foliated vector field along $L_{q}$ such that $\tilde{\xi}(q)=\vec{\xi}(q)$. Since $\Projection_Y$ is foliated and the $\FNo$-leaves in $Y$ are just points, the fibers of $\Projection_Y$ are saturated by the leaves of $\FNo$ and therefore $\tilde{\xi}$ is everywhere normal to $\newN_{p}$.

Let $q'\in L_q$. By equation \eqref{eq-comutativity-submersions}  we note that the $\Projection_Y$-horizontal geodesics $\alpha_q(t)=\exp_{q}\big(t\tilde{\xi}(q)\big)$, $\alpha_{q'}(t)=\exp_{q'}\big(t\tilde{\xi}(q')\big)$ project to the same geodesic in $Y$. This implies that $\tilde{\xi}(q')=\vec{\xi}(q')$ for every $q'\in L_{q}$ and concludes the proof of the claim.

By the equifocality property of singular Riemannian foliations (cf. \cite{AlexToeben2}), $\alpha_q$ cannot contain a singular point $\alpha_{q}(t_{0})$. In fact if such a point existed then there would be two lifts of a geodesic in $Y$   intersecting at $\alpha_q(t_{0})$, contradiction. Therefore $\alpha_q$ is always contained in the regular stratum.

By induction on the stratification, one can now prove that $\vec{\xi}$ restricted to each stratum is foliated, and for every $q\in \newN$ the horizontal geodesic $\alpha_q$ is always contained in the same stratum.
\end{proof}

\begin{corollary}
\label{corollary-Forbitlike-homogeneous}
Let $\F$ be an orbit-like foliation. Let $q$ be a singular point, $Y$ a slice in the stratification $\Sigma$ that contains $q$ and $\newN$
the reduction along $Y$. Then there exists a compact group $G$ acting on $\newN$ such that the leaves of  $\FNo$ are orbits of $G$, i.e., $\FNo$ is  a homogenous SRF on $(\newN,\tilde{\metric}).$ In particular, if $\F$ is closed, then   $B_\e\satsub /\F$ is equal to $(\newN/G)/ \H $
\end{corollary}
\begin{proof}
Let $G=K_q$ be the associated group (recall Definition \ref{definition-locally-homogeneous}). By flowing along the foliated basic vector fields defined in Proposition \ref{lemma1-proposition-vectorfieldY-on-N}, we can make $G$ act smoothly on the whole $\newN$, even though not by isometries.
\end{proof}


\begin{proposition}
\label{lemma-submersion-N}
There exists a metric $\mathrm{g}_\newN$ on $\newN$ with following properties:
\begin{enumerate}
\item[(a)] The submersion $\Projection_Y: \newN\to Y$ is Riemannian.
\item[(b)] Each fiber $\newN_p$ is flat.
\item[(c)] The foliation $\FNo$ is a SRF on $(\newN,\mathrm{g}_\newN).$
\end{enumerate}
\end{proposition}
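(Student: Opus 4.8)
The plan is to build $\mathrm{g}_\newN$ by modifying the local-reduction metric $\tilde{\mathrm{g}}$ of Proposition \ref{lemma-same-transverse-metricN} in the fiber directions, keeping the horizontal behaviour essentially unchanged so that property (c) and the transverse metric are preserved. First I would recall the setup from Proposition \ref{lemma1-proposition-vectorfieldY-on-N}: the fibers $\newN_p$ of $\Projection_Y$ are saturated by the leaves of $\FNo$, every horizontal $\Projection_Y$-basic vector field is a horizontal foliated vector field of $\FNo$, and $\Projection_Y$-horizontal geodesics stay in a single stratum. This means that the foliation $\FNo$ is, fiberwise over $Y$, a copy of the infinitesimal-type foliation transported along $Y$ by the parallel structure, and the horizontal distribution for $\Projection_Y$ is foliated. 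The idea is that we are free to replace, on each fiber $\newN_p\cong$ (a ball in the normal space $\nu_{q}\satsub|_p$), the induced metric by the flat Euclidean metric coming from the linear structure of that normal space, while declaring the horizontal distribution of $\Projection_Y$ (with respect to $\tilde{\mathrm{g}}$) to be orthogonal to the fibers and isometric to $TY$ via $d\Projection_Y$.

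Concretely, I would proceed as follows. Fix the $\tilde{\mathrm{g}}$-horizontal distribution $\mathcal{H}$ for $\Projection_Y$ and its vertical complement $\mathcal{V}=\ker d\Projection_Y$. Define $\mathrm{g}_\newN$ by: (i) on $\mathcal{H}$, set $\mathrm{g}_\newN|_{\mathcal{H}}:= (d\Projection_Y)^*(\mathrm{g}_Y)$, i.e. $d\Projection_Y\colon(\mathcal{H},\mathrm{g}_\newN)\to(TY,\mathrm{g}_Y)$ is a linear isometry, using the slice metric on $Y$; (ii) declare $\mathcal{H}\perp\mathcal{V}$; (iii) on $\mathcal{V}$, i.e. on each fiber $\newN_p$, use the flat metric induced from the vector space structure of the normal fiber $\nu_p(\satsub)|$ — the fiber $\newN_p=\exp(\nu(\satsub)|_Y)\cap B_\e\satsub$ at $p$ is by construction a ball in $\nu_p\satsub$, so it carries a canonical flat metric, and one can transport the $\FNo$-structure into these coordinates consistently along $Y$ since $\FNo$ restricted to each fiber is invariant under the relevant normal holonomy. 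Property (a) is then immediate from (i)--(ii): $\Projection_Y$ is a Riemannian submersion because the horizontal space is isometric to the base and orthogonal to the fibers. Property (b) is immediate from (iii). For property (c), I would argue that $\mathrm{g}_\newN$ and $\tilde{\mathrm{g}}$ have the same transverse metric along $\FNo$: the leaves of $\FNo$ live inside the fibers $\newN_p$ (Proposition \ref{lemma1-proposition-vectorfieldY-on-N}, last sentence before it), so the distance between nearby leaves is measured within a fiber, and the foliation $\FNo$ restricted to a single fiber is (by the local-reduction construction and \cite[Proposition 2.17]{Alex6}) a homogeneous-type SRF in the linear normal coordinates, hence remains an SRF for the flat metric with the same transverse metric; transversality of horizontal geodesics to the leaves then follows from the same equifocality argument as in Proposition \ref{lemma1-proposition-vectorfieldY-on-N}, since $\Projection_Y$-horizontal vector fields for $\mathrm{g}_\newN$ coincide with those for $\tilde{\mathrm{g}}$.

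The key point making (c) work is the transnormality condition: I must check that every $\mathrm{g}_\newN$-geodesic perpendicular to one leaf of $\FNo$ stays perpendicular to all leaves it meets. I would split a vector into horizontal and vertical parts. Along a fiber $\newN_p$, this is the statement that $\FNo|_{\newN_p}$ is an SRF for the flat metric, which holds because the infinitesimal foliation is an SRF for the Euclidean metric (\cite[Proposition 6.5]{Molino}) and the local-reduction fiber is linearly identified with a ball in the normal space on which $\F$ restricts to (a foliation diffeomorphic to) this infinitesimal foliation. For the horizontal direction, a horizontal geodesic projects by $d\Projection_Y$ to a geodesic in $Y$ and, being $\Projection_Y$-horizontal and foliated, meets each leaf it hits orthogonally by Proposition \ref{lemma1-proposition-vectorfieldY-on-N}; a mixed geodesic is handled by the O'Neill/submersion formulas together with the fact that the horizontal distribution is foliated, so the splitting is preserved.

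The main obstacle I anticipate is (c): namely, verifying carefully that replacing the fiber metric by the flat one does not destroy transnormality when horizontal and vertical directions interact, i.e. controlling the second fundamental form of the fibers and the holonomy of the submersion $\Projection_Y$ so that O'Neill's tensor $A$ behaves compatibly with $\FNo$. The saving grace is that $Y$ is, by hypothesis, a transverse submanifold of the regular foliation $(\Sigma,\F|_\Sigma)$ invariant under holonomy, and the fibers $\newN_p$ are modeled on the normal spaces $\nu_p\satsub$; so the structure along $Y$ is, up to the holonomy pseudogroup, that of a product, and one reduces to the already-known homogeneous/linear model fiberwise. Thus the verification of (c) should follow the same pattern as Proposition \ref{lemma-same-transverse-metricN} and \cite[Proposition 2.17, Proposition 3.1]{Alex6}, applied to the new metric, and I would invoke those results rather than redo the computation.
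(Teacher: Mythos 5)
Your construction of the metric is essentially the one the paper uses (flat metric on the fibers coming from the linear structure via the normal exponential, plus $\Projection_Y^*\mathrm{g}_Y$ on a horizontal complement), so (a) and (b) are fine. The gap is in your verification of (c). First, the claim that $\mathrm{g}_\newN$ and $\tilde{\mathrm{g}}$ ``have the same transverse metric along $\FNo$'' is false, and the paper explicitly records this right after the proposition: the remark preceding Proposition \ref{proposition-property-gN} states that $\mathrm{g}_{\newN}$ does \emph{not} preserve the transverse metric of $\tilde{\mathrm{g}}$, which is exactly why Proposition \ref{proposition-property-gN} is needed later. Distinct leaves of $\FNo$ lying in different fibers (and even nearby leaves within one fiber, once you flatten it) change their mutual distances when you pass from $\tilde{\mathrm{g}}$ to $\mathrm{g}_\newN$, so you cannot get transnormality for the new metric by appealing to that of $\tilde{\mathrm{g}}$ through an alleged equality of transverse metrics. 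Second, the proposed treatment of ``mixed geodesics'' via O'Neill's formulas is not an argument: transnormality of the singular foliation $\FNo$ (whose leaves are not the fibers of $\Projection_Y$) cannot be checked by splitting geodesics into purely horizontal and purely vertical cases, and nothing in your sketch controls a geodesic of $\mathrm{g}_\newN$ with mixed initial direction.

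What actually closes (c), and what the paper does, is a Lie-derivative criterion rather than a direct geodesic analysis: since $\FNo$ is already an SRF for some metric (the metric $\tilde{\mathrm{g}}$ of Proposition \ref{lemma-same-transverse-metricN}), it suffices by \cite[Proposition 2.14]{Alex6} to show that $\mathcal{L}_{\vec{X}}\mathrm{g}_{\newN}$ vanishes on the horizontal distribution $H$ for every vector field $\vec{X}$ tangent to the leaves. This is where Proposition \ref{lemma1-proposition-vectorfieldY-on-N} enters: $\Projection_Y$-basic horizontal fields $\vec{\xi}_1,\vec{\xi}_2$ are foliated, so $[\vec{X},\vec{\xi}_i]$ is tangent to the leaves, and
\begin{equation*}
\mathcal{L}_{\vec{X}}\mathrm{g}_\newN(\vec{\xi}_1,\vec{\xi}_2)
= \vec{X}\cdot\mathrm{g}_Y\bigl({\Projection_Y}_*\vec{\xi}_1,{\Projection_Y}_*\vec{\xi}_2\bigr)-0-0=0,
\end{equation*}
because ${\Projection_Y}_*\vec{\xi}_i$ are fixed fields on $Y$ and $\vec X$ is vertical. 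Your proposal cites Proposition \ref{lemma1-proposition-vectorfieldY-on-N} only to control horizontal geodesics, but its real role is to make this Lie-derivative computation (and hence the criterion of \cite[Proposition 2.14]{Alex6}) applicable; without that step, or some substitute for it, part (c) is not proved.
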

\begin{proof}
Consider the regular distribution $\Sl$ defined as $\Sl_{z}:=T_{z} S_{p}$ where $z\in \newN_{p}$ and the metric $\mathrm{g}_0$ on $\Sl$ so that 
$\dd (exp_{p})_{v}: T_{v}T_{p}S_{p}\to \Sl_{\exp_{p}(v)}$ is an isometry, for each $v$ normal to $Y$. 
As before, let $\Pi: TM|_{\newN}\to T\newN$ be the orthogonal projection (with respect to original metric) and define a metric on $T\newN$ as $(\Pi|_{\mathcal{H}}^{-1})^{*} \mathrm{g}_{0}.$
Let us denote this new metric on $\newN$ also as $\mathrm{g}_{0}.$
Following \cite[Proposition 2.17]{Alex6} we conclude that $\FNo$ is a SRF on $(\newN,\mathrm{g}_{0})$. Moreover, since $T_{z}\newN_{p}\subset \Sl_{z}$ and $\dd (exp_{p})_{v} T_{v}T_{p}S_{p}\to \Sl_{\exp_{p}(v)}$ is an isometry, every fiber $\newN_p$ is flat with respect to $\mathrm{g}_{0}$.
Consider $H$ the distribution orthogonal to the fibers of $\newN$. We will change the metric of $H$ in order to get the appropriate metric $\mathrm{g}_\newN$ satisfying (a) and (c). Let
\[
\mathrm{g}_\newN:= \mathrm{g}_0|_{H^\perp}+\Projection_Y^* \mathrm{g}_Y
\]

Notice that the metric on the fibers of $\Projection_Y$ is still $\mathrm{g}_0$, thus condition (b) remains satisfied. Moreover, the submersion is now Riemannian by construction.

In order to prove that $\FNo$ is a singular Riemannian foliation, it is enough to compute the Lie derivative $\mathcal{L}_{\vec{X}}\mathrm{g}_{\newN}$ and check that $\mathcal{L}_{\vec{X}}\mathrm{g}_{\newN}|_{H}=0$  for each vector field $\vec{X}$ tangent to the leaves. To this scope, let $\vec{\xi}_1, \vec{\xi}_2$ be $\Projection_Y$-basic vector fields. By Proposition \ref{lemma1-proposition-vectorfieldY-on-N} these vectors are foliated, and in particular for every vector field $\vec{X}$ tangent to the leaves, $[\vec{X},\vec{\xi}_i]$ is tangent to the leaves as well, $i=1,2$. Since clearly $\vec{\xi}_1, \vec{\xi}_2\in H$, we can compute
\begin{eqnarray*}
\mathcal{L}_{\vec{X}}\mathrm{g}_\newN(\vec{\xi}_1,\vec{\xi}_2)&=& \vec{X}\cdot\mathrm{g}_\newN(\vec{\xi}_1,\vec{\xi}_2)- \mathrm{g}_\newN\left([\vec{\xi}_1,\vec{X}],\vec{\xi}_2\right) - \mathrm{g}_\newN\left(\vec{\xi}_1,[\vec{\xi}_2,\vec{X}]\right)\\
&=& \vec{X}\cdot\mathrm{g}\left({\Projection_Y}_*\vec{\xi}_1,{\Projection_Y}_*\vec{\xi}_2\right)-0-0=0.
\end{eqnarray*}
Since $\FNo$ is a SRF with respect to some metric (e.g., the metric $\tilde{g}$ constructed before), by \cite[Proposition 2.14]{Alex6} $\FNo$ is a SRF with respect to $\mathrm{g}_{\newN}$.
\end{proof}


\begin{remark}
The metric $\mathrm{g}_N$ and the metric  $\tilde{\mathrm{g}}$  on $\newN$ are used in Section  \ref{section proof-smoothflow} and  Section \ref{sec-proof-molino},  respectively.
\end{remark}

 Notice that the metric $\mathrm{g}_{\newN}$ does not preserve the transverse metric of $\tilde{\mathrm{g}}$. In particular, an isometry $\phi:(\newN,\tilde{\mathrm{g}})/\FNo\to (\newN,\tilde{\mathrm{g}})/\FNo$ will not be an isometry of $(\newN,\mathrm{g}_{\newN})/\FNo$. Nevertheless, we still have the following result.

\begin{proposition}\label{proposition-property-gN}
Let $\phi:(\newN,\tilde{\mathrm{g}})/\FNo\to (\newN,\tilde{\mathrm{g}})/\FNo$ be an isometry preserving $Y^*$. Then $\phi$ preserves the fibers of $\Projection_{Y^*}$, and
\[
\phi\big|_{\newN_{p_1}/\FNo}: (\newN_{p_1},\mathrm{g}_{\newN})/\FNo\longrightarrow (\newN_{p_2},\mathrm{g}_{\newN})/\FNo
\]
is still an isometry.
\end{proposition}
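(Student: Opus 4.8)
The plan is to exploit the explicit structure of the two metrics $\tilde{\mathrm{g}}$ and $\mathrm{g}_{\newN}$ on $\newN$, both of which are built from the same underlying slice distribution $\Sl$ but differ only in the horizontal directions. First I would observe that since $\phi$ is an isometry of $(\newN,\tilde{\mathrm{g}})/\FNo$ preserving $Y^*$, and since by equation \eqref{eq-comutativity-submersions} the submetry $\Projection_{Y^*}:\newN/\FNo\to Y^*$ can be characterized purely in terms of the transverse metric (the fiber $\newN_p/\FNo$ over $p$ is the set of points at a given distance configuration from $Y^*$, or more precisely $\Projection_{Y^*}^{-1}(p)$ is intrinsically determined since $Y^*$ is totally geodesic being a single leaf of each fiber), $\phi$ must carry fibers to fibers: $\phi(\newN_{p_1}/\FNo)=\newN_{p_2}/\FNo$ for some $p_2=\bar\phi(p_1)$, where $\bar\phi$ is the restriction of $\phi$ to $Y^*$. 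This uses nothing beyond the fact that a distance-preserving map preserving a totally geodesic submanifold preserves the orthogonal decomposition given by the metric projection.

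The heart of the matter is then to compare the restricted metrics. Here I would use Proposition \ref{lemma-submersion-N}(b): each fiber $\newN_p$ is \emph{flat} with respect to $\mathrm{g}_{\newN}$, and by the construction in that proof, the metric $\mathrm{g}_{\newN}$ restricted to a fiber $\newN_p$ is exactly $\mathrm{g}_0$, which is pulled back via $\dd(\exp_p)$ from the Euclidean metric on the normal slice $T_pS_p$. On the other hand, $\tilde{\mathrm{g}}$ restricted to $\newN_p$ is also obtained from the slice distribution $\Sl$ but using the construction of Proposition \ref{lemma-same-transverse-metricN}, which preserves the transverse metric of $\F$. The key point is that on an \emph{infinitesimal} foliation (the slice foliation), the transverse metric and the flat metric induce the \emph{same} quotient metric up to the normalization fixed by the radial coordinate — more precisely, both $\tilde{\mathrm{g}}|_{\newN_p}$ and $\mathrm{g}_{\newN}|_{\newN_p}$ are metrics on the slice $S_p$ for which the foliation $\FNo|_{\newN_p}$ is Riemannian and which agree in the radial direction (geodesics from $p$ perpendicular to the leaf of $L_q\cap\newN_p$ through $p$), and the quotient $\newN_p/\FNo$ inherits a metric determined by the transverse data. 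Thus, although $\tilde{\mathrm{g}}$ and $\mathrm{g}_{\newN}$ genuinely differ globally on $\newN$, their quotient metrics on each fiber $\newN_p/\FNo$ coincide, or at least are related by a canonical isometry independent of $p$; combined with the fact that $\phi$ is a $\tilde{\mathrm{g}}$-quotient isometry between fibers, this forces $\phi|_{\newN_{p_1}/\FNo}$ to be a $\mathrm{g}_{\newN}$-quotient isometry as well.

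Concretely, I would carry out the steps in this order: (i) show $\phi$ preserves the fibers of $\Projection_{Y^*}$, as above; (ii) identify $\tilde{\mathrm{g}}|_{\newN_p}$ and $\mathrm{g}_{\newN}|_{\newN_p}$ explicitly from the two construction proofs, noting both are of the form $(\Pi|_{\Sl}^{-1})^*(\text{something on }\Sl)$ and that on the fiber — where the $H$-direction modification of $\mathrm{g}_{\newN}$ is irrelevant since the fiber is tangent to $\Sl$ — the two metrics differ only by the choice of metric on $\Sl$, namely the transverse metric of $\F$ versus the Euclidean slice metric; (iii) argue that on the level of the quotient by $\FNo|_{\newN_p}$ these two give the same length structure, because the quotient metric on a foliated space only depends on the transverse metric, and the transverse metrics of $\tilde{\mathrm{g}}$ and $\mathrm{g}_{\newN}$ on the slice agree (both equal to the Euclidean transverse metric of the infinitesimal foliation, which is the point of the slice construction); (iv) conclude that since $\phi$ preserves the $\tilde{\mathrm{g}}$-quotient distance between fibers and these quotient distances coincide with the $\mathrm{g}_{\newN}$-quotient distances, $\phi|_{\newN_{p_1}/\FNo}$ is a $\mathrm{g}_{\newN}/\FNo$-isometry.

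The main obstacle I anticipate is step (iii): making precise the claim that the quotient metric on $\newN_p/\FNo$ is the same for $\tilde{\mathrm{g}}$ and $\mathrm{g}_{\newN}$. One must be careful because the two metrics do not literally agree on $\newN_p$ — they agree on the transverse (horizontal-to-the-leaves) directions but may differ along the leaves, and a priori one should check that this discrepancy does not affect the quotient. The clean way around this is to invoke that for a Riemannian foliation the quotient distance is computed by infima of lengths of \emph{horizontal} curves, and horizontal lengths depend only on the transverse metric; since $\tilde{\mathrm{g}}$ and $\mathrm{g}_{\newN}$ restricted to $\newN_p$ induce the same transverse metric on the slice foliation (by the respective construction proofs, both being the Euclidean transverse metric), the quotient metrics coincide. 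Verifying that both constructions really do yield the \emph{same} transverse metric on $\newN_p$ — and not merely transverse metrics that happen to be pointwise isometric — is the one computation that genuinely needs to be done, and it follows by tracing through Proposition \ref{lemma-same-transverse-metricN} and Proposition \ref{lemma-submersion-N} and using that the original metric $\mathrm{g}$'s normal exponential from $Y$ is a radial isometry on the slice.
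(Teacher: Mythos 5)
Your step (i) is exactly the paper's argument: the fibers of $\Projection_{Y^*}$ are defined by a purely metric condition (closest-point projection onto $Y^*$), so a quotient isometry preserving $Y^*$ preserves them. The problem is step (iii), which is the heart of your proof and which is false as stated. The metric $\tilde{\mathrm{g}}$ is built so that the quotient distance on $\newN_p/\FNo$ reproduces the distance between the plaques of $\F$ in $(M,\mathrm{g})$, and this transverse geometry is \emph{not} the flat transverse geometry of the infinitesimal foliation: it agrees with it only to first order at the base point $p$. A minimal test case is the trivial foliation by points on a non-flat $M$: then $(\newN_p,\tilde{\mathrm{g}})/\FNo$ is essentially the slice with (a modification of) the induced curved metric, while $(\newN_p,\mathrm{g}_{\newN})/\FNo$ is flat by construction of $\mathrm{g}_0$ via $\dd(\exp_p)$. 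So the two quotient metrics on a fiber do not coincide, and there is no canonical isometry between them independent of $p$; your fallback phrasing does not rescue the argument. If the metrics did coincide, the proposition would be a triviality and the paper would not need a proof at all.

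What the paper actually does is a rescaling (blow-up) argument that replaces your step (iii): for $\lambda\in(0,1)$ the homothetic transformation $h_{\lambda}:\newN_p\to\newN_p$, $\exp_p v\mapsto \exp_p\lambda v$, is foliated, and one sets $\tilde{\mathrm{g}}_{\lambda}:=\lambda^{-2}h_{\lambda}^*\tilde{\mathrm{g}}$, which still makes $\FNo$ a SRF on the fiber. Because the homothety is metrically defined in the quotient (radial rescaling toward $Y^*$, which $\phi$ preserves), the map $\phi\big|_{\newN_{p_1}/\FNo}$ remains an isometry for every $\tilde{\mathrm{g}}_{\lambda}$-quotient metric. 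Finally, the restrictions of $\tilde{\mathrm{g}}_{\lambda}$ to the fibers converge smoothly as $\lambda\to 0$ to $\mathrm{g}_{\newN}$ (this is exactly the sense in which the flat metric is the infinitesimal limit of the transverse geometry), so passing to the limit shows $\phi\big|_{\newN_{p_1}/\FNo}$ is a $\mathrm{g}_{\newN}$-quotient isometry; the paper refers to the analogous argument in \cite[Theorem 1.2]{AlexRadeschi}. In short: the flat quotient is not equal to the $\tilde{\mathrm{g}}$-quotient, it is its canonical rescaling limit, and the isometry survives the limit because it commutes with the rescaling — this limit step is the missing idea in your proposal.
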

\begin{proof}
The metric projection $\Projection_{Y^*}$ sends a point $q^*$ to the point $p^*\in Y^*$ which is closest to $q^*$. This is a metric condition, and since $\phi$ preserves the metric, in particular it preserves the fibers of $\Projection_{Y^*}$.

Given $\lambda\in (0,1)$ the homothetic transformation $h_{\lambda}:\newN_p\to \newN_p$, $\exp_pv\longmapsto \exp_p\lambda v$ is a foliated map (cf. \cite{Molino}) and one can define $\tilde{\mathrm{g}}_{\lambda}:=\frac{1}{ \lambda^2}h_{\lambda}^*\tilde{\mathrm{g}}$ such that $(\newN_{p},\tilde{\mathrm{g}}_{\lambda},\FN)$ is still a singular Riemannian foliation. Moreover, since 
\[
\phi\big|_{\newN_{p_1}/\FNo}: (\newN_{p_1},\tilde{\mathrm{g}})/\FNo\longrightarrow (\newN_{p_2},\tilde{\mathrm{g}})/\FNo
\]
is an isometry, it will still be an isometry with respect to $\tilde{\mathrm{g}}_{\lambda}$. Since the restrictions of $\tilde{\mathrm{g}}_{\lambda}$ to the fibers of $\Projection_Y$ converge smoothly to the metric $\mathrm{g}_\newN$, the proposition is proved. See a similar argument in \cite[Theorem 1.2]{AlexRadeschi}.
\end{proof}

\begin{remark}\label{remark-property-gN}
Suppose that the leaves in $\satsub$ meet $Y$ only once, for example in the proof of Theorem \ref{main-theorem}. In this case the isometric action of the singular holonomy pseudogroup (cf. Definition \ref{singular-holonomy-pseudogroup}) $\H$ on $\newN/\FNo$ (as in Section \ref{subsection-disconnected-leaves}) preserves the fibers $\newN_p/\FNo$. Moreover given an isometry $\phi:(\newN,\tilde{\mathrm{g}})/\FN\to (\newN,\tilde{\mathrm{g}})/\FN$, Proposition \ref{proposition-property-gN} can be reproved after replacing $\FNo$ by $\FN$. In particular, $\phi$ induces $\mathrm{g}_{\newN}$-isometries
\begin{equation}
\phi\big|_{\newN_{p_1}/\FN}: (\newN_{p_1},\mathrm{g}_{\newN})/\FN\longrightarrow (\newN_{p_2},\mathrm{g}_{\newN})/\FN.
\end{equation}
whenever $\phi(p_1)=p_2$.
\end{remark}
%





\bibliographystyle{amsplain}

\end{document}